\newcommand{\Rr}{{\mathbb{R}}}
\newcommand{\D}[1]{\mbox{\rm #1}}
\newcommand{\dd}{\D{d}}
\newtheorem{theorem}{Theorem}[section]
\newtheorem{corollary}[theorem]{Corollary}
\newtheorem{rem}[theorem]{\sc Remark}
    \newenvironment{remark}{\begin{rem} \begin{rm}}{\end{rm} \qed\end{rem}}
\newtheorem{lemma}[theorem]{Lemma}
 \newtheorem{proposition}[theorem]{Proposition}
\newtheorem{definition}[theorem]{Definition}
\theoremstyle{definition}
\numberwithin{equation}{section}
\begin{document}
\thispagestyle{empty}
\setcounter{page}{1}



\begin{center}
{\large\bf \uppercase{On the Cauchy problem for a general fractional porous medium equation with variable density}}

\vskip.20in

Fabio Punzo \\[2mm]
{\footnotesize
Dipartimento di Matematica "G. Castelnuovo", \\
Universit\`{a} di Roma La Sapienza,\\
Piazzale Aldo Moro 5, 00185 Roma }\\[2mm]
Gabriele Terrone\footnote{
Gabriele Terrone was supported by the UTAustin-Portugal partnership through the FCT post-doctoral fellowship
SFRH/BPD/40338/2007, CAMGSD-LARSys through FCT Program POCTI -
FEDER and by grants PTDC/MAT/114397/2009,
UTAustin/MAT/0057/2008, and UTA-CMU/MAT/0007/2009.
} \\[2mm]
{\footnotesize
Center for Mathematical Analysis, Geometry, and Dynamical Systems,\\
Departamento de Matem\'{a}tica, \\
Instituto Superior T\'{e}cnico, 1049-001 Lisboa, Portugal}
\end{center}

{\footnotesize  
\noindent
{\bf Abstract.}
We study the well-posedness of the Cauchy problem for a fractional porous medium equation with a varying density $\rho>0$. We establish existence of weak energy solutions; uniqueness and nonuniqueness is studied as well, according with the behavior of $\rho$ at infinity.  \\[3pt]
{\bf Keywords.} Fractional Laplacian, Porous medium equation, Cauchy problem, 
Variable density, Smoothing effect. \\[3pt]
{\small\bf AMS subject classification:} 35A01, 35A02, 35E15, 35K55, 35R11.
}

\vskip.2in


\section{Introduction}
In this paper we study the following nonlinear nonlocal Cauchy problem:
\begin{equation}
\label{06111}
    \begin{cases}
   \rho\, \partial_t u + (- \Delta)^{\frac{\sigma}{2}}\left[ u^m\right] = 0  & x\in \Rr^N, \quad t>0\\
     u=u_0 & x\in \Rr^N, \quad t=0.
    \end{cases}
\end{equation}
The nonlocal operator $(- \Delta)^{\frac{\sigma}{2}}$ is the fractional Laplacian of order $\sigma/2$;  see for instance 
\cite{CafS} 
for a comprehensive account on the subject. The parameter $\sigma$ is supposed to vary in the open interval $(0,2)$, thus a representation for such operator in terms of a singular integral holds.  The function $\rho(x)$ is a density; it is assumed to be positive and to depend continuously on the spatial variable $x$. The initial value $u_0$ is a bounded function belonging to the weighted space $L^1_{\rho}(\Rr^N)$ of measurable functions $f$ satisfying
$\int_{\Rr^N} f\,\rho\,  \dd x <\infty$. Finally, $N\geq 1 $ and $m$ is a real parameter greater or equal to $1$.
The aim of this paper is to investigate existence and uniqueness of solutions to problem \eqref{06111}.

\bigskip
By replacing the nonlocal operator in \eqref{06111} with the classical Laplace operator $\Delta$ we obtain the initial value problem for the porous medium equation with variable density:
\begin{equation}
\label{16011}
    \begin{cases}
    \displaystyle\rho\, \partial_t u - \Delta u^m  = 0  & x\in \Rr^N, \, t>0\\
     u=u_0 & x\in \Rr^N, \, t=0.
    \end{cases}
\end{equation}
Problem \eqref{16011} have been extensively studied in the literature; see \cite{Eid90}, \cite{EK},  \cite{GMP}, \cite{GHP}, \cite{KRV}, \cite{KKT}, \cite{P1}, and also  \cite{P2}-\cite{P4} where similar problems on Riemannian manifolds have been taken into account. The picture for problem \eqref{16011} has been completed in \cite{RV1}-\cite{RV2} where  existence and uniqueness of solutions to this problem have been established in the class of finite energy solutions assuming the initial data $u_0$ nonnegative and in $L^1_\rho$. 
Uniqueness of solutions to \eqref{16011} is a delicate issue and is strictly related with the behavior at infinity of the density $\rho$. More precisely, if $N=1$ or $N=2$, then uniqueness of solutions holds if $\rho$ merely belongs to $L^\infty$ (see \cite{GHP}). If instead $N\geq 3$ an additional requirement on $\rho$ must be satisfied in order to get uniqueness, namely that $\rho(x)$ vanishes {\em slowly}  as $|x|$ diverges, whereas nonuniqueness phenomena arise if the opposite behavior is satisfied by $\rho$ (see \cite{EK}, \cite{GMP}, \cite{KKT},  \cite{P1}-\cite{P4}, \cite{RV1}-\cite{RV2}). 

\bigskip
If $\rho\equiv 1$ in \eqref{06111}, we get the following nonlocal version of the initial value problem for the porous medium equation:
\begin{equation}
\label{16012}
    \begin{cases}
    \partial_t u + (- \Delta)^{\frac{\sigma}{2}}\left[ u^m\right] = 0  & x\in \Rr^N, \, t>0\\
     u=u_0 & x\in \Rr^N, \, t=0.
    \end{cases}
\end{equation}
This problem has been studied very recently in \cite{V2} where existence,
uniqueness and properties of weak solutions to \eqref{16012} have
been established assuming $u_0\in L^1(\Rr^N)$; the particular case $\sigma=1$
has been addressed in \cite{V1}.

\bigskip
The study of problem \eqref{06111} makes perfect sense, as it can be regarded both as a generalization of problem \eqref{16012} and as nonlocal version of problem \eqref{16011}. Moreover, problem \eqref{06111} arises in many physical situations (see, $e.g.$ \cite{AC}, \cite{Ja} , \cite{JKO}) such as diffusions in inhomogeneous media and is particularly interesting from a probabilistic point of view since, as  it is well known, the fractional Laplacian is the infinitesimal generator of a L\'evy process (see \cite{Bert}). Nonetheless, to the best of our knowledge, the analysis of problems like \eqref{06111} is relatively new in the literature. Some results for nonlocal linear parabolic equation with a variable density have been established in \cite{Chas}, but not for problem considered in this paper. Recently, in \cite{PT1}, it has been studied the special case $N=\sigma=1$, that is 
\begin{equation}
\label{16014}
    \begin{cases}
    \displaystyle\rho\, \partial_t u + \left(-\frac{\partial^2}{\partial x^2}\right)^{\frac{1}{2}}\left[ u^m\right] = 0  & x\in \Rr, \, t>0\\
     u=u_0 & x\in \Rr, \, t=0.
    \end{cases}
\end{equation}
In the light of results in \cite{EK}, \cite{GMP} \cite{KKT}, and \cite{P1}, bounded initial data have been considered in \cite{PT1}; existence and uniqueness of {\em very weak solutions} to problem \eqref{16014} (namely solutions not having finite energy in the whole $\Rr^N$) have been proved in the class of bounded solutions not satisfying any extra conditions at infinity. 

We point out that the arguments used in \cite{PT1} are completely different from those in the present paper.
In fact, as well as in \cite{KRV}, \cite{RV1}-\cite{RV2}, \cite{V1}-\cite{V2},
we deal here with {\it weak energy solutions} to problem \eqref{06111} (see Definition \ref{defsol}), and consider nonnegative bounded initial data $u_0$ belonging to $L^1_{\rho}(\Rr^N)$. We emphasize also that  our results differ from those in \cite{V1}-\cite{V2}, where $\rho$ is constant.

\bigskip
We outline next the structure and main contributions of this paper. In Section \ref{sec:aa} after recalling the mathematical background about the fractional Laplacian, such as its realization through the harmonic extension, we give the precise notion of solution we will considered. In Section \ref{existence} we prove existence of weak energy solutions. The presence of the varying density $\rho$ does not bring any additional technical difficulty at this stage of the work, and the proof of the main result of this Section, Theorem \ref{02011}, goes along the same lines as the proof of the existence results in \cite{V1}, \cite{V2} and \cite{PT1}; however we will sketch it for seek of completeness and for further references. 

In Sections \ref{slow} and \ref{fast} we deal with uniqueness and nonuniqueness of solutions. Concerning these issues, as expected, problem \eqref{06111} turns out to share many aspects with its local counterpart, problem \eqref{16011}.  

First, in Theorem \ref{tuni} we establish uniqueness under the additional requirement that $\rho(x)$ vanishes {\em slowly} as $|x|$ diverges. As a byproduct, we show that total mass is conserved along the evolution; see Proposition \ref{propcm}.

The opposite situation in which $\rho(x)$ dacays {\em fast} as $|x|\to \infty$ is studied in Section \ref{fast}. We first prove in Theorem \ref{texi2} that in this case there exist solutions to \eqref{06111} satisfying an extra condition at infinity (see \eqref{ea21}); the proof of this results makes use of a Theorem shown in \cite{Rubin} and requires $N\geq 2$. As a consequence, in Corollaries \ref{cornu1} and \ref{cornu2} we easily obtain nonuniqueness of bounded solutions, if we do not specify their behavior at infinity. Instead, we shall prove that uniqueness is restored in the class of solutions satisfying a proper decay condition at infinity (see Theorem \ref{tunc}).  

Finally, in Section \ref{half} we study the particular situation in which $\sigma = 1$ in \eqref{06111}, that is:
\begin{equation}
\label{06111a}
    \begin{cases}
   \rho\, \partial_t u + (- \Delta)^{\frac{1}{2}}\left[ u^m\right] = 0  & x\in \Rr^N, \quad t>0\\
     u=u_0 & x\in \Rr^N, \quad t=0.
    \end{cases}
\end{equation}
In this case, it is possible to get rid of the boundedness assumption on the initial data and to generalize to the case $u_0\in L^1_{\rho}(\Rr^N)$ existence and uniqueness results previously discussed for $u_0\in L^1_{\rho}(\Rr^N)\cap L^\infty(\Rr^N)$. A key tool, for this scope, is a smoothing estimate (see Theorem \ref{tse}), which holds true under the requirement $\rho\in L^\infty(\Rr^N)$, that we prove by slightly adapting an argument of \cite{V1}. 
\bigskip
Let us mention that, to the best of our knowledge, our results are new also in the linear case ($m=1$), that is for problem 
\[
    \begin{cases}
   \rho\, \partial_t u + (- \Delta)^{\frac{\sigma}{2}}\left[ u\right] = 0  & x\in \Rr^N, \quad t>0\\
     u=u_0 & x\in \Rr^N, \quad t=0.
    \end{cases}
\]
In this case, losely speaking, uniqueness of solutions  corresponds to the fact that the L\'evy process associated to the operator $\frac 1{\rho} (- \Delta)^{\frac{\sigma}{2}}$, starting from any point in $\Rr^N$, does not attain {\it infinity}. On the contrary, a solution satisfying additional conditions at infinity exists when the L\'evy process exits arbitrarily large balls.


\section{Mathematical background}
\label{sec:aa}

The fractional Laplacian $(-\Delta)^{\sigma/2}$ is a nonlocal partial differential operator; it can be defined in many different ways, one of which relies on the Fourier transform. For any $g$ in the class of Schwartz functions, if $(-\Delta)^{\sigma/2} g = h$ then

\begin{equation}
\label{02015}
\hat{h}(\xi)= |\xi|^{\sigma}\hat{g}(\xi).
\end{equation}
If we require $\sigma$ to vary in the open interval $(0,2)$ we can use the representation

\begin{equation}
\label{ea1}
(-\Delta)^{\sigma/2} g(x)=C_{N,\sigma}\,\, \textrm{P.V.}\, \int_{\Rr^N} \frac{g(x)-g(z)}{|x-z|^{N+\sigma}}\dd z,
\end{equation}
where $C_{N,\sigma}=\frac{2^{\sigma-1}\sigma \Gamma((N+\sigma)/2)}{\pi^{N/2}\Gamma(1-\sigma/2)}
$ is an appropriate positive normalization constant depending on $N$ and $\sigma$.

\smallskip

In the following Sections \ref{existence}, \ref{slow} and \ref{fast} we will assume:
\begin{equation}
\label{A0} \tag{{\bf A}$_0$}
\begin{cases}
\text{(i)} &\rho\in C(\Rr^N), \,\rho>0 \text{ in } \Rr^N,\\
\text{(ii)}& u_0\in L^{\infty}(\Rr^N) \cap L^+_\rho(\Rr^N),\\
\text{(iii)} &m\ge 1,\\
\text{(iv)} &0<\sigma <2.
\end{cases}
\end{equation}
Here
\begin{gather*}
L^1_\rho(\Rr^N):= \left\{ f \text{ measurable in }\Rr^N\, \Big{|}\, \|f\|_{L^1_{\rho}(\Rr^N)}:=\int_{\Rr^N} f \, \rho\, \dd x <\infty\right\},\\
L^+_\rho(\Rr^N):= \left\{ f\in L^1_\rho(\Rr^N)\, \Big{|}\, f\geq 0\right\}.\\
\end{gather*}
In the final Section \ref{half} we will modify Assumption \eqref{A0} by requiring $u_0\in L^+_\rho(\Rr^N)$ and $\rho \in L^\infty(\Rr^N)$.  

Observe that in view of Assumption \eqref{A0}--(i), the measure $\rho(x) \dd x$ is absolutely continuous with respect the Lebesgue measure in $\Rr^N$ and $\displaystyle\lim_{q\to\infty}\|f\|_{L^q_\rho}=\|f\|_{\infty}$; here $\|f\|_{L^q_{\rho}}$ is the weighted norm $\left(\int_{\Rr^N} |f|^q \, \rho\, \dd x\right)^{1/q}$ while $\|\cdot\|_{\infty}$ is the usual norm in $L^\infty(\Rr^N)$. Notice also that Assumption \eqref{A0}--(i) implies $L^1(\mathcal O) = L^1_\rho (\mathcal O)$ for any bounded domain $\mathcal O \subset \Rr^N$.

\smallskip

Multiplying the nonlocal partial differential equation in \eqref{06111} by a test function $\psi$ compactly supported in $\Rr^N\times (0,T)$, $T>0$, integrating by parts, taking into account \eqref{02015} and using the Plancherel's Theorem, we discover that
\begin{equation}
\label{02016}
\int_0^T \int_{\Rr^N} \rho\, u\, \partial_t \psi \dd x\, \dd t - \int_0^T \int_{\Rr^N}(-\Delta)^{\sigma/4}(u^m)\,
(-\Delta)^{\sigma/4}\psi \, \dd x\, \dd t = 0.
\end{equation}

The integrals above make sense if the function $u^m$ belongs to an appropriate space, namely the fractional Sobolev space $\dot{H}^{\sigma/2}(\Rr^N)$, which is the completion of $C^\infty_0(\Rr^N)$ with the norm
$\|\psi\|_{\dot{H}^{\sigma/2}}= \|(-\Delta)^{\sigma/4} \psi\|_{L^2(\Rr^N)}$.

\begin{definition}\label{defsol}
A \emph{solution} to problem \eqref{06111} is a function $u\geq 0$ such that:
\begin{itemize}
\item{} $u\in C([0,\infty); L^1_{\rho}(\Rr^N))$ and $u^m \in L^2_\text{loc}((0, \infty): \dot{H}^{\sigma/2}(\Rr^N))$;
\item for any $T>0$, $\psi\in C^{1}_0(\Rr^N \times (0, T))$ identity \eqref{02016} holds;
\item $u(\cdot, 0)=u_0$ almost everywhere.
\end{itemize}
\end{definition}
In accordance with the terminology of \cite{V2}, such solutions can be called $L^1_{\rho}$ {\it weak energy solutions}. 

\bigskip

If $\varphi$ is a smooth and
bounded function defined in $\Rr^N$, we can consider its $\sigma$--harmonic
extension $v=\D{E}(\varphi)$ to the upper half-space
\[
\Omega:= \Rr^{N+1}_+ = \{(x,y): \, x\in \Rr^N, \, y>0 \},
\]
that is, the unique smooth and bounded solution $v(x,y)$ of the problem
\[
  \begin{cases}
    \nabla \left(y^{1-\sigma}\nabla v\right)=0 &\text{in }\Omega\\
    v(x,0)=\varphi(x) & \text{in }\Gamma.
    \end{cases}
\]
Here $\Gamma:=\overline \Omega \cap \{y=0\}\equiv \Rr^N$. It has been proved (see 
\cite{CafS},  \cite{V2}) that 
\[
-\mu_\sigma \lim_{y\to 0^+} y^{1-\sigma}\frac{\partial v}{\partial y}=\left(-\Delta \right)^{\frac \sigma 2} \varphi(x)\quad 
\textrm{for all } x\in \Gamma,
\]
where $\mu_\sigma:=\frac{2^{\sigma-1}\Gamma(\sigma/2)}{\Gamma(1-\sigma/2)}$. We then define the operators
\begin{gather*}
L_\sigma v := \nabla \left(y^{1-\sigma}\nabla v\right)\\
\frac{\partial v}{\partial y^\sigma}:= \mu_\sigma \lim_{y\to 0^+} y^{1-\sigma}\frac{\partial v}{\partial y}.
\end{gather*}

Solving problem \eqref{06111} is equivalent to solving the following  quasi-stationary problem for $w=\D{E}(u^m)$, with a dynamical boundary conditions:
\begin{equation}
\label{06112}
    \begin{cases}
    L_\sigma w = 0 & (x,y)\in \Omega, \, t >0\\
    \displaystyle \frac{\partial w}{\partial y^\sigma}= \rho\, \frac{\partial\left[ w^{\frac{1}{m}}\right]}{\partial t} & x\in \Gamma,\, t>0\\
     w=u_0^{m} & x\in \Gamma, \, t=0.
    \end{cases}
\end{equation}

We introduce next weak energy solutions of problem \eqref{06112}. Formally, multiplying the differential equation in \eqref{06112} by a test function $\psi$ compactly supported in $\bar\Omega\times (0, T]$, integrating by parts and taking into account  initial condition and the dynamical boundary condition  we get:

\begin{equation}
\frac{1}{\mu_\sigma}\int_0^T \int_{ \Gamma} \rho\, u\, \partial_t \psi \,\dd x\,  \dd t
= \int_0^T \int_{\Omega} \nabla\psi \cdot \left(y^{1-\sigma}\nabla w\right) \,\dd x\, \dd y\, \dd t\,. \label{291101}
\end{equation}

We denote by $X^\sigma(\Omega)$ the completion of $C^\infty_0(\Omega)$ with the norm
\[
\|v\|_{X^\sigma} = \left(\mu_\sigma \int_{\Omega} y^{1-\sigma}|\nabla v|^2\right)^\frac{1}{2}.
\]
Given a function $f\in W^{1,2}(\Omega)$ we denote by $f |_{\Gamma}$ its trace on $\Gamma$, which is in $L^2(\Gamma)$.
\begin{definition}
\label{06114}
A \emph{solution} to problem \eqref{06112} is a
pair of functions $(u,w)$ with $u\geq 0$, $w \geq 0$, such that
\begin{itemize}
\item{}   $u \in C\big([0, \infty); L^{1}_{\rho} (\Gamma)\big)$;
\item{} $ w\in L^2_{loc}\big((0,\infty); X^\sigma(\Omega)\big)$;
\item{} $w|_{\Gamma\times (0,\infty)}=u^m$;
\item{} for any $T>0, \psi\in C^1_0(\bar\Omega \times (0, T))$ there holds
 \begin{equation}
    \label{06115}
  \int_0^T \int_{ \Gamma} \rho\, u\, \partial_t \psi \,\dd x\,  \dd t =      {\mu_\sigma}  \int_0^T \int_{\Omega} \nabla\psi \cdot \left(y^{1-\sigma}\nabla w\right) \,\dd x\, \dd y\, \dd t;
    \end{equation}
\item{} the identity $u(\cdot, 0)=u_0$ holds almost everywhere.
\end{itemize}
\end{definition}

The following result establishes the equivalence between the two notions of solutions given in Definition \ref{defsol} and Definition \ref{06114}; it can be proved as in \cite[Section 3.3]{V2}.
\begin{proposition}
\label{02012}
A function $u$ is a solution to problem \eqref{06111} if and only if $(u, \D{E}(u^m))$ is a solution to problem \eqref{06112}.
\end{proposition}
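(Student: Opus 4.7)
The plan is to build on the Caffarelli--Silvestre representation of the fractional Laplacian through the $\sigma$--harmonic extension. The cornerstone of the argument is the bilinear identity
\begin{equation*}
\mu_\sigma \int_\Omega y^{1-\sigma}\, \nabla \D{E}(\varphi)\cdot \nabla \Psi \,\dd x\, \dd y \;=\; \int_{\Rr^N} (-\Delta)^{\sigma/4}\varphi \,\cdot\,(-\Delta)^{\sigma/4}\bigl(\Psi|_\Gamma\bigr)\, \dd x,
\end{equation*}
valid for every $\varphi \in \dot H^{\sigma/2}(\Rr^N)$ and every $\Psi \in X^\sigma(\Omega)$. I would derive it by first polarizing the energy isometry $\|\D{E}(\varphi)\|_{X^\sigma}=\|(-\Delta)^{\sigma/4}\varphi\|_{L^2}$, then observing that, since $L_\sigma \D{E}(\varphi)=0$ in $\Omega$, any competing extension $\Psi$ of $\Psi|_\Gamma$ differs from $\D{E}(\Psi|_\Gamma)$ by a function with zero trace, whose contribution to the left-hand side vanishes after integrating by parts in $y$.

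For the forward direction, assume $u$ solves \eqref{06111} in the sense of Definition \ref{defsol} and set $w(\cdot,\cdot,t):=\D{E}(u^m(\cdot,t))$. The regularity $w\in L^2_\text{loc}\bigl((0,\infty); X^\sigma(\Omega)\bigr)$ is immediate from the extension isometry applied to $u^m\in L^2_\text{loc}\bigl((0,\infty); \dot H^{\sigma/2}(\Rr^N)\bigr)$, while $w|_{\Gamma\times(0,\infty)}=u^m$ holds by construction. For any test function $\psi\in C^1_0(\bar\Omega\times(0,T))$, the restriction $\psi_\Gamma:=\psi|_\Gamma$ belongs to $C^1_0(\Rr^N\times(0,T))$ and is therefore admissible in \eqref{02016}; inserting $\psi_\Gamma$ into \eqref{02016} and rewriting its second term through the cornerstone identity above produces exactly \eqref{06115}.

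For the reverse direction, the first task is to identify $w$ with the harmonic extension of $u^m$. Testing \eqref{06115} against functions of the product form $\eta(x,y)\chi(t)$, with $\eta\in C^1_0(\Omega)$ vanishing on $\Gamma$ and $\chi\in C^1_0((0,T))$, kills the left-hand side, so the fundamental lemma of calculus of variations gives $L_\sigma w(\cdot,\cdot,t)=0$ distributionally in $\Omega$ for almost every $t$. Uniqueness of the bounded $\sigma$--harmonic extension together with the trace condition $w|_\Gamma=u^m$ then forces $w(\cdot,\cdot,t)=\D{E}(u^m(\cdot,t))$ and, in particular, $u^m\in L^2_\text{loc}\bigl((0,\infty);\dot H^{\sigma/2}(\Rr^N)\bigr)$ via the isometry. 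Finally, for a generic $\psi_\Gamma\in C^1_0(\Rr^N\times(0,T))$ one picks any compactly supported smooth extension $\psi\in C^1_0(\bar\Omega\times(0,T))$, applies \eqref{06115}, and converts the right-hand side back to the fractional form with the cornerstone identity, recovering \eqref{02016}. The only genuine obstacle is the careful handling of traces and of the energy isometry for merely finite-energy (rather than smooth) functions; once the Caffarelli--Silvestre machinery is in place, both implications reduce to bookkeeping.
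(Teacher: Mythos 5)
Your proposal is correct and follows essentially the same route the paper relies on: the paper delegates this equivalence to \cite[Section 3.3]{V2}, where the argument is exactly the Caffarelli--Silvestre extension machinery you describe --- the polarized energy isometry between $\dot H^{\sigma/2}(\Rr^N)$ and $X^\sigma(\Omega)$, the vanishing of the cross term against zero-trace perturbations of the harmonic extension, and the trace/extension of test functions to pass between \eqref{02016} and \eqref{06115}. The only cosmetic point is that in the reverse direction the identification $w=\D{E}(u^m)$ is more naturally obtained from uniqueness of the finite-energy (variational) extension in $X^\sigma(\Omega)$ rather than of the bounded one, as you yourself anticipate in your closing remark.
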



\section{Existence of solutions}
\label{existence}
The aim of this Section is to establish the following

\begin{theorem}
\label{02011}
Let assumption \eqref{A0} be satisfied. Then there exists a solution $(u,w)$ to problem \eqref{06112}.  Furthermore, 
\begin{equation}
\label{ea40}
\begin{split}
\|u\|_{L^\infty(\Rr^N\times (0,\infty))}\leq \|u_0\|_{L^\infty(\Rr^N)}, \\
\|w\|_{L^\infty(\Rr^N\times (0,\infty))}\leq \|u_0^m\|_{L^\infty(\Rr^N)}
\end{split}
\end{equation}
and
\begin{equation}
\label{ea40bb} 
\|u(\cdot, t)\|_{L^1_{\rho}}\leq \|u_0\|_{L^1_\rho}\; \;  \textrm{for any } t>0\,. 
\end{equation}
\end{theorem}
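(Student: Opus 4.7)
The plan is to follow the by-now standard strategy for this class of problems, adapted to the presence of the variable density $\rho$: solve a sequence of well-posed approximate problems on bounded domains, extract uniform bounds, and pass to the limit by compactness. I work in the extended formulation \eqref{06112} and transfer the result to \eqref{06111} via Proposition \ref{02012}.

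First, for each $R>0$, I would replace $\Omega$ by the cylinder $C_R := B_R\times(0,R)$ with zero lateral/top Dirichlet conditions, and approximate $u_0$ by $u_{0,R}\in C^\infty_c(B_R)$ with $0\le u_{0,R}\le\|u_0\|_\infty$ and $u_{0,R}\to u_0$ in $L^1_\rho(\Rr^N)$. If necessary, the nonlinearity $s\mapsto s^m$ is regularized to $\Phi_\varepsilon$, smooth and strictly monotone, so that the resulting problem is strictly parabolic; existence of a nonnegative bounded $(u_R,w_R)$ is then classical (Galerkin, nonlinear semigroups, or the monotone framework of \cite{V2}). Crucially, on $\overline{B_R}$ the continuous positive density $\rho$ is bounded away from $0$ and $\infty$, so $L^1_\rho(B_R)=L^1(B_R)$ with equivalent norms and no new technical issue arises at this stage.

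Second, I would derive the a priori bounds. The $L^\infty$ estimate is obtained by comparison against the constant super- and sub-solutions $\|u_0\|_\infty$ and $0$, which via the maximum principle for $L_\sigma$ also yields $\|w_R\|_\infty\le \|u_0^m\|_\infty$. Testing the approximate weak formulation with a regularization of $\operatorname{sgn}_+(u_R-k)$ produces the $L^1_\rho$-contraction, and in particular $\|u_R(\cdot,t)\|_{L^1_\rho}\le\|u_{0,R}\|_{L^1_\rho}$. Testing \eqref{06115} (in approximate form) with $\psi=w_R$ on $[0,t]$ and identifying the boundary contribution with the time derivative of $\frac{m}{m+1}\int\rho\, u_R^{m+1}\,\dd x$ gives the energy estimate
\[
\tfrac{m}{m+1}\!\int_{B_R}\!\rho\,u_R(t)^{m+1}\,\dd x + \mu_\sigma\!\int_0^t\!\!\int_{C_R}\! y^{1-\sigma}|\nabla w_R|^2\,\dd x\,\dd y\,\dd s\le \tfrac{m}{m+1}\!\int_{B_R}\!\rho\,u_{0,R}^{m+1}\,\dd x,
\]
whose right-hand side is controlled uniformly by $\|u_0\|_\infty^{m-1}\|u_0\|_{L^1_\rho}$. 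Consequently $w_R$, extended by zero, is uniformly bounded in $L^2_{\mathrm{loc}}((0,\infty);X^\sigma(\Omega))$.

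Finally, I would pass to the limit $R\to\infty$ (and $\varepsilon\to 0$). Weak compactness in $X^\sigma(\Omega)$ provides a candidate $w$; strong compactness of the traces $u_R$ on $\Gamma\times(0,T)$, needed to identify the nonlinear term $u_R^m=w_R|_\Gamma$, is obtained through an Aubin--Lions argument where $\rho\,\partial_t u_R$ is controlled as a distribution in the dual of $X^\sigma$ directly from the equation, combined with the uniform $L^\infty$ bound. The $L^1_\rho$-contraction provides equicontinuity in $t$ with values in $L^1_\rho$, yielding $u\in C([0,\infty);L^1_\rho(\Rr^N))$ and recovering the initial condition; the bounds \eqref{ea40}--\eqref{ea40bb} then transfer from the approximations by lower semicontinuity. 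The main obstacle I anticipate is tail control as $R\to\infty$: since $\rho$ is only continuous and positive, it may be arbitrarily small at infinity, and strong convergence on compact subsets of $\Rr^N$ alone does not prevent $L^1_\rho$-mass from escaping. The global $L^1_\rho$-contraction must be used decisively to obtain tightness, so that the limit $u$ inherits both the $L^1_\rho$-bound \eqref{ea40bb} and the initial trace in the $L^1_\rho$-sense.
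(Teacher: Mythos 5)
Your proposal is correct in outline and shares the paper's overall architecture (Cauchy--Dirichlet approximations on balls $B_R$, uniform $L^\infty$ and $L^1_\rho$ bounds, passage to the limit $R\to\infty$), but the limit passage is carried out by a genuinely different mechanism. The paper, following \cite{V2} and \cite{PT1}, relies on Proposition \ref{03012}--iii: by comparison, the family $w_R$ is monotone in $R$ and uniformly bounded, so $u_R$ and $w_R$ converge \emph{pointwise everywhere} as $R\to\infty$; the nonlinear term is then identified by dominated convergence, no Aubin--Lions machinery is needed, and --- importantly --- the construction automatically produces the \emph{minimal} solution (Remark \ref{solmin}), a fact the paper uses repeatedly afterwards (Proposition \ref{propcontraz}, Theorems \ref{texi2} and \ref{tunc}). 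Your route via the energy estimate, weak compactness in $X^\sigma(\Omega)$ and Aubin--Lions for the traces is a legitimate alternative and is more robust when no comparison principle is available, but it costs you the minimality of the limit and forces you to confront precisely the trace-identification and tightness issues you flag at the end; in the monotone framework the tail problem disappears, since $\|u(\cdot,t)\|_{L^1_\rho}\le\|u_0\|_{L^1_\rho}$ follows from monotone convergence and the uniform bound on $\|u_R(\cdot,t)\|_{L^1_\rho(B_R)}$ (note that only the inequality \eqref{ea40bb} is claimed here; equality requires \eqref{A1} and is Proposition \ref{propcm}). Two small points: the approximating cylinders in the paper are $\mathcal{C}_R=B_R\times(0,\infty)$, unbounded in the $y$-direction; and the constant in your energy identity should be $\tfrac{1}{m+1}$ rather than $\tfrac{m}{m+1}$, since $\int\rho\,u^m\,\partial_t u\,\dd x=\tfrac{1}{m+1}\tfrac{\dd}{\dd t}\int\rho\,u^{m+1}\,\dd x$ (compare \eqref{ea41}).
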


\begin{remark}\label{solmin} In the proof of Theorem \ref{02011} a solution $(u,w)$ is constructed. Such solution turns out to be  {\it minimal}, in the sense that if  $(\tilde u,\tilde w)$  is another solution, then $u\leq \tilde u$ and $w\leq \tilde w$.
\end{remark}

\begin{proposition}\label{propcontraz}
Let assumption \eqref{A0} be satisfied. Let $(u,w)$ and $(\hat u, \hat w)$ be minimal solutions to problem \eqref{06112} provided by Theorem \ref{02011}, corresponding to initial data $u_0$ and $\hat u_0$, respectively. Then, for any $t>0$,
\begin{equation}\label{e40b}
\int_{\Gamma} \left[ u (x,t)- \hat{u}(x,t))\right]_{+} \rho\, \dd x
\leq \int_{\Gamma} \left[ u_0 - \hat{u}_0 \right]_{+} \rho\, \dd x\,.
\end{equation}
 \end{proposition}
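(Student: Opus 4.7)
The inequality is the weighted, nonlocal version of the classical B\'enilan--Crandall $L^1$-contraction. The key tool is the Stroock--Varopoulos / Kato-type inequality for $(-\Delta)^{\sigma/2}$: for any nondecreasing Lipschitz function $p$ with $p(0)=0$, and any reasonable $f$,
\[
\int_{\Rr^N} p(f)\,(-\Delta)^{\sigma/2} f\,\dd x \;\geq\; 0.
\]
Formally I would subtract the weak equations satisfied by $u$ and $\hat u$,
\[
\rho\,\partial_t(u-\hat u) + (-\Delta)^{\sigma/2}\bigl(u^m-\hat u^m\bigr)=0,
\]
test against $p_\varepsilon(u^m-\hat u^m)$ for a smooth nondecreasing approximation of $\mathrm{sgn}_+$, discard the nonlocal contribution by Kato, and let $\varepsilon\to 0$ to reach $\chi_{\{u>\hat u\}}$ on the parabolic side. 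Because this test function is neither compactly supported nor a priori admissible in the weak formulation, the actual computation will be carried out at the level of the smooth approximating sequences defining the minimal solutions.

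\textbf{Execution at the approximation level.} The minimal pair $(u,w)$ of Theorem \ref{02011} is constructed as a monotone limit of solutions $u_n$ of regularised problems (on an expanding family of balls, with the degeneracy of $u^m$ removed and with truncated, compatibly regularised initial data $u_{0,n}$, $\hat u_{0,n}$ that converge to $u_0$, $\hat u_0$ in $L^1_\rho$). At this regular level, $p_\varepsilon(u_n^m-\hat u_n^m)$ is a legitimate test function in the subtracted weak formulation, and one obtains, for any $t>0$,
\[
\int_0^t\!\!\int_{\Rr^N}\!\! \rho\, \partial_s(u_n-\hat u_n)\,p_\varepsilon(u_n^m-\hat u_n^m)\,\dd x\,\dd s
\;+\; \int_0^t\!\!\int_{\Rr^N}\!\! p_\varepsilon(u_n^m-\hat u_n^m)\,(-\Delta)^{\sigma/2}(u_n^m-\hat u_n^m)\,\dd x\,\dd s=0,
\]
where the Kato inequality forces the second integral to be nonnegative. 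Letting $\varepsilon\to 0$ and exploiting that $m\geq 1$ with $u_n,\hat u_n\geq 0$, so that $u_n^m>\hat u_n^m \iff u_n>\hat u_n$ and $p_\varepsilon(u_n^m-\hat u_n^m)\to \chi_{\{u_n>\hat u_n\}}$ boundedly, the first integral is identified with the time increment of $\int \rho(u_n-\hat u_n)_+\,\dd x$, yielding
\[
\int_{\Rr^N} \rho\,[u_n(t)-\hat u_n(t)]_+\,\dd x \;\leq\; \int_{\Rr^N} \rho\,[u_{0,n}-\hat u_{0,n}]_+\,\dd x.
\]

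\textbf{Passage to the limit and obstacles.} By the minimality, $u_n\to u$ and $\hat u_n\to \hat u$ monotonically and in $C([0,T];L^1_\rho(\Rr^N))$, while $u_{0,n}\to u_0$, $\hat u_{0,n}\to\hat u_0$ in $L^1_\rho$. Since $[\cdot]_+$ is $1$-Lipschitz, one can therefore pass to the limit in the last display, by Fatou on the left and dominated convergence (using the uniform $L^1_\rho$ bound from \eqref{ea40bb}) on the right, obtaining \eqref{e40b}. The delicate points are: (i) justifying the chain rule $\int \rho\,\partial_t(u_n-\hat u_n)\,\chi_{\{u_n>\hat u_n\}}\,\dd x = \frac{d}{dt}\int \rho[u_n-\hat u_n]_+\,\dd x$, which requires a Steklov average in time at the approximating level; (ii) verifying the Kato/Stroock--Varopoulos inequality in the class of functions at hand, which is standard for the singular integral representation \eqref{ea1} but needs the spatial decay afforded by the regularised problem. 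Neither difficulty is novel: both are handled exactly as in the unweighted case \cite{V2}, the weight $\rho$ entering only multiplicatively in the parabolic term and causing no additional trouble.
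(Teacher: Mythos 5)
Your proposal is correct and follows essentially the same route as the paper: both reduce \eqref{e40b} to the $L^1_\rho$-contraction property of the approximating problems that define the minimal solutions and then pass to the limit using dominated convergence and the stability of $[\cdot]_+$. The only difference is that the paper simply invokes Proposition \ref{03012}--i (itself delegated to \cite{V2} and \cite{PT1}) for the approximating-level contraction, whereas you sketch its proof directly via the Kato/Stroock--Varopoulos inequality, which is consistent with how those references obtain it.
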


The statement  of Theorem \ref{02011} and Proposition \ref{propcontraz} above can be proved proceeding as in \cite{V2} and \cite{PT1}. In fact, at this stage of the analysis, the presence of the varying density $\rho$ does not bring any additional difficulty. However, we sketch the main steps of their proofs for seek of completeness and for later references.

\bigskip

We use next a spectral decomposition to define the fractional operator $(-\Delta)^{\sigma/2}$ in a bounded domain $\mathcal O$ of $\Rr^N$. In fact, let $\{\xi_n\}_1^\infty$  be an orthonormal basis of $L^2(\mathcal O)$  made by eigenfunctions of $-\Delta$ in $\mathcal O$ completed with homogeneous Dirichlet boundary conditions, and let $\{\lambda_n\}_1^\infty$
the sequence of the corresponding eigenvalues. For any $u\in C^{\infty}_0(\mathcal O)$ 
\[ (-\Delta)^{\sigma/2}u := \sum_{n=1}^\infty \lambda_n^{\sigma/2} \, u_n \, \xi_n\quad \textrm{in } \mathcal O\,, \]
where $u=\sum_{n=1}^\infty u_n\,  \xi_n$ in $L^2(\mathcal O).$
By density, $(-\Delta)^{\sigma/2} u$ can be also defined for $u$ belonging to the Hilbert space
\[ 
H_0^{\sigma/2}(\mathcal O):= \left\{u\in L^2(\mathcal O)\,\,\Big{|}\,\, \|u\|^2_{H_0^{\sigma/2}}:=\sum_{n=1}^\infty \lambda_n^{\sigma/2}u_n^2 <\infty \right\}.
\]

Recall that for every $R>0$, in view of hypothesis \eqref{A0}--(i), we have $L^1(B_R)\equiv L^1_{\rho}(B_R)$, $B_R$ being the open ball of radius $R$ with center at $0$.

Let $R>0$, $u_0\in L^1_{\rho}(B_R)$. We consider the following Cauchy-Dirichlet problem in $B_R$:
\begin{equation}
\label{02013}
    \begin{cases}
   \rho\, \partial_t u + (- \Delta)^{\frac{\sigma}{2}}\left[ u^m\right] = 0  & x\in B_R, \, t>0,\\
     u=0 & x\in \partial B_R, \, t>0,\\
     u=u_0 &  x\in B_R, \, t=0
     \end{cases}
\end{equation}
and  give next
\begin{definition}
\label{02017}
A \emph{solution} to problem \eqref{02013} is a function $u\geq 0$ such that:
\begin{itemize}
\item{} $u\in C([0,\infty); L^1_{\rho}(B_R))$ and $u^m \in L^2_\text{loc}((0, \infty): \dot{H}^{\sigma/2}(B_R))$;
\item for any $T>0, \psi\in C^1_0(B_R \times (0,T))$ there holds
\begin{equation}
\label{02018}
\int_0^T \int_{B_R} \rho\, u\, \partial_t \psi \,\dd x\, \dd t = \int_0^T \int_{B_R}(-\Delta)^{\sigma/4}u^m\,
(-\Delta)^{\sigma/4}\psi \, \dd x\, \dd t;
\end{equation}
\item $u(\cdot, 0)=u_0$ almost everywhere in $B_R$.
\end{itemize}
\end{definition}

As well as for problem \eqref{06111}, to solve problem \eqref{02013} we can also consider the analogous of problem \eqref{06112} in the half-cylinder $\mathcal{C}_R:= B_R\times (0, \infty)$ with zero lateral condition:
\begin{equation}
\label{02019}
    \begin{cases}
    L_\sigma w = 0 & (x,y)\in \mathcal{C}_R, \, t >0;\\
    w=0 & x\in \partial\mathcal{C}_R,\, y>0,\, t>0;\\
    \displaystyle \frac{\partial w}{\partial y^\sigma}= \rho\, \frac{\partial\left[ |w|^{\frac{1}{m}} \right]}{\partial t} & x\in \mathcal{C}_R,\, y=0,\, t>0;\\
     w=u_0^m & x\in\mathcal{C}_R, y=0,\, t=0.
    \end{cases}
\end{equation}

\begin{definition}
\label{020110}
A \emph{solution} to problem \eqref{02019} is a pair of functions $(u,w)$, with $u\geq 0$, $w\geq 0$, such that:
\begin{itemize}
\item{}   $u \in C\big([0, \infty); L^{1} _{\rho}(B_R)\big)$;
\item{} $ w\in L^2_{loc}\big((0,\infty); X_0^\sigma(\mathcal{C}_R)\big)$;
\item{} $w|_{B_R\times (0,\infty)}=u^m$;
\item{} for any $T>0$ and $\psi=\psi(x,y,t)$, $\psi\in C^1_0(B_R \times [0, \infty) \times (0, T))$,   there holds
 \begin{equation}
    \label{03011}
      \int_0^T \int_{ B_R} \rho\, u\, \partial_t \psi \,\dd x\,  \dd t =  {\mu_\sigma}\int_0^T \int_{\mathcal{C}_R} \nabla\psi \cdot \left(y^{1-\sigma}\nabla w\right) \,\dd x\, \dd y\, \dd t;
    \end{equation}
\item{} the identity $u(\cdot, 0)=u_0$ holds almost everywhere in $B_R$.
\end{itemize}
\end{definition}
As well as 
in the case of 
$\Rr^N$ (see Proposition \ref{02012}), the two notions of solutions given in Definition \ref{02017} and Definition \ref{020110} are equivalent.

\bigskip
The following existence result holds for problem \eqref{02019}. 

\begin{proposition}
\label{03012}
Let assumption \eqref{A0} be satisfied. Then for any $R>0$ there exists a 
solution $(u_R,w_R)$ to problem \eqref{02019}. 
Moreover, the following properties are satisfied:
\begin{itemize}
\item[i.] If $u_R$ and $\tilde{u}_R$ are solutions of \eqref{02019} corresponding to initial data $u_0$ and $\tilde{u}_0$ respectively, then
\[
\int_{B_R} \left[ u_R (x,t)- \tilde{u}_R(x,t))\right]_{+} \rho\, \dd x
\leq \int_{B_R} \left[ u_0 - \tilde{u}_0 \right]_{+} \rho\, \dd x;
\]
in particular, since
$u_0\geq 0$ then $u_R(x,t)\geq 0$ for every
$x\in B_R$ and every $t>0$;
\item[ii.]  $0\leq w_R \leq \|u_0\|_{\infty}^m$ for every $x\in \mathcal C_R$ and every $t>0$, $0\leq u_R\leq \|u_0\|_{\infty}$ for every $x\in B_R$ and every $t>0$;
\item[iii.] For any $R'>R$, $w_{R'}(x,t) \geq w_{R}(x,t)$ for every $x\in \mathcal C_{R}$ and every $t>0$;
\end{itemize}
\end{proposition}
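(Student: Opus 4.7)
The plan is to construct $(u_R,w_R)$ via implicit time discretization, following the scheme of \cite{V2} and \cite{PT1}. Since by \eqref{A0}--(i) $\rho$ is continuous and strictly positive on the compact set $\bar B_R$, it is bounded above and below by positive constants there, so the weighted spaces $L^p_\rho(B_R)$ are equivalent to $L^p(B_R)$; the variable density merely changes the reference measure on $\Gamma$ without altering the functional-analytic framework.

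Fix $T>0$ and a step size $\tau=T/k$. Starting from $u^0=u_0$, I would build $(u^{n+1},w^{n+1})$ iteratively by solving the elliptic cell problem that results from implicit Euler discretisation of the dynamical boundary condition in \eqref{02019}: find $w^{n+1}\in X^\sigma_0(\mathcal{C}_R)$ with nonnegative trace $w^{n+1}|_\Gamma=(u^{n+1})^m$, such that
$$\mu_\sigma\tau\int_{\mathcal{C}_R} y^{1-\sigma}\nabla w^{n+1}\cdot\nabla\psi\,\dd x\,\dd y + \int_{B_R}\rho\,(u^{n+1}-u^n)\,\psi|_\Gamma\,\dd x = 0$$
for every admissible $\psi$. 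This is the Euler equation of a strictly convex coercive functional on $X^\sigma_0(\mathcal{C}_R)$ obtained by adding the $X^\sigma$-seminorm to a superlinear trace term of the form $\frac{m}{m+1}\int_{B_R}\rho\,|w|_\Gamma|^{(m+1)/m}\,\dd x$ and the linear term $-\int_{B_R}\rho\,u^n\,w|_\Gamma\,\dd x$; existence and uniqueness of the minimiser follows by direct methods, and nonnegativity of its trace follows from testing with $-(w^{n+1})_-$.

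Next I would derive the three uniform-in-$\tau$ estimates. The $L^\infty$ bound in item \emph{(ii)} follows inductively: testing the cell equation against $(w^{n+1}-\|u_0\|_\infty^m)_+$ (extended harmonically) shows that if $u^n\leq\|u_0\|_\infty$ then $u^{n+1}\leq\|u_0\|_\infty$. For item \emph{(i)}, subtracting two cell equations with initial data $u_0,\tilde u_0$ and testing against a smooth regularisation of $\mathrm{sgn}^+(u^{n+1}-\tilde u^{n+1})$ (again extended harmonically), one uses the monotonicity of $s\mapsto s^m$ to ensure that the gradient term has a favourable sign, yielding the discrete $L^1_\rho$-contraction which iterates to the continuous statement. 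Testing against $w^{n+1}$ and summing in $n$ provides uniform bounds of the piecewise-constant interpolant $w_\tau$ in $L^2_{\mathrm{loc}}((0,\infty);X^\sigma_0(\mathcal{C}_R))$ and time-equicontinuity of $u_\tau$ in $L^1_\rho(B_R)$.

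Finally, I would pass to the limit $\tau\to 0$: weak convergence $w_\tau\rightharpoonup w$ in $L^2_{\mathrm{loc}}(X^\sigma_0)$ handles the gradient term of \eqref{03011}, while an Aubin-Lions argument combined with the $L^\infty$ bound gives strong $L^1_\rho$ convergence of $u_\tau$ to $u$ with $w|_\Gamma=u^m$, so $(u,w)$ satisfies Definition \ref{020110}. Item \emph{(iii)} would then follow from the comparison argument underlying \emph{(i)}, applied at the discrete level: $w_{R'}^{n+1}$ restricted to $\mathcal{C}_R$ satisfies $L_\sigma w_{R'}^{n+1}=0$ and the same dynamical boundary condition as $w_R^{n+1}$ on $B_R$, but has nonnegative boundary values on $\partial B_R\times[0,\infty)$ while $w_R^{n+1}$ vanishes there; testing the difference against a regularisation of $\mathrm{sgn}^+(u_R^{n+1}-u_{R'}^{n+1})$, which is admissible precisely because it vanishes on the lateral boundary thanks to these sign conditions, forces $u_R^{n+1}\leq u_{R'}^{n+1}$ on $B_R$, and the inequality transfers to $w_R\leq w_{R'}$ on $\mathcal{C}_R$ in the limit. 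The main obstacle is the compactness step: degeneracy of the equation at $u=0$ precludes a direct estimate of $\partial_t u_\tau$ in a dual space, so Aubin-Lions must be applied using the $X^\sigma$-energy bound on $w_\tau$ together with the equicontinuity-in-time estimate for $u_\tau$ derived by inserting the solution itself as test function, as in \cite[Section 4]{V2}.
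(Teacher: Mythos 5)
Your proposal is correct and follows essentially the same route the paper takes: the paper's proof of this proposition is simply a citation of \cite[Theorems 7.1 and 7.2]{V2} and \cite[Proposition 3.5]{PT1}, whose argument is precisely the implicit-time-discretisation scheme with the convex variational cell problem on $\mathcal{C}_R$, the $L^1_\rho$-contraction and $L^\infty$ barriers at the discrete level, and the comparison with lateral boundary data for the monotonicity in $R$ that you describe. Your observation that $\rho$ is bounded above and below on $\overline{B}_R$, so the weighted spaces coincide with the unweighted ones, is exactly why the authors can defer to those references without modification.
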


\begin{proof}
The statement follows as well as in \cite[Theorem 7.1 and Theorem 7.2]{V2} and \cite[Proposition 3.5]{PT1}.
\end{proof}

\begin{proof}
[Proof of Theorem \ref{02011}.]
To solve the problem in the whole $\Rr^N$ we proceed along the same lines of \cite[Theorem 7.2]{V2} and \cite[Theorem 3.1]{PT1}. 
For any $R>0$, by Proposition \ref{03012} there exists a weak solution $(u_R, w_R)$ to problem \eqref{02019} in ${\mathcal C_R}\times (0,\infty)$. Since $w_R$ are monotonic decreasing with respect to $R$ and uniformly bounded, there exist the limits
 \begin{gather*}
 \lim_{R\to \infty} u_R=: u \quad\text{in $\Gamma\times (0,\infty)$},\\
 \lim_{R\to \infty}w_R =: w \quad\text{in $\Omega\times (0,\infty)$}.
 \end{gather*}
Then by usual compactness arguments, it is easy to check that $(u,w)$ is a solution to \eqref{06112}. Clearly, by construction, $(u,w)$  is the minimal solution.  
Moreover, from Proposition \ref{03012} we can infer that \eqref{ea40} and \eqref{ea40bb} hold true.
\end{proof}

We conclude this Section by showing the $L^1_{\rho}$ contraction principle stated in Proposition \ref{propcontraz}.
\begin{proof}[Proof of Proposition \ref{propcontraz}]
Arguing as in the proof of Theorem \ref{02011}, we have 
\[ 
u = \lim_{R\to \infty}u_R,  \qquad \hat u= \lim_{R\to \infty} \hat u_R  \qquad\textrm{in }\Rr^N\,,
\]
where $u_R$ and $\hat u_R$ solve the approximating problem \eqref{02019} with initial data $u_0$ and $\hat u_0$, respectively.  
Observe that 
\begin{gather*}
u_0\chi_{B_R} \to u_0, \quad \hat u_0\chi_{B_R}\to \hat u_0 \quad \textrm{as } R\to\infty, \quad\textrm{in }\Rr^N,\\
 |u_0-\hat u_0|\chi_{B_R}\leq |u_0|+|\hat u_0|\quad \textrm{in }\Rr^N, \quad  \textrm{for all }R>0.
\end{gather*}
Furthermore, 
\[ 
|u_R-\hat u_R| \leq |u|+|\hat u|   \quad \textrm{in } B_R, \quad\textrm{for all }R>0.
\]
Since $u_0, \hat u_0, u, \hat u\in L^+_\rho(\Rr^N)$, from the dominated convergence theorem and Proposition \ref{03012}-$i.$ we obtain the conclusion.
\end{proof}

Henceforth, unless otherwise specified, the term {\em solution} must  be understood in the sense of Definition \ref{defsol}.

\section{Slowly decaying density}
\label{slow}
Let us assume the following condition:
\begin{equation}
\label{A1}
\tag{{\bf A}$_1$}
\begin{array}{c}
  \textrm{there exist}\; \hat C>0, \hat R>0 \; \textrm{and}\; \alpha\in (0, \sigma) \; \textrm{such that}\\
  \rho(x)\geq \hat C  |x|^{- \alpha}\quad \textrm{for all}\;\, x\in \Rr^N\setminus B_{\hat R}\,\,.
\end{array}
\end{equation}
Under hypothesis \eqref{A1} we will establish both uniqueness of solutions not satisfying any extra condition at infinity
and conservation of mass. 

\subsection{Uniqueness of solutions}
We shall prove the following
\begin{theorem}\label{tuni}
Let assumptions \eqref{A0}, \eqref{A1} be satisfied. Then there exists at
most only one bounded solution to problem \eqref{06111}.
\end{theorem}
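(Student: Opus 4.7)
The strategy is to establish the $L^1_\rho$ contraction $\|(u_1-u_2)(\cdot,\tau)\|_{L^1_\rho}=0$ for any two bounded solutions $u_1,u_2$ sharing the initial datum $u_0$, by combining a Kato-type inequality for $(-\Delta)^{\sigma/2}$ with a carefully scaled cutoff whose fractional Laplacian is controlled by $\rho$ itself. Set $\bar u:=u_1-u_2$ and $\bar v:=u_1^m-u_2^m$. The uniform bound $\|u_i\|_\infty\le M:=\|u_0\|_\infty$ from \eqref{ea40} gives $|\bar v|\le m M^{m-1}|\bar u|$ and $\mathrm{sign}(\bar v)=\mathrm{sign}(\bar u)$. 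Kato's inequality $(-\Delta)^{\sigma/2}|\bar v|\le\mathrm{sign}(\bar v)(-\Delta)^{\sigma/2}\bar v$, combined with the equation $\rho\,\partial_t\bar u+(-\Delta)^{\sigma/2}\bar v=0$, yields in the sense of distributions
\[
\rho\,\partial_t|\bar u|+(-\Delta)^{\sigma/2}|\bar v|\le 0.
\]
Rigorously this is justified at the level of \eqref{02016} (or the equivalent extended formulation \eqref{06115}) by the convex regularization $|s|\leftrightarrow \sqrt{s^2+\delta^2}-\delta$ and passage to $\delta\to 0$.

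The key analytic ingredient is the construction of a family of test functions with controlled fractional Laplacian. I would take $\phi_R(x):=\phi(x/R)$, where $\phi\in C^\infty_c(\Rr^N)$ is a nonnegative radially non-increasing bump with $\phi\equiv 1$ on $B_1$ and $\mathrm{supp}\,\phi\subset B_2$. The scaling law $(-\Delta)^{\sigma/2}\phi_R(x)=R^{-\sigma}[(-\Delta)^{\sigma/2}\phi](x/R)$ together with the standard decay $|(-\Delta)^{\sigma/2}\phi(y)|\le C(1+|y|)^{-N-\sigma}$ gives the pointwise bound $|(-\Delta)^{\sigma/2}\phi_R(x)|\le C R^{N}(R+|x|)^{-N-\sigma}$. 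Splitting into the regions $\{|x|\le \hat R\}$, $\{\hat R<|x|\le R\}$, $\{|x|>R\}$, and using the lower bound $\rho(x)\ge \hat C|x|^{-\alpha}$ for $|x|\ge \hat R$ from \eqref{A1} together with the positivity and continuity of $\rho$ on $\overline{B_{\hat R}}$, a direct case-by-case computation delivers the uniform control
\[
|(-\Delta)^{\sigma/2}\phi_R(x)|\le C_\rho\,R^{\alpha-\sigma}\,\rho(x)\qquad\forall\,x\in\Rr^N,
\]
where the exponent $\alpha-\sigma<0$ is exactly the one permitted by hypothesis \eqref{A1}.

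Putting these ingredients together, I would integrate the distributional inequality above against $\phi_R(x)\eta_\tau(t)$, with $\eta_\tau\in C^\infty_c((0,\infty))$ a smooth approximation of $\chi_{[0,\tau]}$, and use $\bar u(\cdot,0)=0$ to discard the initial boundary term. Combining $|\bar v|\le m M^{m-1}|\bar u|$ with the $L^1_\rho$-bound \eqref{ea40bb} ($\|\bar u(t)\|_{L^1_\rho}\le 2\|u_0\|_{L^1_\rho}$) and the estimate of the previous paragraph, one obtains
\[
\int_{\Rr^N}\rho\,|\bar u(\cdot,\tau)|\,\phi_R\,\dd x\le 2m M^{m-1} C_\rho\,\tau\,\|u_0\|_{L^1_\rho}\,R^{\alpha-\sigma}.
\]
Since $\phi_R\nearrow 1$ pointwise as $R\to\infty$ (by the radial monotonicity of $\phi$), monotone convergence on the left together with $R^{\alpha-\sigma}\to 0$ on the right yields $\|\bar u(\cdot,\tau)\|_{L^1_\rho}=0$ for every $\tau>0$, hence $u_1=u_2$. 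The main technical obstacle is the rigorous justification of the Kato step within the weak-energy class of Definition \ref{defsol}, since neither $|\bar u|$ nor $|\bar v|$ inherits enough regularity to apply the pointwise version: the convex regularization procedure must commute both with the weak formulation and with the scaling limit $R\to\infty$; the qualitative role of \eqref{A1} is to reduce uniqueness to precisely this decay of $|(-\Delta)^{\sigma/2}\phi_R|/\rho$.
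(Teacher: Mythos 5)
Your overall strategy---test the equation for the difference of two solutions against the scaled cutoff $\phi_R$, exploit the scaling law $(-\Delta)^{\sigma/2}\phi_R(x)=R^{-\sigma}[(-\Delta)^{\sigma/2}\phi](x/R)$ together with the decay $|(-\Delta)^{\sigma/2}\phi(y)|\le C(1+|y|)^{-N-\sigma}$, and use \eqref{A1} with $\alpha<\sigma$ to make the right-hand side vanish as $R\to\infty$---is exactly the paper's (its lemma giving \eqref{ea3}--\eqref{ea4} and the computation \eqref{ea10}--\eqref{ea13}). Your pointwise domination $|(-\Delta)^{\sigma/2}\phi_R(x)|\le C_\rho R^{\alpha-\sigma}\rho(x)$ is correct: on $B_{\hat R}$ use $\rho\ge\min_{\overline{B}_{\hat R}}\rho>0$ and $|(-\Delta)^{\sigma/2}\phi_R|\le CR^{-\sigma}$; on $B_R\setminus B_{\hat R}$ use $\rho\ge\hat C R^{-\alpha}$; for $|x|>R$ use $(|x|/R)^{\alpha-N-\sigma}\le 1$ since $\alpha<N+\sigma$. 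This replaces the paper's H\"older step with a large exponent $p$ chosen so that $\alpha<(p\sigma-N)/(p-1)$; your version is in effect the $q=1$ endpoint of that computation and gives a more transparent proof of the decisive limit \eqref{ea13}.

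The genuine gap is the Kato step, which you flag yourself but do not close, and which is where your route actually departs from the paper's. In the weak energy class of Definition \ref{defsol} the inequality $\rho\,\partial_t|\bar u|+(-\Delta)^{\sigma/2}|\bar v|\le 0$ is not available for free: $\partial_t\bar u$ is only a distribution, $\mathrm{sign}(\bar v)$ is not an admissible test function in \eqref{02016} (test functions must be $C^1_0$ and the form pairs $(-\Delta)^{\sigma/4}(u^m)$ with $(-\Delta)^{\sigma/4}\psi$), and the convex regularization $\Phi_\delta(s)=\sqrt{s^2+\delta^2}-\delta$ produces cross terms in the nonlocal quadratic form whose sign requires a Stroock--Varopoulos-type argument, plus a time regularization to handle the pairing with $\rho\,\partial_t\bar u$. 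Nothing in your sketch addresses this. The step is, moreover, unnecessary: the paper obtains the integrated identity \eqref{ea8} for the \emph{signed} difference simply by testing with $\psi(x)f_n(t)$, $f_n\to\chi_{[t_1,t_2]}$, and letting $n\to\infty$ (this uses only $u\in C([0,\infty);L^1_\rho(\Rr^N))$ and $u^m\in L^2_{\mathrm{loc}}((0,\infty);\dot H^{\sigma/2})$); your cutoff estimate then yields $\int\rho\,\bar u(\cdot,t_2)\,\dd x=\int\rho\,\bar u(\cdot,t_1)\,\dd x\to 0$ as $t_1\to0^+$. To convert this conservation of the signed integral into pointwise equality, compare with the minimal solution of Remark \ref{solmin}: taking one of the two solutions to be the minimal one gives $\bar u\ge 0$, so $\int\rho\,\bar u=0$ and $\rho>0$ force $\bar u=0$, and since every solution then coincides with the minimal one, uniqueness follows. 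If you insist on the $L^1_\rho$-contraction with absolute values, you must supply the Kato/Stroock--Varopoulos lemma in this weak setting; as written, that is the missing piece. (A minor point: you invoke \eqref{ea40bb} for arbitrary solutions, but it is stated only for the constructed minimal one; you only need $\sup_{0\le t\le\tau}\|\bar u(\cdot,t)\|_{L^1_\rho}<\infty$, which already follows from $u_i\in C([0,\infty);L^1_\rho(\Rr^N))$.)
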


Before proving Theorem \ref{tuni}, let us introduce the following notations. First of all we set, for later use,
\begin{equation}
\label{ea22}
G(s):=s^m \quad (s\in\Rr^+).
\end{equation} 
We also take a nonnegative non-increasing cut-off function $\eta$ such that
\begin{equation}\label{ea5}
\eta(s)=
\begin{cases}
1  &\textrm{if } 0\leq s\leq 1,\\ \\
0 & \textrm{if } s\geq 2;
\end{cases}
\end{equation}
then, for each $R>0$, define
\begin{equation}\label{ea2}
\varphi_R(x):= \eta\left(\frac{|x|}{R} \right) \;\;  \textrm{for all}\;\;  x\in\Rr^N\,.
\end{equation}
Observe that, for  $R=1$, $\varphi_1(x)=\eta(|x|)\;\; (x\in \Rr^N)$.

By using the representation \eqref{ea1} it can be shown the following lemma (see \cite[Section 9.2]{V2}, \cite{BonfV}).
\begin{lemma} 
For any $R>0$ let $\varphi_R$ be the function defined by \eqref{ea2} and \eqref{ea5} and let $y:=\frac{x}R$. Then, for any $R>0$,
\begin{equation}
\label{ea3}
(-\Delta)_x^{\sigma/2} \varphi_R (x) = R^{-\sigma}  (-\Delta)_y^{\sigma/2}\varphi_1(y) \quad \textrm{for all}\;\; x\in\Rr^N.
\end{equation}
Furthermore, there exists a constant $\bar C>0$ such that
\begin{equation}\label{ea4}
\left|  (-\Delta )^{\sigma/2}\varphi_1(x)  \right| \leq \frac{\bar C}{1+|x|^{N+\sigma}}\quad \textrm{for all}\;\; x\in \Rr^N\,.
\end{equation}
\end{lemma}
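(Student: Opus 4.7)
My plan is to prove the two assertions independently: the scaling identity \eqref{ea3} follows from a direct change of variables in the singular integral representation, while the pointwise bound \eqref{ea4} requires splitting the analysis into a near-field and far-field regime.

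For \eqref{ea3}, I would start from the representation \eqref{ea1}, writing
\[
(-\Delta)^{\sigma/2}\varphi_R(x) = C_{N,\sigma}\,\mathrm{P.V.}\int_{\Rr^N}\frac{\varphi_1(x/R)-\varphi_1(z/R)}{|x-z|^{N+\sigma}}\,\dd z,
\]
since $\varphi_R(x)=\varphi_1(x/R)$ by definition. Setting $w=z/R$ and $y=x/R$ gives $|x-z|=R|y-w|$ and $\dd z=R^N\,\dd w$, so the $R$-powers combine to yield $R^{N-(N+\sigma)}=R^{-\sigma}$ outside the integral, leaving the singular integral that defines $(-\Delta)_y^{\sigma/2}\varphi_1(y)$. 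The principal value is preserved by this rescaling since the ball around the singularity rescales accordingly.

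For \eqref{ea4}, the first step is to observe that $\varphi_1$ is smooth (assuming $\eta$ is chosen smooth, which may be done without loss of generality) and compactly supported in $\overline{B_2}$, so $(-\Delta)^{\sigma/2}\varphi_1$ is a continuous bounded function on all of $\Rr^N$; in particular there exists $M>0$ with $|(-\Delta)^{\sigma/2}\varphi_1(x)|\le M$ for $|x|\le 4$. This handles the region $|x|\le 4$, since there $1+|x|^{N+\sigma}\le 1+4^{N+\sigma}$ is bounded. The main work is the far-field estimate for $|x|\ge 4$. In that range $\varphi_1(x)=0$, so
\[
(-\Delta)^{\sigma/2}\varphi_1(x)= -C_{N,\sigma}\int_{B_2}\frac{\varphi_1(z)}{|x-z|^{N+\sigma}}\,\dd z,
\]
with no need for a principal value since $x$ is outside the support of $\varphi_1$. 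Using $|x-z|\ge |x|-|z|\ge |x|/2$ when $|z|\le 2\le |x|/2$, I get
\[
|(-\Delta)^{\sigma/2}\varphi_1(x)|\le C_{N,\sigma}\,\|\varphi_1\|_\infty\,|B_2|\,\frac{2^{N+\sigma}}{|x|^{N+\sigma}}\le \frac{C'}{|x|^{N+\sigma}}.
\]
Combining the two regimes and noting that $\frac{1}{|x|^{N+\sigma}}$ and $\frac{1}{1+|x|^{N+\sigma}}$ are comparable for $|x|\ge 4$, I obtain the bound with a suitable constant $\bar C$.

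I do not expect a genuine obstacle here; the computation is essentially a homogeneity check together with a crude tail estimate. The one point that needs attention is regularity of the cut-off: if $\eta$ is only Lipschitz, one must invoke standard estimates for the fractional Laplacian of Lipschitz compactly supported functions to ensure $(-\Delta)^{\sigma/2}\varphi_1\in L^\infty_{\text{loc}}$, but this is a standard fact and causes no real trouble provided $\eta\in C^\infty$, which we can assume.
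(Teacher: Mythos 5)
Your proof is correct and is essentially the standard argument that the paper delegates to \cite[Section 9.2]{V2} and \cite{BonfV}: the scaling identity \eqref{ea3} by change of variables in the singular integral \eqref{ea1}, and the decay \eqref{ea4} by combining boundedness of $(-\Delta)^{\sigma/2}\varphi_1$ near the origin with the far-field estimate $|x-z|\geq |x|/2$ on the support of $\varphi_1$. Your remark about requiring $\eta$ smooth is the right caveat, and is indeed implicitly assumed in the paper's construction of the cut-off.
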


\medskip

We are now in position to prove Theorem \ref{tuni}.

\begin{proof}
[Proof of Theorem \ref{tuni}]
Suppose, by contradiction, that there exist two different solutions $u, \tilde
u$ to problem \eqref{06111}. Take any $p\geq 1$ and $q\geq 1$ to be fixed later such
that $\frac 1 p + \frac 1 q=1$. 

Since $u$ and $\tilde u$ are bounded, we get, for some $L>0$,
\begin{equation}\label{ea15}
|G(u)- G(\tilde u)|\le L |u-\tilde u|\le C |u-\tilde u|^{\frac 1
q}\quad \textrm{in}\;\; \Rr^N\times (0,\infty)\,,
\end{equation}
where $C:=L\big(\|u\|_{\infty} + \|\tilde u\|_{\infty}
\big)^{1-\frac1 q}$ and $G$ is defined in \eqref{ea22}.

\smallskip

Let $0<t_1<t_2<T$ and set $n_0:=\left[\frac{2}{t_2-t_1}\right]+1$. Consider a sequence of functions
$\{f_n\}_{n\ge n_0}\subset C^{\infty}([0,\infty))$,  such that, for any $n\geq n_0$,
\begin{gather*}
0\leq f_n(t)\leq 1\textrm{ for any }t\geq 0, \qquad f_n(t)=0\textrm{ for any }t\notin [t_1, t_2], \\
f_n(t)=1 \textrm{ for any }t\in \left[t_1+\frac 1 n, t_2-\frac 1 n \right].
\end{gather*}
Note that
$f_n(t) \to \chi_{[t_1, t_2]}(t)$ as $n\to \infty$ for any $t\geq 0$,
so
$ f'_n(t)$ converges to $\delta(t-t_1) - \delta(t-t_2)$ as $n\to \infty$ in the sense of distributions.

\smallskip

Let $\psi\in C^{\infty}_0(\Rr^N);$ for any $n\geq n_0$ set
\[ 
\psi_n(x,t):= \psi(x) f_n(t) \quad\text{for all } x\in\Rr^n, \, t\geq 0.
\]
For each $n\geq n_0$ we can use such $\psi_n$ as test function in Definition \ref{defsol}, so, integrating by parts, 
\begin{align*}
\int_0^T \int_{\Rr^N} \rho\, u\,\psi f'_n(t) \dd x\, \dd t &= \int_0^T \int_{\Rr^N}f_n(t)(-\Delta)^{\sigma/4}(G(u))\,(-\Delta)^{\sigma/4}\psi \, \dd x\, \dd t\\
 &= - \int_0^T \int_{\Rr^N} f_n(t) G(u) (-\Delta)^{\sigma/2}  \psi \, \dd x \,\dd t. 
\end{align*}
Then sending $n\to \infty$ we get:
\begin{equation}\label{ea6}
\int_{\Rr^N} \rho(x)[ u(x, t_2)- u(x, t_1) ] \psi(x) \dd x \,=\,
\int_{t_1}^{t_2}\int_{\Rr^N} G(u) (-\Delta)^{\sigma/2}  \psi \dd x
\dd t\,.
\end{equation}
Analogously we have
\begin{equation}\label{ea7}
\int_{\Rr^N} \rho(x)[ \tilde u(x, t_2)- \tilde u(x, t_1) ] \psi(x)
\dd x \,=\, \int_{t_1}^{t_2}\int_{\Rr^N} G(\tilde u)
(-\Delta)^{\sigma/2}  \psi \dd x \dd t \,.
\end{equation}
Subtracting \eqref{ea7} to \eqref{ea6} we obtain:
\begin{multline}
\label{ea8}
\int_{\Rr^N} \rho(x)[ u(x,t_2)- \tilde u(x,t_2)] \psi(x) \dd x -
\int_{\Rr^N}\rho(x)[ u(x, t_1) - \tilde u (x, t_1)] \psi(x) \dd x\\ 
= \int_{t_1}^{t_2} \int_{\Rr^N}\zeta (-\Delta)^{\sigma/2} \psi(x)
\dd x\,  \dd t,
\end{multline}
where $\zeta:= G(u)-G(\tilde u).$ 

For any $R>0$, let $\varphi_R$ be the function defined in \eqref{ea2} and \eqref{ea5}.
Formula \eqref{ea8} for $\psi=\varphi_R$ gives:
\begin{multline}
\label{ea9}
\int_{\Rr^N} \rho(x)[ u(x,t_2)- \tilde u(x,t_2)] \varphi_R(x) \dd
x - \int_{\Rr^N}\rho(x)[ u(x, t_1) - \tilde u (x, t_1)]
\varphi_R(x) \dd x \\ = \int_{t_1}^{t_2} \int_{\Rr^N}\zeta
(-\Delta)^{\sigma/2} \varphi_R(x) \dd x\,  \dd t,
\end{multline}
Now we estimate the absolute value of the right hand side of
\eqref{ea9}. In view of  
\eqref{ea15}, by using H\"older inequality we obtain:
\begin{align}
\left|\int_{t_1}^{t_2} \int_{\Rr^N} \zeta (-\Delta)^{\sigma/2} \varphi_R(x) \dd x \,\dd t \right| 
  \nonumber\\
\leq C \int_{t_1}^{t_2} \int_{\Rr^N} |u -\tilde u|^{1/q} \big|(-\Delta)^{\sigma/2} \varphi_R(x)\big| \dd x \,\dd t \nonumber \\ 
= C \int_{t_1}^{t_2} \int_{\Rr^N} [\rho(x)]^{-1/q}\big| (-\Delta)^{\sigma/2} \varphi_R(x) \big| |u-\tilde u|^{1/q}  [\rho(x)]^{1/q}\dd x \,\dd t \nonumber \\ 
\leq C  (t_2-t_1)^{1/p}\left( \int_{\Rr^N} [\rho(x)]^{-p/q}\big| (-\Delta)^{\sigma/2} \varphi_R(x) \big|^p \dd x\right)^{1/p}  \nonumber \\ 
 \cdot\left(\int_{t_1}^{t_2} \int_{\Rr^N}|u-\tilde u| \rho(x) \dd x \,\dd t \right)^{1/q} \label{ea10}. 
\end{align}
Set
\[
C_1:= (\max_{\overline{B}_{\hat R}} \rho)^{-p/q}, \qquad  C_2:= \int_{B_{\hat R}} \big| (-\Delta)^{\sigma/2}\varphi_1(x) \big|^p \dd x.
\]
Performing the change of variable $y:=\frac{|x|}R$, using
hypothesis \eqref{A1}, and properties \eqref{ea3} 
and \eqref{ea4} we
get:
\begin{align}
\label{ea11}
& \, \int_{\Rr^N} [\rho(x)]^{-p/q}\big|(-\Delta)^{\sigma/2} \varphi_R(x) \big|^p  \dd x  \nonumber \\
 \leq &\,   C_1 \int_{B_{\hat R}}  \big|(-\Delta)^{\sigma/2} \varphi_R(x) \big|^p \dd x
+\frac{1}{\hat C^{p/q}} \int_{\Rr^N\backslash B_{\hat R}}  \big|(-\Delta)^{\sigma/2} \varphi_R(x) \big|^p |x|^{\alpha\frac p q} \dd x \nonumber\\
=  &\, C_1 R^{N-p\sigma} \int_{B_{\hat  R/ R}} \big|(-\Delta)^{\sigma/2} \varphi_1(y) \big|^p \dd y \nonumber\\
&\qquad \qquad \qquad\qquad\qquad\quad +\frac{R^{N-p\sigma+\alpha \frac{p}{q}}}{\hat C^{p/q}} \int_{\Rr^N\backslash 
B_{\hat  R/ R}}  \big|(-\Delta)^{\sigma/2} \varphi_1(y) \big|^p |y|^{\alpha\frac p q} \dd y \nonumber\\
\leq &\, C_1C_2 R^{N-p\sigma} + \frac{R^{N-p\sigma+\alpha \frac p q}}{\hat C^{p/q}}  \int_{\Rr^N}\frac 1{ (1+ |y|^{(N+\sigma)})^p |y|^{-\alpha \frac p q}}\dd y\,.
\end{align}
Since $0<\alpha<\sigma$, we can choose $p>1$ so big that $\alpha<\frac{p\sigma- N }{p-1}$. Thus, since $q=\frac p{p-1}$,
\begin{equation}\label{ea12}
\begin{split}
-p\sigma + N+\alpha \frac p q <0,  \\
(N+\sigma)p - \alpha \frac p q > N\,.
\end{split}
\end{equation}
From \eqref{ea12}, \eqref{ea11}, \eqref{ea10}, for $u, \tilde u \in L^1_\rho(\Rr^N)$, it follows that
\begin{equation}\label{ea13}
\int_{t_1}^{t_2}\int_{\Rr^N} \zeta (-\Delta)^{\sigma/2}\varphi_R(x)
\dd x \to 0\quad\textrm{as}\;\; R\to \infty\,.
\end{equation}
On the other hand, since $u, \tilde u \in L^1_\rho(\Rr^N),
0\leq \varphi_R\le 1$ and $\varphi_R(x)\to 1$ as $R\to\infty$ for
every $x\in \Rr^N$, by the dominated convergence theorem,
\begin{multline}
\label{ea14}
\lim_{R\to \infty}\left\{ \int_{\Rr^N} \rho(x)[ u(x,t_2)- \tilde
u(x,t_2)]\varphi_R \dd x - \right.\\
\left. \int_{\Rr^N}\rho(x)[ u(x, t_1) - \tilde u (x, t_1)]\varphi_R \dd x\right\}\\ = \int_{\Rr^N}
\rho(x)[ u(x,t_2)- \tilde u(x,t_2)]\dd x - \int_{\Rr^N}\rho(x)[
u(x, t_1) - \tilde u (x, t_1)]\dd x \,.
\end{multline}
As a consequence of \eqref{ea9}, \eqref{ea13} and \eqref{ea14} we
have:
$$\int_{\Rr^N} \rho(x)[ u(x,t_2)- \tilde u(x,t_2)]  \dd x - \int_{\Rr^N}\rho(x)[ u(x, t_1) - \tilde u (x, t_1)] \dd x \, = \, 0\,.$$
Since $u,\tilde u \in C\big([0,\infty);L^1_\rho(\Rr^N)\big)$ and $u(x,0)=\tilde u(x, 0) = u_0(x)$ for almost every $x \in \Rr^N$, we have, as $t_1\to 0^+$,
\[ 
\int_{\Rr^N}\rho(x)[u(x,t_2)-\tilde u(x, t_2)]\dd x =
\int_{\Rr^N}\rho(x)[u(x,0)-\tilde u(x, 0)]\dd x =0.
\]
This implies $u(x,t_2)= \tilde u(x, t_2)$, for almost every $x \in \Rr^N$, because $\rho>0$ in $\Rr^N$. Since
$t_2>0$ were arbitrary, the proof is complete. 
\end{proof}

\begin{remark}
\label{oss1a}
\begin{itemize}
\item[i.] For problem \eqref{16011}, uniqueness is proved in \cite{RV2} supposing \eqref{A1} with $\alpha \in (0, N]$. Furthermore, uniqueness for {\em very weak } solutions is established in \cite{P1} assuming \eqref{A1} with $\alpha\in (0, 2]$.
\item[ii.] In the present situation, if one wanted to weaken hypothesis \eqref{A1} and to apply the same arguments as above in order to show uniqueness,  one should replace $\varphi_1$ by a function satisfying a decay condition stronger than \eqref{ea4}. Unfortunately, as shown in \cite{BonfV}, the decay \eqref{ea4} is the minimal one can expect.
\end{itemize}
\end{remark}

\subsection{Conservation of mass}
By exploiting some arguments introduced in the proof of Theorem \ref{tuni}, we can prove the following property of solutions to problem \eqref{06111}. 
\begin{proposition}\label{propcm}
Let assumptions \eqref{A0}, \eqref{A1} be satisfied. Let $u$ be the bounded solution to problem \eqref{06111}. Then 
\begin{equation} \label{ea48}
\int_{\Rr^N} u(x,t) \rho(x) \dd x \,=\, \int_{\Rr^N} u_0(x) \rho(x) \dd x\quad \textrm{for any}\;\; t>0\,. 
\end{equation}
\end{proposition}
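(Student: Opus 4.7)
The plan is to re-use the test-function machinery developed in the proof of Theorem \ref{tuni}, applied to a single solution rather than to the difference of two. Concretely, I would choose the test function $\psi(x,t)=\varphi_R(x)\, f_n(t)$, with $\varphi_R$ as in \eqref{ea2}--\eqref{ea5} and $f_n$ the same time cutoff used in the uniqueness proof, plug it into the weak formulation of Definition \ref{defsol}, and pass to the limit $n\to\infty$. This produces, exactly as in \eqref{ea6}, the localized mass-balance identity
\[
\int_{\Rr^N}\rho(x)\bigl[u(x,t_2)-u(x,t_1)\bigr]\varphi_R(x)\,\dd x \;=\; \int_{t_1}^{t_2}\!\int_{\Rr^N} u^m\,(-\Delta)^{\sigma/2}\varphi_R(x)\,\dd x\,\dd t
\]
for all $0<t_1<t_2$. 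Conservation of mass will follow once I show that the right-hand side vanishes as $R\to\infty$ and then let $t_1\to 0^+$.

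To estimate the right-hand side I would use the boundedness of $u$ given by \eqref{ea40} to write $u^m\leq \|u_0\|_\infty^{m-1}\,u$, and then mimic the Hölder splitting from the proof of Theorem \ref{tuni}: with conjugate exponents $p,q>1$ to be fixed later, decompose
\[
u\,\bigl|(-\Delta)^{\sigma/2}\varphi_R\bigr| \;=\; (u\rho)^{1/q}\cdot u^{1/p}[\rho]^{-1/q}\bigl|(-\Delta)^{\sigma/2}\varphi_R\bigr|.
\]
Hölder's inequality, combined with the uniform-in-time bound $\|u(\cdot,t)\|_{L^1_\rho}\leq\|u_0\|_{L^1_\rho}$ from \eqref{ea40bb} and the $L^\infty$ bound on $u$, then yields
\[
\int_{t_1}^{t_2}\!\int_{\Rr^N} u^m\bigl|(-\Delta)^{\sigma/2}\varphi_R\bigr|\,\dd x\,\dd t \;\leq\; C\,(t_2-t_1)\!\left(\int_{\Rr^N}[\rho(x)]^{-p/q}\bigl|(-\Delta)^{\sigma/2}\varphi_R\bigr|^p\,\dd x\right)^{\!1/p}\!,
\]
where $C$ depends only on $\|u_0\|_\infty$ and $\|u_0\|_{L^1_\rho}$.

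The remaining spatial integral is precisely the one already estimated in \eqref{ea11}: choosing $p$ large enough so that $\alpha<(p\sigma-N)/(p-1)$---which is possible exactly because \eqref{A1} enforces $\alpha<\sigma$---and using the scaling identity \eqref{ea3} together with the pointwise decay \eqref{ea4}, this integral tends to zero as $R\to\infty$. Dominated convergence on the left of the mass-balance identity (justified by $\varphi_R\nearrow 1$ and $u(\cdot,t)\in L^1_\rho(\Rr^N)$) then gives $\int_{\Rr^N}\rho\,u(\cdot,t_2)\,\dd x=\int_{\Rr^N}\rho\,u(\cdot,t_1)\,\dd x$ for all $0<t_1<t_2$, and the continuity $u\in C([0,\infty);L^1_\rho(\Rr^N))$ built into Definition \ref{defsol} lets me send $t_1\to 0^+$ to conclude \eqref{ea48}. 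The only mildly delicate point---already handled in the uniqueness proof---is the careful choice of $p$ compatible with the slow-decay condition \eqref{A1}; beyond that, the argument is a routine adaptation of the machinery of Section \ref{slow}.
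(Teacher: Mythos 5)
Your proposal is correct and follows essentially the same route as the paper: the paper's proof simply takes \eqref{ea6} with $\psi=\varphi_R$ and invokes \eqref{ea13} with $\zeta$ replaced by $G(u)$, which is exactly the H\"older/scaling estimate you reproduce (your splitting $(u\rho)^{1/q}\cdot u^{1/p}\rho^{-1/q}$ versus the paper's $G(u)\le C u^{1/q}$ followed by weighting with $\rho^{\pm 1/q}$ leads to the same integral \eqref{ea11} and the same choice of $p$), before letting $R\to\infty$ and then $t_1\to 0^+$.
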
 

\begin{proof} 
Keep the same notation as in the proof of Theorem \ref{tuni}. By \eqref{ea6} with $\psi=\varphi_R$,
\begin{equation}\label{ea23b}
\int_{\Rr^N} \rho(x)[ u(x, t_2)- u(x, t_1) ] \,\varphi_R\,  \dd x \,=\,
\int_{t_1}^{t_2}\int_{\Rr^N} G(u) (-\Delta)^{\sigma/2}  \varphi_R\, \dd x\,\dd t.
\end{equation}
From \eqref{ea23b} and \eqref{ea13} with $\zeta$ replaced by $G(u)$, we obtain
\begin{equation}\label{ea24}
\int_{\Rr^N} u(x, t_2) \rho(x)\dd x \,=\, \int_{\Rr^N} u(x, t_1) \rho(x)\dd x\,.
\end{equation}
Since $u\in C\big([0,\infty); L^1_{\rho}(\Rr^N)\big)$, as a consequence of \eqref{ea24}, by sending $t_1\to 0^+$ we obtain the conclusion. 
\end{proof}

\section{Fast decaying density}
\label{fast}
Let us assume now that $\rho(x)$ vanishes {\em fast} as $|x|$ diverges, that is:
\begin{equation}
\label{A2}
\tag{{\bf A}$_2$}
\begin{array}{c}
  \textrm{there exist }\check C>0, \check R>0 \textrm{ and }\alpha\in (\sigma, \infty)\textrm{ such that }\\
  \rho(x)\leq \check C  |x|^{- \alpha}\textrm{ for all } x\in \Rr^N\backslash B_{\check R}\,\,.
\end{array}
\end{equation}
Under hypothesis \eqref{A2} we shall prove existence of a solution to problem \eqref{06111} satisfying an extra condition at infinity. From this we will infer nonuniqueness of solutions to 
the same problem, if we do not specify extra conditions at infinity.  Instead, uniqueness is restored in the class of solutions satisfying a suitable decay estimate as $|x|\to\infty$.

\subsection{Preliminary results}
We shall use the following Proposition, which is shown in
\cite{Rubin}, and some consequences of it.

\begin{proposition}
\label{propR} Let  $N\geq 1, r > 1, \frac 2 r < \beta < N$. Let
\begin{equation}\label{ea50}
\beta -\frac{N}{r} < \nu< N\frac{r-1}r\,.
\end{equation}
Suppose that $f(|x|)|x|^{\nu}\in L^r(\Rr^N)$. Then there exists a
constant $C>0$ such that
\[
\left|  \int_{\Rr^N} \frac{f(|y|)}{|x-y|^{N-\beta}}\, \dd y
\right| \leq C \,\big\||x|^\nu f\big\|_{L^r(\Rr^N)}\, |x|^{\beta-\nu-\frac
N r}
\]
for almost every $x\in \Rr^N$.
\end{proposition}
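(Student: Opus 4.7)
The strategy is to use the radial symmetry of $f$ to reduce the $N$-dimensional weighted inequality to a one-dimensional estimate, and then to apply H\"older's inequality. The radial structure is essential here: a direct application of H\"older to the original integral would demand $\beta>N/r$ in order to control the kernel singularity at $y=x$, which is strictly stronger than the given hypothesis $\beta>2/r$. Taking $x=|x|e_1$ and passing to spherical coordinates $y=s\omega$, one obtains
\[
J(x):=\int_{\Rr^N}\frac{f(|y|)}{|x-y|^{N-\beta}}\,\dd y = |x|^{\beta-N}\int_0^\infty f(s)\,s^{N-1}\,\psi(s/|x|)\,\dd s,
\]
where the universal kernel $\psi(t):=\int_{S^{N-1}}|e_1-t\omega|^{\beta-N}\,\dd\sigma(\omega)$ arises from the homogeneity $|x-s\omega|^{\beta-N}=|x|^{\beta-N}\,|e_1-(s/|x|)\omega|^{\beta-N}$.

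An asymptotic analysis of $\psi$ shows that it is bounded near $t=0$, satisfies $\psi(t)\sim c\,t^{\beta-N}$ as $t\to\infty$, and near $t=1$ --- by parametrizing $\omega=(\cos\theta,\sin\theta\,\omega')$ and rescaling $\theta=|1-t|\,v$ --- behaves like $\psi(t)\lesssim|1-t|^{\beta-1}$ when $\beta<1$, is bounded when $\beta>1$, and is logarithmic when $\beta=1$.

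One then splits the one-dimensional integrand as $[f(s)\,s^{\nu+(N-1)/r}]\cdot[s^{(N-1)/r'-\nu}\,\psi(s/|x|)]$ and applies H\"older's inequality with conjugate exponents $r$ and $r'$. The first factor, raised to the power $r$ and integrated against $\dd s$, produces (up to a constant) the weighted norm $\||y|^\nu f\|^r_{L^r(\Rr^N)}$. For the second factor, the change of variable $s=|x|\,u$ yields $|x|^{N-\nu r'}$ times the universal integral $K:=\int_0^\infty u^{N-1-\nu r'}\,\psi(u)^{r'}\,\dd u$. Combining the powers of $|x|$ and taking the $(1/r')$-th root reproduces precisely the claimed exponent $\beta-\nu-N/r$.

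The main obstacle, and the point at which every hypothesis on the parameters enters, is to verify that $K<\infty$. Integrability at $u=0$ demands $\nu r'<N$, equivalent to $\nu<N(r-1)/r$; integrability at $u=\infty$ demands $(\nu+N-\beta)\,r'>N$, equivalent to $\nu>\beta-N/r$; and integrability near $u=1$ --- the delicate regime where the spheres $|y|=s$ and $|y|=|x|$ touch --- demands $(\beta-1)\,r'>-1$ when $\beta<1$, i.e.\ $\beta>1/r$, which is amply ensured by the hypothesis $\beta>2/r$. The upper bound $\beta<N$ guarantees that the Riesz kernel is locally integrable, so that $J(x)$ is well-defined in the first place.
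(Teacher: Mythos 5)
Your proof is correct, but note that the paper does not prove Proposition \ref{propR} at all: it is imported verbatim from Rubin's paper \cite{Rubin} on one-dimensional representations of Riesz potentials of radial functions, so there is no internal argument to compare against. Your reduction $J(x)=|x|^{\beta-N}\int_0^\infty f(s)s^{N-1}\psi(s/|x|)\,\dd s$ is exactly the ``one-dimensional representation'' of Rubin's title, and the rest of your argument --- the three-regime asymptotics of $\psi$ (bounded at $0$, $\sim t^{\beta-N}$ at $\infty$, $\lesssim|1-t|^{\beta-1}$ or bounded or logarithmic near $1$ according to the sign of $\beta-1$), the H\"older split with the weight $s^{\nu+(N-1)/r}$, and the scaling $s=|x|u$ producing the exponent $\beta-\nu-N/r$ --- is a complete and self-contained derivation. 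I checked the bookkeeping: the finiteness of $K$ at $u=0$ is exactly $\nu<N(r-1)/r$, at $u=\infty$ exactly $\nu>\beta-N/r$, and near $u=1$ only $\beta>1/r$ is needed, which is weaker than the stated hypothesis $\beta>2/r$; this is consistent with the paper's own Remark \ref{remR}, which likewise observes that the two inequalities in \eqref{ea50} only serve to make the potential converge at infinity and locally. Two trivial caveats you may wish to record: for $N=1$ the spherical parametrization $\omega=(\cos\theta,\sin\theta\,\omega')$ degenerates, but $S^0$ consists of two points and $\psi(t)=|1-t|^{\beta-1}+|1+t|^{\beta-1}$ explicitly, so the same bounds hold; and the bound on $\psi$ near $t=1$ should be stated uniformly on a neighborhood of $1$ (which your rescaling $\theta=|1-t|v$ does deliver) so that H\"older can be applied globally on $(0,\infty)$.
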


From Proposition \ref{propR} we deduce next
\begin{corollary}\label{corR}
Let $N\geq 2.$ Let assumptions \eqref{A0}, \eqref{A2} be
satisfied. Then
\begin{equation}\label{ea58}
\int_{\Rr^N}\frac{\rho(y)}{|x-y|^{N-\sigma}} dy \,\to \,0\quad
\textrm{as}\;\; |x|\to \infty\,.
\end{equation}
More precisely, 
\begin{itemize}
\item[i.] If $N=2$, then, for some $C>0$, there holds:
\begin{equation}\label{ea52}
\int_{\Rr^2}\frac{\rho(y)}{|x-y|^{2-\sigma}} dy\leq C
|x|^{\sigma-\nu-\frac 2 r}\quad \textrm{for all}\;\; x\in \Rr^2 
\end{equation}
provided  $0<\nu<\sigma$ and     
\begin{equation}\label{ea54}
\max\left\{\frac 2{\sigma},\frac{2}{2-\nu},
\frac{2}{\alpha-\nu}\right\}<r<\frac{2}{\sigma-\nu}.
\end{equation}
\item[ii.] If $N\ge 3$, then, for some $C>0$, there holds:
\begin{equation}\label{ea55}
\int_{\Rr^N}\frac{\rho(y)}{|x-y|^{N-\sigma}} dy\leq C
|x|^{\sigma-\frac N r}\quad \textrm{for all}\;\; x\in \Rr^N,
\end{equation}
provided 
\begin{equation}\label{ea53}
\max\left\{\frac N{\alpha}, \frac 2{\sigma} \right\} < r < \frac
N{\sigma}\,.
\end{equation} 
\end{itemize}
\end{corollary}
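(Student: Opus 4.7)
The strategy is to dominate $\rho$ by a radial function and then apply Proposition \ref{propR} with $\beta=\sigma$. Since $\rho$ is continuous on $\overline{B}_{\check R}$, it is bounded there by some constant $M>0$, and so
\[
\tilde\rho(s) \,:=\, M\,\chi_{[0,\check R]}(s)\,+\,\check C\, s^{-\alpha}\,\chi_{(\check R,\infty)}(s)
\]
is radial and majorizes $\rho$ pointwise by virtue of \eqref{A2}. Consequently it will suffice to estimate $\int_{\Rr^N}\tilde\rho(|y|)/|x-y|^{N-\sigma}\,\dd y$ through the proposition.

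The weighted integrability $\tilde\rho(|y|)|y|^\nu \in L^r(\Rr^N)$ splits into the local requirement $\nu > -N/r$ (trivial as soon as $\nu\geq 0$) and the tail requirement $\nu < \alpha - N/r$. Proposition \ref{propR} further imposes $r>2/\sigma$ and $\sigma-N/r<\nu<N-N/r$, and its conclusion produces a pointwise bound by $C\,|x|^{\sigma-\nu-N/r}$, whose exponent is automatically negative. For $N\geq 3$ the natural choice is $\nu=0$: the four inequalities on $r$ collapse to $\max\{N/\alpha,2/\sigma\}<r<N/\sigma$, which is precisely \eqref{ea53}, and the resulting decay rate is exactly \eqref{ea55}, yielding \eqref{ea58} in this case.

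When $N=2$, however, $\nu=0$ becomes inadmissible, because the hypothesis $r>2/\sigma$ of Proposition \ref{propR} clashes with the condition $r<N/\sigma=2/\sigma$ that would be needed to make the exponent $\sigma-N/r$ negative. The remedy is to take some $\nu\in(0,\sigma)$; the four conditions on $r$ then rewrite exactly as $\max\{2/\sigma,2/(2-\nu),2/(\alpha-\nu)\}<r<2/(\sigma-\nu)$, i.e.\ \eqref{ea54}. A short check exploiting $\nu>0$, $\sigma<2$ and $\sigma<\alpha$ (the latter from \eqref{A2}) confirms that this interval of admissible $r$ is non-empty, which gives the pointwise decay \eqref{ea52}. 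The only genuinely delicate point of the argument is thus this obstruction in the $N=2$ case, which forces the introduction of a strictly positive weight $\nu$ in order to balance simultaneously the singular-integral constraint $r>2/\sigma$ and the decay requirement $\nu+N/r>\sigma$.
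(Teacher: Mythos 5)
Your proposal is correct and follows essentially the same route as the paper: dominate $\rho$ by the radial majorant $\tilde\rho$ built from \eqref{A2} and apply Proposition \ref{propR} with $\beta=\sigma$, taking $\nu=0$ when $N\geq 3$ and $\nu\in(0,\sigma)$ when $N=2$. Your explicit explanation of why $\nu=0$ is obstructed in dimension two (the clash between $r>2/\sigma$ and $r<N/\sigma$) is a point the paper leaves implicit, but the argument is the same.
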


\begin{proof}
Observe that by \eqref{A2},
\begin{equation}
\label{ea20}
0<\rho(x)\leq \tilde \rho(x) \quad \textrm{for all}\;\; x\in \Rr^N,
\end{equation}
where
\[
\tilde \rho(x):=
\begin{cases}
\max_{\overline B_{\check R}} \rho &\textrm{if }x\in B_{\check R}, \\ \\
\check C|x|^{-\alpha} & \textrm{if } x\in \Rr^N\backslash B_{\check R}.
\end{cases}
\]
$i.$  Let $N=2$. Take $0<\nu<\sigma$. Observe that $|x|^\nu \tilde
\rho(|x|)\in L^r(\Rr^2)$ whenever $r(\alpha-\nu)>2$. Furthermore,
we can find $r>1$ such that \eqref{ea54} is verified.
This permits to apply Proposition \ref{propR} (with
$\beta=\sigma$) to infer that  
\[
\int_{\Rr^2}\frac{\tilde \rho(y)}{|x-y|^{2-\sigma}}\, \dd y \leq C \,\bigl\|| |x|^\nu \tilde \rho\bigr\|_{L^r(\Rr^2)}\, |x|^{\beta-\nu-\frac
2 r}
\]
Then, by \eqref{ea20}, \eqref{ea52} is verified.

\smallskip

$ii.$ Now, let $N\geq 3.$ Clearly, $\tilde \rho\in L^r(\Rr^N)$ whenever $\alpha \,r >N$.
Furthermore,  we can select $r>1$
such that \eqref{ea53} is verified. This
combined with Proposition \ref{propR} (with $\beta=\sigma, \nu=0$)
implies
\[
\int_{\Rr^N}\frac{\tilde \rho(y)}{|x-y|^{N-\sigma}}\, \dd y \leq C \,\|| \tilde \rho\|_{L^r(\Rr^N)}\, |x|^{\beta-\frac
N r}
\]
for almost every $x\in \Rr^N$. Then, by \eqref{ea20}, \eqref{ea55} holds true.
This completes the proof.
\end{proof}

\bigskip
\begin{remark}\label{remR}
Let us note that the first and the second inequality in condition \eqref{ea50} in Proposition \ref{propR}
are only used to deduce that
$\int_{\Rr^N\setminus B_1}\frac{f(|y|)}{|y|^{N-\beta}} dy<\infty$, and $f\in L^1_{loc}(\Rr^N)$, respectively. 
Indeed, such conditions implies that for every $x\in \Rr^N, \;$ the singular integral $\int_{\Rr^N}\frac{f(|y|)}{|x-y|^{N-\beta}} dy$ converges.
So, if $f\in L_{loc}^1(\Rr^N)$ by hypothesis, the thesis of Proposition \ref{propR} remains true without requiring the second inequality in \eqref{ea50}.
\end{remark}

\medskip

For further purposes, let us discuss another application of
Proposition \ref{propR}, combined with Remark \ref{remR}. To be specific, we will take 
$r>1$ and
\begin{equation}\label{ea56}
\nu > N\frac{r-1}{r}\,.
\end{equation}
The role of this condition will be clear in the sequel (see Theorem \ref{texi3} below). 

\smallskip

\begin{corollary}\label{cor2R}
Let $N\geq 2.$ Let assumption \eqref{A0} be satisfied. Moreover,
suppose that
\begin{equation}
\label{A2b} \tag{{\bf A}$_2^*$}
\begin{array}{c}
  \textrm{there exist}\; \check C>0, \check R>0 \; \textrm{and}\; \alpha\in (N, \infty) \; \textrm{such that}\\
  \rho(x)\leq \check C  |x|^{- \alpha}\quad \textrm{for all}\;\, x\in \Rr^N\backslash B_{\check R}\,\,.
\end{array}
\end{equation}
Then \eqref{ea58} is satisfied. More precisely, for some $C>0$, we have:
\begin{equation}\label{ea60}
\int_{\Rr^N}\frac{\rho(y)}{|x-y|^{N-\sigma}} dy\leq C
|x|^{\sigma-\nu-\frac N r}\quad \textrm{for all}\;\; x\in \Rr^N,
\end{equation}
provided $\frac N2 (2-\sigma)<\nu<N$ and \begin{equation}\label{ea59}
\max\left\{\frac 2{\sigma},\frac N{\alpha-\nu}\right\}<r<\frac
N{N-\nu}\,.
\end{equation} 
\end{corollary}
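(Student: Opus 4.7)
The plan is to mimic the strategy used in the proof of Corollary \ref{corR}, but now with $\nu$ chosen in the range where the upper bound in \eqref{ea50} fails; this is why we shall invoke Remark \ref{remR} to bypass that inequality.

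First I would replace $\rho$ by the explicit majorant
\[
\tilde\rho(x):=\begin{cases}\max_{\overline B_{\check R}}\rho & x\in B_{\check R},\\ \check C\,|x|^{-\alpha} & x\in \Rr^N\setminus B_{\check R},\end{cases}
\]
so that $0<\rho\le\tilde\rho$ on $\Rr^N$. Since $\tilde\rho$ is bounded and $\rho \in C(\Rr^N)$ by \eqref{A0}--(i), we have $\tilde\rho \in L^1_{\mathrm{loc}}(\Rr^N)$. Next I would check that $|x|^{\nu}\tilde\rho(|x|)\in L^r(\Rr^N)$: near the origin $|x|^{\nu r}$ is integrable since $\nu>0$; in the tail, integrability of $|x|^{(\nu-\alpha)r}$ reduces to $(\alpha-\nu)r>N$, i.e.\ $r>N/(\alpha-\nu)$, which is one of the conditions in \eqref{ea59}.

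Then I would verify the hypotheses of Proposition \ref{propR} with $\beta=\sigma$ and $f=\tilde\rho$. The requirement $r>1$ and $2/r<\sigma<N$ is satisfied because $r>2/\sigma$ by \eqref{ea59} and because $\sigma<2\le N$ (here $N\ge 2$ is crucial). Concerning condition \eqref{ea50}, the lower bound $\sigma-N/r<\nu$ is easily checked from $r<N/(N-\nu)$ together with $\sigma\le N$. The upper bound $\nu<N(r-1)/r$, however, is precisely violated by \eqref{ea56}; it is at this point that Remark \ref{remR} enters: since $\tilde\rho\in L^1_{\mathrm{loc}}(\Rr^N)$, that inequality can be dropped and Proposition \ref{propR} still applies. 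Consequently
\[
\int_{\Rr^N}\frac{\tilde\rho(y)}{|x-y|^{N-\sigma}}\,\dd y \le C\,\big\||x|^{\nu}\tilde\rho\big\|_{L^r(\Rr^N)}\,|x|^{\sigma-\nu-\frac{N}{r}},
\]
and the desired bound \eqref{ea60} follows from $\rho\le\tilde\rho$. Finally, I would note that the conditions $\nu>N(2-\sigma)/2$ and $\alpha>N$ are exactly what make the range \eqref{ea59} for $r$ nonempty: comparing the left and right endpoints one gets, respectively, $2(N-\nu)<N\sigma$ and $N-\nu<\alpha-\nu$. The decay $\sigma-\nu-N/r<0$ (which uses $r<N/(N-\nu)$ and $\sigma<N$) then yields \eqref{ea58}.

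The main technical obstacle is merely bookkeeping: correctly tracking the interplay between the parameters $\nu$, $r$, $\sigma$, $\alpha$ and $N$, and recognizing that the novelty with respect to Corollary \ref{corR} is precisely that we now force $\nu$ to be large enough to violate the right-hand inequality in \eqref{ea50}, relying on Remark \ref{remR} to compensate. No new singular integral analysis is needed beyond Proposition \ref{propR}.
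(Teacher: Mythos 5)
Your proposal is correct and follows essentially the same route as the paper: majorize $\rho$ by the radial function $\tilde\rho$, verify $|x|^{\nu}\tilde\rho\in L^r(\Rr^N)$ under $(\alpha-\nu)r>N$, invoke Proposition \ref{propR} with $\beta=\sigma$ while using Remark \ref{remR} and $\tilde\rho\in L^1_{\mathrm{loc}}(\Rr^N)$ to discard the violated upper bound in \eqref{ea50}, and check that $\nu>\tfrac N2(2-\sigma)$ and $\alpha>N$ make the window \eqref{ea59} nonempty. The parameter bookkeeping (including the verification of the lower bound $\sigma-N/r<\nu$ and of the negativity of the exponent $\sigma-\nu-N/r$) matches the paper's argument.
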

Observe that \eqref{ea56} is guaranteed by \eqref{ea59}. Moreover, the conclusion of Corollary \ref{cor2R}, valid for $\alpha>N$, is similar to that of Corollary \ref{corR}, where we had $\alpha>\sigma$.

\begin{proof}
Note that
$|x|^{\nu}\tilde \rho\in L^r(\Rr^N)$ whenever $(\alpha-\nu)r>N$. Moreover, $\tilde \rho\in L^1_{loc}(\Rr^N)$; hence,  in view
of Remark \ref{remR}, in order to apply Proposition \ref{propR} (with
$\beta=\sigma$) when condition \eqref{ea56} is satisfied,  we have to find $r>1$ and $\nu>0$ such that
\[
\begin{cases}
r>\frac 2{\sigma}\\
r(\sigma-\nu)<N \\
 r(N-\nu)<N \\
 r(\alpha-\nu)>N.
\end{cases}
\]
Since $0<\sigma<N$, this is equivalent to
\begin{equation}
\label{ea57}
\begin{cases} 
r>\frac 2{\sigma}\\
r(N-\nu)<N \\
r(\alpha-\nu)>N.
\end{cases}
\end{equation}
If we assume that $\frac N2 (2-\sigma)<\nu<N$ and $\alpha>N$, then
we can find $r>1$ such that \eqref{ea59} is verified. 
This implies \eqref{ea57}, hence the thesis follows. 
\end{proof}

\subsection{Existence of solutions satisfying an extra condition at infinity}

Fix any $\tau\geq 0.$ Using Corollary \ref{corR} we shall prove the existence of a solution $u$ to problem \eqref{06111} satisfying at infinity 
the following extra condition:
\begin{multline}
\label{ea21}
U(x,t):= \int_\tau^t G\big(u(x,s) \big) \dd s \to 0 \quad \textrm{as } |x|\to \infty, \\ 
\textrm{uniformly with respect to}\; t>\tau\,,
\end{multline}
where the function $G$ is defined by \eqref{ea22}. Furthermore, we will compute the decay rate of $U(x,t)$ as $|x|\to\infty$, uniformly with respect to $t>\tau$.

This is the content of next 
\begin{theorem}\label{texi2}
Let $N\geq 2.$ Let assumptions \eqref{A0}, \eqref{A2} be satisfied. Then there exists a bounded solution $u$ to problem \eqref{06111} such that condition \eqref{ea21} holds. More precisely,
\begin{itemize}
\item[i.] if $N=2$, then, for some $C>0$, we have:
\begin{multline}
\label{ea63}
U(x,t) \leq 2\|u_0\|_{\infty} \int_{\Rr^2} \frac{\rho(y)}{|x-y|^{2-\sigma}}\, \dd y \leq   C |x|^{\sigma-\nu-\frac 2 r}\\ 
\textrm{for almost every}\;\; x\in \Rr^2\setminus B_{\bar R}, \, t>\tau,
\end{multline}
provided $\bar R>0, 0<\nu<\sigma$ and
\eqref{ea54} is verified. 

\item[ii.] If $N\ge 3$, then, for some $C>0$, we have:
\begin{multline}\label{ea64}
U(x,t) \leq 2\|u_0\|_{\infty} \int_{\Rr^N} \frac{\rho(y)}{|x-y|^{N-\sigma}}\, \dd y \leq C |x|^{\sigma-\frac N r}\\ 
\textrm{for almost every}\;\; x\in
\Rr^N\setminus B_{\bar R}, \, t>\tau,
\end{multline}
provided $\bar R>0$ and \eqref{ea53} is verified.
\end{itemize}
\end{theorem}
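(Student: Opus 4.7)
The plan is to show that the minimal bounded solution constructed in Theorem~\ref{02011} automatically satisfies the decay condition \eqref{ea21}. The key idea is to integrate the PDE in time to eliminate $\partial_t u$, and then compare the resulting time integral with the Riesz potential of $\rho$. Corollary~\ref{corR} furnishes the quantitative pointwise bounds on this Riesz potential needed to produce \eqref{ea63} and \eqref{ea64}.

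Let $u$ denote the minimal solution of Theorem~\ref{02011} and set $U(x,t):=\int_\tau^t G(u(x,s))\,ds$. Integrating the weak formulation of \eqref{06111} over $s\in[\tau,t]$, with test functions of the form $\psi(x)f_n(t)$ where $f_n$ approximates $\chi_{[\tau,t]}$ exactly as in the proof of Theorem~\ref{tuni}, yields
\[
(-\Delta)^{\sigma/2}U(x,t)\,=\,\rho(x)\bigl[u(x,\tau)-u(x,t)\bigr]\quad\text{in }\Rr^N,
\]
whose right-hand side is bounded in absolute value by $2\|u_0\|_\infty\,\rho(x)$ thanks to \eqref{ea40}. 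Introduce the Riesz potential
\[
W(x)\,:=\,2\|u_0\|_\infty\,a_{N,\sigma}\int_{\Rr^N}\frac{\rho(y)}{|x-y|^{N-\sigma}}\,dy,
\]
with $a_{N,\sigma}>0$ chosen so that $(-\Delta)^{\sigma/2}W=2\|u_0\|_\infty\,\rho$ on $\Rr^N$; by Corollary~\ref{corR}, $W$ is bounded and vanishes as $|x|\to\infty$. A comparison at the level of the approximating Cauchy-Dirichlet solutions $u_R$ of Proposition~\ref{03012} then yields $U_R\leq W$, and passing to the monotone limit $R\to\infty$, as in the proof of Theorem~\ref{02011}, delivers $U\leq W$ on $\Rr^N\times(\tau,\infty)$. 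The quantitative estimates \eqref{ea52} and \eqref{ea55} of Corollary~\ref{corR} translate this bound into \eqref{ea63} for $N=2$ and \eqref{ea64} for $N\geq 3$.

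The main technical obstacle lies in the comparison step: one must reconcile the spectral fractional Laplacian used to define $u_R$ on $B_R$ with the integral/pointwise $(-\Delta)^{\sigma/2}$ used on all of $\Rr^N$, and apply an appropriate maximum principle. The zero extension of $U_R$ outside $B_R$ ensures $W-U_R\geq 0$ there, while $(-\Delta)^{\sigma/2}(W-U_R)\geq 0$ inside $B_R$; a fractional maximum principle in this two-piece setting closes the argument. The weak time-integration and the monotone passage to the limit in $R$ follow the machinery already developed in Sections~\ref{existence} and~\ref{slow}.
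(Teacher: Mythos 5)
Your overall strategy coincides with the paper's: integrate the equation in time to turn $U(x,t)=\int_\tau^t G(u(x,s))\,\dd s$ into the solution of an elliptic problem with right-hand side $\rho\,[u(\cdot,\tau)-u(\cdot,t)]$, bound that right-hand side by $2\|u_0\|_\infty\rho$, compare with the Riesz potential of $\rho$, and invoke Corollary \ref{corR} for the quantitative decay. The skeleton is correct, and you have correctly located the delicate point. However, the step you defer to ``a fractional maximum principle in this two-piece setting'' contains a genuine gap as written. The approximating solutions $u_R$ of \eqref{02013} are defined through the \emph{spectral} fractional Laplacian on $B_R$ (eigenfunction expansion of the Dirichlet Laplacian), whereas your supersolution $W$ is governed by the \emph{integral} operator \eqref{ea1} on all of $\Rr^N$. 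If you extend $U_R$ by zero outside $B_R$ and apply the full-space integral operator to $W-U_R$ inside $B_R$, you do not obtain $2\|u_0\|_\infty\rho-\rho[u_R(\cdot,\tau)-u_R(\cdot,t)]$: the restricted and spectral fractional Laplacians of $U_R$ differ, so the sign of $(-\Delta)^{\sigma/2}(W-U_R)$ in $B_R$ is not established and the comparison does not close as stated.

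The paper avoids this operator mismatch by not using a supersolution at all: it represents $U_R(x,t)=\int_{B_R}K_R(x,y)\,\rho(y)\,[u_R(y,\tau)-u_R(y,t)]\,\dd y$ through the Green function $K_R$ of the spectral Dirichlet problem, and then uses the kernel domination $0<K_R(x,y)\le K(x,y)=|x-y|^{-(N-\sigma)}$ together with $K_R\to K$ and dominated convergence (the majorant $2\|u_0\|_\infty K(x,\cdot)\rho(\cdot)$ is integrable by \eqref{A2}) to pass to the limit and obtain the identity \eqref{ea23} for $U$, from which the first inequalities in \eqref{ea63}--\eqref{ea64} are immediate. The kernel bound $K_R\le K$ is exactly the correct quantitative substitute for your two-piece maximum principle (it follows, e.g., by subordination from the domination of the Dirichlet heat kernel by the free heat kernel), so your argument can be repaired by replacing the supersolution comparison with this Green-function representation; without it, the proof is incomplete. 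A minor further point: your $W$ carries the normalization constant $a_{N,\sigma}$, which is harmless for the decay rate but means your bound is not literally the first inequality claimed in \eqref{ea63} and \eqref{ea64}.
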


Clearly, \eqref{ea63} and \eqref{ea64} hold true in the whole $\Rr^N$;
however, they are not useful in $B_{\bar R}$. Indeed, while the right hand sides goes to infinity for $|x|\to 0$, we know that $U$ is bounded.

\begin{proof}
For every $R>0$ let $u_R$ be the unique solution to problem \eqref{02013}. Recall that, by Proposition \ref{03012},
\begin{equation}\label{ea28}
0\leq u_R \leq \|u_0\|_{\infty} \quad \textrm{in}\;\; B_R\times (0,\infty)\,.
\end{equation}
Arguing as in Theorem \ref{02011}, it can be proved that $\displaystyle u=\lim_{R\to \infty} u_R$ solves \eqref{06111}. It remains to show \eqref{ea21}.
Define  
\[
U_R(x,t):=\int_\tau^t G\big(u_R(x,s) \big) \dd s\quad (x\in \Rr^N, \, t>\tau)
\]
and notice that $U_R\to U$ as $R\to \infty$ in $\Rr^N\times (\tau,\infty)$, where $U$ in defined in \eqref{ea21}.
It is direct to see that for every $R>0, t>\tau$ the function $U_R(\cdot, t)$ is a solution to problem 
\[
  \begin{cases}
  (- \Delta)^{\frac{\sigma}{2}} U_R = \rho[u_0 - u_R(\cdot, t)] & x\in B_R\\ 
      U = 0 & x\in \partial B_R\,.
    \end{cases}
\]
So, 
\[U_R(x,t)= \int_{B_R} K_R(x,y) \rho(y) [u(y,\tau) - u_R(y,t)] \dd y\quad (x\in B_R, \, t>\tau),\]
where $K_R$ is the Green function for equation $(-\Delta)^{\sigma/2} U=0$ in $B_R$, completed with Dirichlet zero boundary conditions. 
It is easily seen that 
\begin{multline}
\label{ea25}
\lim_{R\to \infty} U_R(x,t) = \int_{\Rr^N} K(x,y) \rho(y) [u(y,\tau) - u(y,t)] \dd y\\ \textrm{for any}\;\;  x\in \Rr^N, t>\tau,
\end{multline}
where 
\[
K(x,y):=\frac 1 {|x-y|^{N-\sigma}} \qquad (x,y\in \Rr^N, \, x\neq y)
\]
is the {\it Riesz Kernel}. In fact, there hold 
\begin{equation}\label{ea26}
0<K_R(x,y) \leq K(x,y) \quad \textrm{for any}\;\; x,y\in B_R, \, x\neq y,
\end{equation}
and 
\begin{equation}\label{ea27}
\lim_{R\to\infty} K_R(x,y) = K(x,y)\quad \textrm{for any}\;\; x,y\in \Rr^N, \,x\neq y.
\end{equation}

From \eqref{A0}, \eqref{ea28} and \eqref{ea26}, we have for all $x,y\in B_R$, $x\neq y$, $t>\tau$, 
\begin{equation}
\label{ea29}  
\Big| K_R(x,y)\rho(y)[u_R(y,\tau)- u_R(y,t)] \Big| \leq 2 \|u_0\|_{\infty} K(x,y) \rho(y)\,. 
\end{equation}
Now, fix any $x\in \Rr^N$, $t>\tau$. Note that, in view of \eqref{A2}, the function $y\mapsto 2\|u_0\|_\infty K(x,y) \rho(y)$ belongs to $L^1(\Rr^N)$. Thus, from \eqref{ea27} and the 
dominated convergence theorem the limit \eqref{ea25} follows. Hence 
\begin{equation}\label{ea23}
U(x,t)= \int_{\Rr^N} K(x,y) \rho(y) [u(y, \tau) - u(y,t)] \dd y\qquad (x\in \Rr^N, \, t>\tau)\,.
\end{equation}

From \eqref{ea23} and Corollary \ref{corR} we get \eqref{ea63} and \eqref{ea64}, which in turn imply \eqref{ea21}. This completes the proof. 
\end{proof}

From the proof of Theorem \ref{texi2} and Corollary \ref{cor2R} we obtain the following result. 
\begin{theorem}\label{texi3}
Let assumptions of Theorem \ref{texi2} be satisfied with \eqref{A2} replaced by
\eqref{A2b}. The the conclusion of Theorem \ref{texi2} remains true, with \eqref{ea63} and \eqref{ea64} replaced by the following estimate: 
\begin{multline}\label{ea65}
U(x,t)\leq C |x|^{\sigma-\nu-\frac N r} \textrm{ for almost every } x\in \Rr^N\setminus B_{\bar R}, \, t>\tau, \text{ for some $C>0$, } \\
\text{ provided} \;\bar R>0, \frac N2 (2-\sigma)<\nu<N\; \textrm{and}\; \eqref{ea59} \;\textrm{is verified.} 
\end{multline}
\end{theorem}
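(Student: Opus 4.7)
The plan is to follow the proof of Theorem \ref{texi2} essentially verbatim up to the representation formula \eqref{ea23}, and then swap out Corollary \ref{corR} for Corollary \ref{cor2R} to obtain the stronger decay rate allowed by the stronger hypothesis \eqref{A2b}. Since \eqref{A2b} is a strengthening of \eqref{A2} (now $\alpha > N$ rather than $\alpha > \sigma$), every step in the proof of Theorem \ref{texi2} that relied on \eqref{A2} remains valid. In particular, the approximating solutions $u_R$ of \eqref{02013} still converge to a bounded solution $u$ of \eqref{06111}; the integrated quantity $U_R(\cdot,t)$ still solves the fractional Poisson problem in $B_R$ with right-hand side $\rho[u_0 - u_R(\cdot,t)]$ and zero exterior data; and the representation of $U_R$ through the Dirichlet Green function $K_R$ still passes to the limit via the Riesz kernel $K(x,y) = |x-y|^{-(N-\sigma)}$, yielding
\[
U(x,t)= \int_{\Rr^N} K(x,y)\, \rho(y)\, [u(y,\tau) - u(y,t)]\, \dd y
\qquad (x\in \Rr^N,\; t>\tau).
\]

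Combining this with $0\le u \le \|u_0\|_\infty$ gives the pointwise bound
\[
U(x,t) \;\le\; 2\|u_0\|_\infty \int_{\Rr^N} \frac{\rho(y)}{|x-y|^{N-\sigma}}\, \dd y,
\]
exactly as in \eqref{ea63}--\eqref{ea64}. At this point the only input that changes is the estimate of the convolution $\int_{\Rr^N} \rho(y) |x-y|^{\sigma - N}\, \dd y$. Under \eqref{A2b} together with the ranges $\frac{N}{2}(2-\sigma) < \nu < N$ and \eqref{ea59}, Corollary \ref{cor2R} supplies precisely the bound
\[
\int_{\Rr^N} \frac{\rho(y)}{|x-y|^{N-\sigma}}\, \dd y \;\le\; C\,|x|^{\sigma - \nu - N/r}
\qquad (x \in \Rr^N),
\]
and inserting this into the preceding inequality yields \eqref{ea65}.

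It remains to check that \eqref{ea65} implies the extra condition \eqref{ea21}, i.e.\ that the exponent $\sigma - \nu - N/r$ is negative. From the upper bound $r < N/(N-\nu)$ in \eqref{ea59} we read off $\nu + N/r > N$, and since $\sigma < 2 \le N$ for $N\ge 2$, we conclude $\sigma - \nu - N/r < \sigma - N < 0$, so $U(x,t) \to 0$ uniformly in $t > \tau$ as $|x| \to \infty$. No genuinely new obstacle appears: all the work has already been done in setting up Corollary \ref{cor2R}, which via Remark \ref{remR} allows Proposition \ref{propR} to be applied in the regime $\nu > N(r-1)/r$ — the precise regime triggered by the faster decay $\alpha > N$ of $\rho$. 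The only mildly delicate point is book-keeping the admissible ranges of $\nu$ and $r$ in \eqref{ea59} to confirm that they are nonempty (which is guaranteed by $\frac{N}{2}(2-\sigma) < N$ and $\alpha > N$), but this is immediate.
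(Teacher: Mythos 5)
Your proposal is correct and follows exactly the route the paper takes: the paper's proof of Theorem \ref{texi3} consists precisely of rerunning the proof of Theorem \ref{texi2} up to the representation formula \eqref{ea23} and then invoking Corollary \ref{cor2R} in place of Corollary \ref{corR}. Your additional checks (that \eqref{A2b} implies \eqref{A2}, that the exponent $\sigma-\nu-\frac{N}{r}$ is negative, and that the parameter ranges in \eqref{ea59} are nonempty) are all accurate and only make explicit what the paper leaves implicit.
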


Let us recall that solutions constructed in Theorem \ref{texi2} and \ref{texi3} are minimal. 

\subsection{Nonuniqueness of solutions}
From Theorem \ref{texi2} we infer next nonuniqueness of solutions to problem \eqref{06111}. 

\begin{corollary}\label{cornu1}
Let $N\geq 2$. Let assumptions \eqref{A0}, \eqref{A2} be satisfied. Let $\rho\in L^1(\Rr^N)$. Then, for $u_0\equiv c\in (0,\infty)$, problem \eqref{06111} admits at least two bounded solutions.
\end{corollary}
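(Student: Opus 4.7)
The plan is to exhibit two distinct bounded solutions of problem \eqref{06111} corresponding to the initial datum $u_0\equiv c$. Since $\rho\in L^1(\Rr^N)$, the constant $c$ automatically lies in $L^+_\rho(\Rr^N)\cap L^\infty(\Rr^N)$, so assumption \eqref{A0} is in force and both Theorem \ref{02011} and Theorem \ref{texi2} are applicable. The two solutions will be the constant function $u_1(x,t)\equiv c$ and the minimal solution $u_2$ furnished by Theorem \ref{texi2}.

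First I would verify that $u_1\equiv c$ is a solution in the sense of Definition \ref{defsol}. Trivially $u_1\in C([0,\infty);L^1_\rho(\Rr^N))$. Since $u_1^m\equiv c^m$ is constant, the singular integral representation \eqref{ea1} gives $(-\Delta)^{\sigma/2}(c^m)\equiv 0$, and hence also $(-\Delta)^{\sigma/4}(c^m)\equiv 0$; therefore the right-hand side of \eqref{02016} vanishes, while the left-hand side vanishes by integration by parts in $t$ on the compact temporal support of any admissible test function $\psi$. Next I would let $u_2$ be the bounded minimal solution produced by Theorem \ref{texi2} with the same initial datum $c$; taking $\tau=0$ in \eqref{ea21}, the associated function $U(x,t):=\int_0^t G(u_2(x,s))\,\dd s$ tends to $0$ as $|x|\to\infty$, uniformly in $t>0$.

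It then remains to show $u_1\not\equiv u_2$, and this is essentially automatic: if we had $u_2(x,t)=c$ almost everywhere, then $U(x,t)=c^m\,t$, which for each $t>0$ is a strictly positive constant in $x$ and therefore cannot vanish at infinity, contradicting \eqref{ea21}. The only slightly delicate point in the argument is the verification that the constant is admissible as a weak energy solution, which is handled by the pointwise identity $(-\Delta)^{\sigma/2}(c^m)=0$ read off from \eqref{ea1}; everything else is a direct appeal to Theorems \ref{02011} and \ref{texi2}.
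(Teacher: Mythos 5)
Your proposal is correct and follows the paper's proof essentially verbatim: the constant $c$ is admissible since $\rho\in L^1(\Rr^N)$ forces $c\in L^\infty(\Rr^N)\cap L^+_\rho(\Rr^N)$, the constant itself is one solution, and the minimal solution from Theorem \ref{texi2} is a second one because its decay property \eqref{ea21} is incompatible with $U(x,t)=c^m t$. Your explicit verification that the constant solves \eqref{02016} and your contradiction argument for distinctness merely spell out steps the paper labels as ``clearly.''
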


\begin{proof}
Note that since $\rho\in L^1(\Rr^N)$, $u_0\equiv c\in L^\infty(\Rr^N)\cap L^1_{\rho}(\Rr^N).$ Clearly, $\tilde u \equiv c$ is a solution 
to problem \eqref{06111}. Moreover, by Theorem \ref{texi2} there exists a solution $u$ to problem \eqref{06111} satisfying condition \eqref{ea23}, so $u\not\equiv \tilde u$.
This completes the proof. 
\end{proof}  

\bigskip

Let us mention that if we only assume $u_0\in L^{\infty}(\Rr^N)$, then it is easy to verify  (see, $e.g.$ \cite{PT1}) that problem \eqref{06111} admits a {\em very weak} bounded solution $u$, in the sense that 
$u\in L^\infty(\Rr^N\times(0,\infty))$ satisfies  
\[
\int_0^T \int_{\Rr^N} \rho\, u\, \partial_t \psi \dd x\, \dd t =  - \int_0^T \int_{\Rr^N}u^m\,
(-\Delta)^{\sigma/2}\psi \, \dd x\, \dd t\,
\]
for any $T>0$ and any test function $\psi\in C_0^\infty(\Rr^N\times [0,T])$.
Thus next nonuniqueness result for very weak solutions immediately follows, without supposing $\rho\in L^1(\Rr^N)$. 
\begin{corollary}\label{cornu2}
Let $N\geq 2.$ Let assumptions \eqref{A0}, \eqref{A2} be satisfied. Then, for $u_0\equiv c\in (0,\infty)$, problem \eqref{06111} admits at least two very weak solutions.
\end{corollary}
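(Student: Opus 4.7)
The plan is to exhibit two distinct very weak solutions. First, the constant function $\tilde u\equiv c$ is clearly a very weak solution to \eqref{06111} with $u_0\equiv c$: indeed $\partial_t\tilde u\equiv 0$ and $(-\Delta)^{\sigma/2}c^m\equiv 0$, so both sides of the very weak identity vanish for every test function $\psi$. The substance of the proof is therefore the construction of a second, distinct, very weak solution. The difficulty is that $u_0\equiv c$ need not belong to $L^1_{\rho}(\Rr^N)$ when $\rho\notin L^1(\Rr^N)$, so Theorem \ref{texi2} is not directly applicable; I would overcome this by a monotone approximation from below.

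For each $k\in\Nn$ set $u_{0,k}:=c\,\chi_{B_k}$, which is bounded and, being compactly supported, belongs to $L^+_{\rho}(\Rr^N)$. Applying Theorem \ref{texi2} (with $\tau=0$) produces a minimal bounded solution $u_k$ to \eqref{06111} with initial datum $u_{0,k}$, satisfying $0\leq u_k\leq c$ by \eqref{ea40}, and such that
\[
U_k(x,t):=\int_0^t u_k(x,s)^m\,\dd s
\]
obeys the decay estimate \eqref{ea63} (if $N=2$) or \eqref{ea64} (if $N\geq 3$). The constant appearing there depends only on $\|u_{0,k}\|_{\infty}\leq c$ and on $\rho$, hence is \emph{independent of} $k$. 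Since $u_{0,k}\leq u_{0,k+1}$ almost everywhere, Proposition \ref{propcontraz} applied to $u_k$ and $u_{k+1}$ gives
\[
\int_{\Rr^N}\bigl[u_k(x,t)-u_{k+1}(x,t)\bigr]_+\rho(x)\,\dd x\leq \int_{\Rr^N}\bigl[u_{0,k}-u_{0,k+1}\bigr]_+\rho\,\dd x=0,
\]
so $u_k\leq u_{k+1}$ almost everywhere, and the pointwise monotone limit $u:=\lim_{k\to\infty}u_k$ exists and satisfies $0\leq u\leq c$.

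Next I would pass to the limit in the very weak formulation. For any $T>0$ and any test function $\psi\in C^{\infty}_0(\Rr^N\times[0,T])$, the uniform bounds $u_k\leq c$ and $u_k^m\leq c^m$ together with the compact support of $\psi$ permit dominated convergence in the very weak identity for $u_k$, producing the same identity for $u$. The initial condition $u(\cdot,0)\equiv c$ follows from $u_{0,k}\to c$ pointwise almost everywhere. Finally, monotone convergence gives $U_k(x,t)\nearrow U(x,t):=\int_0^t u(x,s)^m\,\dd s$; since the decay estimate from Theorem \ref{texi2} is $k$-uniform, it is inherited by $U$, and in particular $U(x,t)\to 0$ as $|x|\to\infty$ for each fixed $t>0$. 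Were $u$ identically equal to $c$, then $U(x,t)\equiv c^m t$, contradicting this decay at any $t>0$. Hence $u\not\equiv\tilde u$, providing two distinct very weak solutions.

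The main technical concern is the uniformity in $k$ of the decay estimate produced by Theorem \ref{texi2} (which holds because its constants depend only on $\|u_{0,k}\|_\infty\leq c$ and on $\rho$) and the verification that the pointwise monotone limit $u$ still satisfies the very weak formulation with the correct initial datum; monotone and dominated convergence make these steps routine.
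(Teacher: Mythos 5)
Your proof is correct in substance, but it takes a more explicit route than the paper, which gives essentially no argument here: it simply observes that for merely bounded $u_0$ a very weak bounded solution exists (citing the construction of \cite{PT1}, i.e.\ the direct limit of the Dirichlet approximations $u_R$ on balls $B_R$ with datum $u_0\equiv c$), and that this solution inherits the decay \eqref{ea21} by the very same Riesz-kernel estimate of Theorem \ref{texi2}, hence cannot coincide with the constant. You instead truncate the \emph{datum}, setting $u_{0,k}=c\,\chi_{B_k}\in L^\infty\cap L^+_\rho$ so that Theorem \ref{texi2} applies verbatim to each $u_k$, and then pass to a monotone limit using Proposition \ref{propcontraz} for the ordering and the $k$-uniformity of the constants in \eqref{ea63}--\eqref{ea64} (which indeed depend only on $\|u_{0,k}\|_\infty\le c$ and on $\rho$) to rule out $u\equiv c$. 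This buys self-containedness --- you never need to re-run the existence construction for non-integrable data --- at the price of an extra approximation layer. Two small points deserve more care. First, in passing to the limit in the identity $\int\!\!\int\rho\,u_k\,\partial_t\psi = -\int\!\!\int u_k^m(-\Delta)^{\sigma/2}\psi$, the factor $(-\Delta)^{\sigma/2}\psi$ is \emph{not} compactly supported, so "compact support of $\psi$" is not the right justification on the right-hand side; what you need is that $(-\Delta)^{\sigma/2}\psi\in L^1(\Rr^N)$, which follows from the decay $|(-\Delta)^{\sigma/2}\psi(x)|\lesssim (1+|x|^{N+\sigma})^{-1}$ as in \eqref{ea4}. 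Second, the attainment of the initial datum by the limit $u$ is not quite "routine" from pointwise convergence of $u_{0,k}$ alone; the clean argument is to use monotonicity, namely $c\,\chi_{B_k}=\lim_{t\to 0^+}u_k(\cdot,t)\le \liminf_{t\to 0^+}u(\cdot,t)\le c$ for every $k$ (the first limit being in $C([0,\infty);L^1_\rho)$), which forces $u(\cdot,t)\to c$ locally. With these repairs your argument is complete and reaches the same conclusion by the same underlying mechanism (the decay furnished by Corollary \ref{corR}).
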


\subsection{Uniqueness of solutions satisfying a decay estimate at infinity}
\begin{theorem}\label{tunc} Let $N\geq 1$.
Let assumptions \eqref{A0}, \eqref{A2b} be satisfied. Let $\tilde
u$ be the minimal solution to problem \eqref{06111}, let $u$ be
any solution to problem \eqref{06111} such that 
\eqref{ea65} is satisfied with $\tau=0$. Then $u\equiv\tilde 
u$.
\end{theorem}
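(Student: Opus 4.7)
The plan is to use a Liouville-type argument for the fractional Laplacian, applied to the time-integrated difference between $u$ and the minimal solution $\tilde u$. Set $U(x,t):=\int_0^t u^m(x,s)\,\dd s$, $\tilde U(x,t):=\int_0^t\tilde u^m(x,s)\,\dd s$, and $W:=U-\tilde U$. Since $\tilde u$ is minimal (Remark~\ref{solmin}), $\tilde u\le u$ almost everywhere, hence $W\ge 0$. Moreover $0\le W\le U$, so the hypothesis \eqref{ea65} on $u$ (with $\tau=0$) forces $W(x,t)\to 0$ as $|x|\to\infty$ uniformly in $t>0$, at the explicit rate $|x|^{\sigma-\nu-N/r}$. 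My goal is to prove $W\equiv 0$; together with $u,\tilde u\ge 0$ this yields $u=\tilde u$ almost everywhere.

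The next step is to derive a distributional equation for $W$. Using test functions of the form $\psi(x,t)=\phi(x)f_n(t)$ with $\phi\in C^\infty_0(\Rr^N)$ and $f_n\to\chi_{[0,t]}$ as in the proof of Theorem~\ref{tuni}, and passing to the limit in the weak formulations of Definition~\ref{defsol} for $u$ and $\tilde u$ separately and then subtracting, I obtain
\[
\int_{\Rr^N}\rho(x)[u(x,t)-\tilde u(x,t)]\phi(x)\,\dd x=-\int_{\Rr^N} W(x,t)(-\Delta)^{\sigma/2}\phi(x)\,\dd x
\]
for every $\phi\in C^\infty_0(\Rr^N)$ and every $t>0$. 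Equivalently, $(-\Delta)^{\sigma/2}W(\cdot,t)=-\rho[u(\cdot,t)-\tilde u(\cdot,t)]\le 0$ in the distributional sense, so $W(\cdot,t)$ is a nonnegative, $\sigma$-subharmonic distribution vanishing at infinity.

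To conclude, I would use a Riesz potential representation. Under assumption \eqref{A2b} we have $\alpha>N$ and hence $\rho\in L^1(\Rr^N)$; since $u,\tilde u$ are bounded by Theorem~\ref{02011}, the function $g_t:=\rho[\tilde u(\cdot,t)-u(\cdot,t)]$ is bounded, nonpositive, and lies in $L^1(\Rr^N)$ uniformly in $t$. Define
\[
V(x,t):=\kappa_{N,\sigma}\int_{\Rr^N}\frac{\rho(y)[\tilde u(y,t)-u(y,t)]}{|x-y|^{N-\sigma}}\,\dd y\;\le\;0,
\]
with $\kappa_{N,\sigma}>0$ chosen so that $(-\Delta)^{\sigma/2}V(\cdot,t)=g_t$ in $\Rr^N$. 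By Corollary~\ref{cor2R} applied to the pointwise bound $|g_t|\le 2\|u_0\|_\infty\rho$, the potential $V$ is bounded and $V(x,t)\to 0$ as $|x|\to\infty$ at the same algebraic rate as in \eqref{ea65}. Therefore $W-V$ is a bounded distributional solution of $(-\Delta)^{\sigma/2}(W-V)=0$ on $\Rr^N$ which vanishes at infinity; the Liouville theorem for the fractional Laplacian (bounded $\sigma$-harmonic functions on $\Rr^N$ are constant) then forces $W-V\equiv 0$. Since $W\ge 0$ while $V\le 0$, I conclude $W=V\equiv 0$, and therefore $u=\tilde u$ almost everywhere.

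The main obstacle I anticipate is the rigorous justification of the identification $W=V$: one must make sense of this equality at a purely distributional level, given the low a priori regularity of $W$, and control the constant provided by the Liouville theorem by means of the decay of both $W$ and $V$. A technically simpler alternative, once the inequality $(-\Delta)^{\sigma/2}W\le 0$ together with $W\ge 0$ and the decay at infinity have been established, is to invoke directly a maximum principle for nonnegative $\sigma$-subharmonic distributions on $\Rr^N$ that vanish at infinity; the auxiliary potential $V$ then serves only as a concrete supersolution making the comparison $W\le V\le 0$ transparent.
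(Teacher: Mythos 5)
Your argument takes a genuinely different route from the paper's. The paper never forms an elliptic equation for the time-integrated difference: it repeats the cutoff scheme of Theorem \ref{tuni}, tests with the rescaled functions $\varphi_R$, and shows that $\int_{t_1}^{t_2}\int\zeta\,(-\Delta)^{\sigma/2}\varphi_R\,\dd x\,\dd s\to0$ as $R\to\infty$ via a weighted H\"older inequality: minimality gives $0\le\zeta\le G(u)$, hence $\int_{t_1}^{t_2}\zeta\,\dd s\le U(x,t_2)$, and the decay \eqref{ea65} of $U$ is traded against a power weight $|x|^{\gamma}$ with $\gamma$ in the window \eqref{ea73}, which replaces the lower bound on $\rho$ that was available under \eqref{A1}. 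Sending $R\to\infty$ and then $t_1\to0^+$ yields $\int\rho\,(u-\tilde u)(\cdot,t)\,\dd x=0$, and minimality fixes the sign of the integrand. You instead pass to $W=U-\tilde U\ge0$, identify $(-\Delta)^{\sigma/2}W=-\rho(u-\tilde u)\le0$ distributionally (your sign is correct), and conclude by comparison with the Riesz potential $V$ of $\rho(\tilde u-u)$ together with a Liouville theorem. Conceptually this is cleaner --- it explains why decay of $U$, rather than of $u$, is the natural uniqueness class --- but it relocates the analytic difficulty into potential theory, whereas the paper stays entirely at the level of the weak formulation.

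Two concrete gaps remain. First, the theorem is stated for $N\ge1$ and the paper's proof is dimension-free, but your representation $V(x,t)=\kappa\int\rho(y)[\tilde u-u](y,t)\,|x-y|^{\sigma-N}\,\dd y$ with $(-\Delta)^{\sigma/2}V=g_t$ requires $\sigma<N$: for $N=1$ and $\sigma\in[1,2)$ the kernel $|x-y|^{\sigma-1}$ does not decay and is not the inverse of $(-\Delta)^{\sigma/2}$ in this form (the recurrent regime), and Corollary \ref{cor2R}, which you invoke for the decay of $V$, is stated only for $N\ge2$. Your proof therefore covers at most $N\ge2$ (plus $N=1$ with $\sigma<1$). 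Second, the identification $W=V$ is exactly the step you flag and leave open: to close it you must justify $\int(W-V)\,(-\Delta)^{\sigma/2}\phi\,\dd x=0$ for all $\phi\in C_0^\infty(\Rr^N)$ --- which needs $g_t\in L^1(\Rr^N)$ and a Fubini argument, hence boundedness of the non-minimal solution $u$, an assumption that is tacit in the paper's proof as well --- and then cite or prove the Liouville theorem for bounded distributional solutions of $(-\Delta)^{\sigma/2}h=0$. Both items are true and fixable, but as written the decisive quantitative interaction between \eqref{ea65} and \eqref{A2b} is outsourced to an unproven Liouville/maximum-principle step, so the proposal is a viable program for $N\ge2$ rather than a complete proof of the statement as given.
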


Note that the minimal solution $\tilde u$ is that constructed in Theorem \ref{texi3}. 

\begin{proof}
Repeat the proof of Theorem \ref{tuni}. We obtain:
\begin{multline}
\label{ea66} \int_{\Rr^N} \rho(x)[ u(x,t_2)- \tilde u(x,t_2)]
\varphi_R(x) \dd x - \int_{\Rr^N}\rho(x)[ u(x, t_1) - \tilde u (x,
t_1)] \varphi_R(x) \dd x \\ = \int_{t_1}^{t_2} \int_{\Rr^N}\zeta
(-\Delta)^{\sigma/2} \varphi_R(x) \dd x\,  \dd s,
\end{multline}
Now we estimate the absolute value of the right hand side of
\eqref{ea66}. Let $\gamma>0$ be a constant to be fixed later; put $\tilde C:= [2T \|u_0\|_{\infty}^m]^{1-\frac1q}.$
Define 
\[ \xi(x):=\begin{cases}
1 & |x|\leq 1 \\
|x|^{-\gamma} & |x|>1\,.
\end{cases}
\]
In view of \eqref{ea3}, \eqref{ea15}, and by using H\"older
inequality and the fact that $\tilde u$ is minimal we obtain:
\begin{gather}
\left|\int_{t_1}^{t_2} \int_{\Rr^N} \zeta (-\Delta)^{\sigma/2}
\varphi_R(x) \dd x \,\dd s \right| \nonumber \\
\leq \tilde C \int_{t_1}^{t_2} \int_{\Rr^N} [\xi(x)]^{1/q}\big| (-\Delta)^{\sigma/2} \varphi_R(x) \zeta^{1/q} [\xi(x)]^{-1/q}\dd x \,\dd s \nonumber \\
\leq \tilde C (t_2-t_1)^{1/p}\left( \int_{\Rr^N} [\xi(x)]^{\frac{p}q}\big| (-\Delta)^{\sigma/2} \varphi_R(x) \big|^p \dd x\right)^{1/p} \nonumber \\
\qquad\qquad\qquad\cdot
	\left[ \int_{\Rr^N} \left( \int_{t_1}^{t_2} G\big(u(x,s)\big)\dd s\right)  [\xi(x)]^{-1} \dd x \right]^{1/q}. 
\label{ea67}
\end{gather}
Let $t_2=t.$ Hence, for $\tau=0<t_1$ and $G(u)\geq 0$, 
\begin{equation}\label{ea77}
\int_{t_1}^t G(u(x,s))\dd s \leq U(x,t)\quad (x\in \Rr^N, t>0)\,.
\end{equation}

Observe that, from hypothesis \eqref{ea65} we get, 
\[\int_{\Rr^N} U(x,t) \xi(x) \dd x = \int_{B_1} U(x,t)  \dd x+ \int_{\Rr^N\setminus B_1} U(x,t)\xi(x) \dd x
\]
\[\leq T\|u_0\|_{\infty}^m|B_1| +C\int_{\Rr^N\setminus B_1} \frac{\dd x}{|x|^{-\sigma +\nu +\frac{N}r+\gamma}}\,.
\]
In view of \eqref{ea59} there holds \eqref{ea56}, so we can select $\gamma>0$ such that
\begin{equation}\label{ea73}
N-\frac N r -\nu +\sigma < \gamma <\sigma\,.
\end{equation}
The first inequality in \eqref{ea73}  implies that 
\[ \int_{\Rr^N\setminus B_1} \frac{\dd x}{|x|^{-\sigma +\nu +\frac{N}r+\gamma}}<\infty , \]
so 
\begin{equation}\label{ea74}
\int_{\Rr^N} U(x,t) \xi(x) \dd x <\infty\,. 
\end{equation}

\smallskip

We need the second inequality of \eqref{ea73} in the sequel. In fact, set
\[
C_1:=
\int_{B_{1}} \big| (-\Delta)^{\sigma/2}\varphi_1(x) \big|^p
\dd x.
\]
Arguing as in the proof of Theorem \ref{tuni}
we get:
\begin{align}
\label{ea70}
 &\,    \, \int_{\Rr^N} \big|(-\Delta)^{\sigma/2} \varphi_R(x) \big|^p |x|^{-\frac{\gamma p}q} \dd x  \nonumber \\
 \leq &\, \quad C_1 R^{N-p\sigma} + R^{N-p\sigma+\gamma
\frac p q} \int_{\Rr^N\backslash B_{1}}\frac
1{ (1+ |y|^{(N+p)\sigma})|y|^{-\gamma \frac p q}}\dd y\,.
\end{align}
Since $0<\gamma<\sigma$, we can choose $p>1$ so big that
$\gamma<\frac{p\sigma- N }{p-1}$. So, since $q=\frac p{p-1}$,
\begin{equation}\label{ea71}
\begin{split}
-p\sigma + N+\alpha \frac p q <0, \\
(N+\sigma)p - \gamma \frac p q > N\,.
\end{split}
\end{equation}
From \eqref{ea71}, \eqref{ea70}, \eqref{ea74}, \eqref{ea77} and \eqref{ea67} we can infer that 
\begin{equation}\label{ea75}
\int_{\tau}^{t} \int_{\Rr^N} \zeta (-\Delta)^{\sigma/2}
\varphi_R(x) \dd x \,\dd s\to 0\quad \textrm{as}\;\; R\to \infty\,.
\end{equation}

\smallskip

On the other hand, since $u, \tilde u \in L^1_\rho(\Rr^N), 0\leq
\varphi_R\le 1$ and $\varphi_R(x)\to 1$ as $R\to\infty$ for every
$x\in \Rr^N$, by the dominated convergence theorem,
\begin{multline}
\label{ea76} \lim_{R\to \infty}\left\{ \int_{\Rr^N} \rho(x)[
u(x,t)- \tilde u(x,t)]\varphi_R \dd x - \int_{\Rr^N}\rho(x)[
u(x, t_1) - \tilde u (x,t_1)]\varphi_R \dd x\right\}\\ =
\int_{\Rr^N} \rho(x)[ u(x,t)- \tilde u(x,t)]\dd x -
\int_{\Rr^N}\rho(x)[ u(x, t_1) - \tilde u (x, t_1)]\dd x \,.
\end{multline}
As a consequence of \eqref{ea66}, \eqref{ea75} and \eqref{ea76} we
have:
$$\int_{\Rr^N} \rho(x)[ u(x,t)- \tilde u(x,t)]  \dd x - \int_{\Rr^N}\rho(x)[ u(x, t_1) - \tilde u (x, t_1)] \dd x \, = \, 0\,.$$
Since $u,\tilde u \in C\big([0,\infty);L^1_\rho(\Rr^N)\big)$ and
$u(x,0)=\tilde u(x, 0) = u_0(x)$ for almost every $x\in \Rr^N$, we have, as $t_1 \to 0^+$,
\[
\int_{\Rr^N}\rho(x)[u(x,t)-\tilde u(x, t)]\dd x =
\int_{\Rr^N}\rho(x)[u(x,0)-\tilde u(x, 0)]\dd x =0.
\]
This implies $u(x,t)= \tilde u(x, t)$, for almost every $x \in \Rr^N$,
because $\rho>0$ in $\Rr^N$. Since $t>0$ were arbitrary, the
proof is complete.
\end{proof}

\section{General initial data for the half-Laplacian}
\label{half}
We present now some results for problem \eqref{06111} in the particular case $\sigma=1$, that is
\begin{equation}
\label{ea30}
    \begin{cases}
   \rho\, \partial_t u + (- \Delta)^{\frac{1}{2}}\left[ u^m\right] = 0  & x\in \Rr^N, \, t>0\\
     u=u_0 & x\in \Rr^N, \, t=0\,.
    \end{cases}
\end{equation}
More precisely, we revisit here some of the  results in the previous Sections, removing the assumption $u_0\in L^\infty(\Rr^N)$. 
Preliminarily, in the following Subsection we establish a smoothing estimate for solutions of problem \eqref{ea30}.

\subsection{Smoothing effect}
\label{smoothing}
The proof of the smoothing estimate will make use of the following

\begin{lemma}\label{lemmas}
Let assumption  \eqref{A0}{\em (i)--(iii)} be satisfied and $\sigma=1$.
Let $u$ be a solution to problem \eqref{ea30}. Then 
$\partial_t u \in L^1_{loc}\big((0,\infty); L^1_{\rho}(\Rr^N)\big)$, and 
\begin{equation}\label{ea31}
\rho\, \partial_t u \geq -  \,\frac{\rho u}{(m-1)t}\quad \textrm{in } \Rr^N\times (0,\infty)
\end{equation}
in the sense of distributions.  Furthermore, for any $0\leq \tau<T$,
\begin{multline}\label{ea41}
\int_{\tau}^T\int_{\Rr^N} |(-\Delta)^{\frac{1}4}u^m| \, \dd x \, \dd t +\frac 1{m+1}\int_{\Rr^N}u^{m+1}\rho\, \dd x \\
= \frac 1{m+1}\int_{\Rr^n}\rho \, u^{m+1}(x,\tau)\, \dd x.
\end{multline}
\end{lemma}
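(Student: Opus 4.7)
The plan is to prove the Aronson--B\'enilan type inequality \eqref{ea31} first, deduce from it the local integrability of $\partial_t u$, and finally obtain the energy identity by testing the equation against $u^m$, adapting the argument of \cite{V1} to the weighted setting. For \eqref{ea31}, I would exploit the scaling invariance of equation \eqref{ea30}: if $u$ is the minimal solution with initial datum $u_0$ produced by Theorem \ref{02011}, then for every $\lambda\geq 1$,
\[
u_\lambda(x,t):=\lambda^{-1}\,u\bigl(x,\lambda^{1-m}t\bigr)
\]
is itself a solution, with initial datum $\lambda^{-1}u_0\leq u_0$. I would first run this comparison at the level of the Cauchy--Dirichlet problems \eqref{02013} on balls: Proposition \ref{03012}-$i.$ gives $u^R_\lambda\leq u^R$, and passing to the limit $R\to\infty$ in the monotone construction of Theorem \ref{02011} preserves the inequality on $\Rr^N\times(0,\infty)$. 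Testing $\lambda^{-1}u(x,\lambda^{1-m}t)-u(x,t)\leq 0$ against nonnegative $\varphi\in C^1_0(\Rr^N\times(0,\infty))$, dividing by $\lambda-1>0$ and letting $\lambda\to 1^+$, gives $(m-1)t\,\partial_t u\geq -u$ in the sense of distributions, which is \eqref{ea31} after multiplication by $\rho>0$.

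Once \eqref{ea31} is in hand, the local integrability $\partial_t u\in L^1_{\text{loc}}((0,\infty);L^1_\rho(\Rr^N))$ follows from the decomposition $|\partial_t u|=\partial_t u+2(\partial_t u)_-$: the negative part is pointwise dominated by $u/[(m-1)t]$, uniformly in $L^1_\rho(\Rr^N)$ for $t$ ranging in any compact subinterval of $(0,\infty)$ by \eqref{ea40bb}, while the signed integral $\int_\tau^T\!\int_{\Rr^N}\rho\,\partial_t u\,\dd x\,\dd t=\|u(\cdot,T)\|_{L^1_\rho}-\|u(\cdot,\tau)\|_{L^1_\rho}$ is controlled by the same estimate.

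For the energy identity \eqref{ea41}, which I interpret as involving $|(-\Delta)^{1/4}u^m|^2$ (the natural $L^2$ quantity appearing in \eqref{02016}), the strategy is to test \eqref{02016} with $\psi=\varphi_R(x)f_n(t)\,u^m(x,t)$, where $\varphi_R$ is the spatial cutoff \eqref{ea2} and $f_n$ a smooth approximation of $\chi_{[\tau,T]}$ as in the proof of Theorem \ref{tuni}. The time contribution rewrites, via $\rho\,u^m\,\partial_t u=\tfrac{1}{m+1}\rho\,\partial_t(u^{m+1})$, as $\frac{1}{m+1}\rho\,\partial_t(u^{m+1})$; this manipulation is legitimate because of the integrability of $\partial_t u$ from the previous step together with the $L^\infty$ bound \eqref{ea40}. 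Letting $n\to\infty$ and then $R\to\infty$ (the latter by dominated convergence, using $u\in L^\infty\cap L^1_\rho$) produces the boundary contributions in \eqref{ea41}, while the spatial pairing in \eqref{02016} yields the Dirichlet energy $\iint|(-\Delta)^{1/4}u^m|^2\dd x\,\dd t$ thanks to the $\dot H^{1/2}$-regularity of $u^m$ from Definition \ref{defsol}. I expect the principal technical obstacle to be the fact that $u^m$ itself is not \emph{a priori} an admissible test function in \eqref{02016}: as in \cite{V1}, the cleanest way around this is to rephrase the computation in the extended problem \eqref{06112}, multiplying the equation for $w=\D{E}(u^m)$ by $\varphi_R(x)f_n(t)\,w$ (with $\varphi_R$ independent of the extension variable $y$) and then reading the resulting identity back on the trace $\Gamma$ via Proposition \ref{02012}.
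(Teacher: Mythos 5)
Your proposal is correct and follows exactly the route the paper itself takes: the paper omits the proof and defers to \cite[Proposition 3.1, Theorem 5.4]{V1} and \cite[Section 8]{V2}, and your sketch reconstructs precisely those arguments — the B\'enilan--Crandall scaling $u_\lambda(x,t)=\lambda^{-1}u(x,\lambda^{1-m}t)$ (which, being purely temporal in the spatial variable, is compatible with the variable density $\rho$), comparison via Proposition \ref{03012}-$i.$ on balls, the decomposition $|\partial_t u|=\partial_t u+2(\partial_t u)_-$, and the energy identity obtained by testing with $w=\D{E}(u^m)$ in the extension formulation. Your reading of \eqref{ea41} with $|(-\Delta)^{1/4}u^m|^2$ is also the intended one, consistent with its later use in \eqref{ea44}.
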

Lemma \ref{lemmas} can be proved by applying the same arguments as in \cite[Proposition 3.1, Theorem 5.4]{V1} and \cite[Section 8]{V2}; we omit the details. 

\smallskip

We state the smoothing estimate in the following
\begin{theorem}\label{tse}
Let assumption \eqref{A0}{\em (i)--(iii)} be satisfied and $\sigma=1$;
suppose $\rho\in L^\infty(\Rr^N)$. Let $u$ be a solution to problem \eqref{ea30}. Then there exists a positive constant $C=C(N, m)$ such that 
\begin{equation}\label{ea32}
\|u(\cdot, t)\|_{L^\infty(\Rr^N)}\,\leq\, C\, t^{-\theta} \, \|u_0\|^{\theta/N}_{L^1_\rho(\Rr^N)}\quad \textrm{for any}\;\; t>0\,,
\end{equation}
where $\theta=\frac 1{m-1+\frac 1 N}.$
\end{theorem}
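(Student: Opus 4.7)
The plan is to follow the Moser--iteration argument of \cite[Section~8]{V1}, with the only substantive modification being the systematic use of $\rho \in L^\infty(\Rr^N)$ to convert between weighted and unweighted Lebesgue norms throughout the iteration.

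First, for any $p>1$, test the equation in \eqref{ea30} with (a suitable compactly supported approximation of) $u^{p-1}$. The regularity $\partial_t u \in L^1_{\mathrm{loc}}((0,\infty);L^1_{\rho}(\Rr^N))$ from Lemma~\ref{lemmas} is what justifies the limiting procedure and yields the $L^p_\rho$ energy identity
$$\frac{1}{p}\frac{\dd}{\dd t}\int_{\Rr^N}\rho\,u^p\,\dd x = -\int_{\Rr^N} u^{p-1}(-\Delta)^{1/2}(u^m)\,\dd x.$$
Combining this with the Stroock--Varopoulos inequality
$$\int_{\Rr^N} u^{p-1}(-\Delta)^{1/2}(u^m)\,\dd x \geq c(m,p)\int_{\Rr^N}\bigl|(-\Delta)^{1/4}u^{(m+p-1)/2}\bigr|^2 \dd x$$
and the fractional Sobolev embedding $\dot H^{1/2}(\Rr^N)\hookrightarrow L^{2N/(N-1)}(\Rr^N)$ produces the key differential inequality
$$\frac{\dd}{\dd t}\int_{\Rr^N}\rho\,u^p\,\dd x \leq -C(m,p,N)\Bigl(\int_{\Rr^N} u^{(m+p-1)N/(N-1)}\,\dd x\Bigr)^{(N-1)/N}.$$

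The heart of the argument is then a Moser iteration along a geometric sequence of exponents $p_k\to\infty$. At each step the bound $\rho\leq \|\rho\|_\infty$ is used to convert the unweighted $L^{q(p_k)}$ norm appearing on the right-hand side into a weighted one (paying a factor $\|\rho\|_\infty^{-(N-1)/N}$), thereby closing the recursion into an autonomous inequality for the weighted norms. The iteration is seeded at $p=1$ via the $L^1_\rho$ contraction $\|u(t)\|_{L^1_\rho}\leq \|u_0\|_{L^1_\rho}$ from Theorem~\ref{02011}. Since $\|u\|_{L^q_\rho}\to \|u\|_\infty$ as $q\to\infty$ (see Section~\ref{sec:aa}), passing to the limit $k\to\infty$ yields \eqref{ea32}; the exponent $\theta = 1/(m-1+1/N)$ is the unique one compatible with the scaling invariance of the equation, which serves as a useful check on the bookkeeping.

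The main obstacle is precisely this bookkeeping: one must verify that the products of $\|\rho\|_\infty$-factors, Sobolev constants, and combinatorial factors from the Stroock--Varopoulos step accumulate in convergent series as $k\to\infty$. This computation is carried out in detail in \cite{V1} for the case $\rho\equiv 1$; in our setting the only genuine modification is the appearance of $\|\rho\|_\infty$, which contributes a multiplicative factor to the final constant $C$ in \eqref{ea32}, so the adaptation is indeed slight, as anticipated.
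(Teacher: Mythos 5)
Your proposal is correct in outline, but it follows a genuinely different route from the paper's. The paper works with the harmonic extension $(u,w)$ of problem \eqref{06112}: it invokes the Benilan--Crandall-type estimate \eqref{ea31}, $\rho\,\partial_t u \geq -\rho u/((m-1)t)$, to replace the time derivative by an algebraic term at each \emph{fixed} time $t$, then tests with $\psi=w^p$ and uses the trace Sobolev embedding of $H^1(\Rr^{N+1}_+)$ into $L^{2N/(N-1)}(\Gamma)$ to obtain the purely algebraic recursion \eqref{ea34} between the norms $\|u(\cdot,t)\|_{L^{q_k}_\rho}$, all evaluated at the same $t$; no integration in time occurs in the iteration, and the recursion is closed by the interpolation $\|u\|_2\leq\|u\|_1^{1/2}\|u\|_\infty^{1/2}$ together with \eqref{ea40bb}. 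You instead stay on $\Rr^N$, use Stroock--Varopoulos plus the embedding $\dot H^{1/2}\hookrightarrow L^{2N/(N-1)}$ to get a genuine differential inequality in $t$, and run the parabolic Moser iteration over nested time intervals. Both arguments insert $\rho\leq\|\rho\|_\infty$ at exactly the same point (to pass from the unweighted Sobolev norm back to the weighted one), and both tacitly require $N\geq 2$ for the exponent $2N/(N-1)$. Your route avoids the extension and the ``strong solution'' discussion needed to legitimize $\psi=w^p$, and it does not degenerate as $m\to 1$ (where $L_t=1/((m-1)t)$ blows up); the paper's route avoids the time-integration bookkeeping and reads off $\theta$ directly from the algebraic recursion. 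One caveat: the detailed convergence of the time-nested iteration you defer to \cite{V1} is not literally there --- \cite[Theorem 2.4]{V1} carries out the algebraic (fixed-time) iteration that the paper follows --- so you should either point to \cite{V2}, where the differential-inequality version is executed for general $\sigma$, or write out the accumulation of constants yourself.
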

Theorem \ref{tse} can be proved by minor changes in the proof of \cite[Theorem 2.4]{V1}. However, we sketch the proof for reader's convenience. 

\begin{proof} We can consider the solution $(u,w)$ to problem \eqref{06112} with $\sigma=1$. Recall that, according with notations used in Section \ref{existence}, 
when dealing with harmonic extension, we use the notations $\Omega:= \Rr^{N+1}_+$ and $\Gamma:= \bar \Omega\cap \{y=0\}$. 

In view of Lemma \ref{lemmas}, from equality \eqref{291101} we deduce:
\[ 
\int_{\Omega} \langle \nabla w, \nabla \psi  \rangle \,\dd x \,\dd y +  \int_{\Gamma} \rho\, \partial_t u \,\psi\, \dd x = 0. 
\] 
This combined with \eqref{ea31} yields
\[
\int_{\Omega} \langle \nabla w, \nabla \psi  \rangle \,\dd x \,\dd y - L_t \int_{\Gamma} \rho\, u \,\psi \,\dd x \leq  0, 
\] 
where $L_t:=\frac 1{(m-1)t}, \psi\geq 0$. In view of Lemma \ref{lemmas}, $(u,w)$ is also a so-called {\it strong solution}, that is it solves problem \eqref{06112} with $\sigma=1$ almost everywhere in $\Omega\times (0,\infty)$ (see \cite[Section 5.3]{V1}). So, we can choose $\psi=w^p, p>0$ (see \cite[p. 187]{Vlibro}). Easy computations and the trace embedding give, for some constant $\tilde C>0$,
\begin{gather}
L_t \int_{\Gamma} \rho \,u^{mp +1}\, \dd x \, \geq\,  \int_{\Omega} \langle \nabla w, \nabla w^p \rangle\,  \dd x \dd y \nonumber\\
= \frac{4p}{(p+1)^2}\int_{\Omega}\big|\nabla w^{\frac{p+1}2}\big|^2\, \dd x\, \dd y \, \geq\,   \tilde C \frac{4p}{(p+1)^2}\left(\int_{\Gamma} u^\frac{Nm(p+1)}{N-1} \dd x\right)^\frac{N-1}{N}\nonumber\\
\geq \tilde C\,  \bar C \, \left(\int_{\Gamma}u^{\frac{Nm(p+1)}{N-1}}\rho\,  \dd x \right)^{\frac{N-1}N}, \label{30011}
\end{gather}
where $\bar C:= \frac{4p}{(p+1)^2}\|\rho\|_{\infty}^{-\frac{N-1}{N}}$.

\smallskip

For any $t>0$ and $q\in [1,\infty)$, set $\|u(\cdot, t)\|_{L^q_{\rho}(\Gamma)}\equiv \|u\|_{q}:= \left(\int_\Gamma |u|^q \rho\, \dd x \right)^{\frac 1 q}$. Recall that, in view of assumption \eqref{A0}-(i), the measure $\rho(x)\dd x$ is absolutely continuous with respect the Lebesgue measure in $\Rr^N$, so $\displaystyle\lim_{q\to\infty}\|u\|_q=\|u\|_{\infty}$, where $\|\cdot\|_{\infty}$ is the usual norm in $L^\infty(\Rr^N)$. 

Inequality \eqref{30011} imply:
\begin{equation}\label{ea33}
\|u\|_{sm(p+1)}\leq\left( \frac{L_t}{\tilde C\bar C}\right)^{\frac 1{m(p+1)}} \|u\|_{mp+1}^{\frac{mp+1}{m(p+1)}}\,,
\end{equation}
where $s:=\frac N{N-1}\,.$ Iterating \eqref{ea33} we get:
\begin{equation}\label{ea34}
\|u\|_{q_{k + 1}} \leq \left( \frac{L_t}{\tilde C\bar C}\right)^{\frac s{q_{k+1}}} \|u \|_{q_k}^{\frac{sq_k}{q_{k+1}}},
\end{equation}
where $q_{k+1}:=s(q_k+m-1), q_0:=mp+1\,.$

\smallskip

Easy computations  (see \cite[Theorem 2.4]{V1} for details) give: 
\begin{equation}\label{ea35}
\| u\|_{\infty} \leq \left( \frac{L_t}{\tilde C\bar C}\right)^{\frac N A} \|u\|_{q_0}^{\frac{q_0}A},
\end{equation}
where $A:= q_0+N(m-1)\,.$

Choose $p=\frac 1 m$; consequently $q_0=2$,  $A=2+N(m-1)$ and $\bar C=\frac{4m}{m^2+2m+1}$. By \eqref{ea35}, since $m\geq 1$,
\[
\| u\|_{2}\leq \| u\|_1^{\frac 1 2}\|u\|_{\infty}^{\frac 1 2} \leq \left( \frac{L_t}{\tilde C\bar C}\right)^{\frac{N}{2mA}}\|u\|_1^{\frac 1 2}\|u\|_{2}^{\frac 1{mA}},
\]
so
\begin{equation}\label{ea36}
\|u\|_2\leq  \left( \frac{L_t}{\tilde C\bar C}\right)^{\frac N{2(1+N(m-1))}}\|u\|_1^{\frac{2+N(m-1)}{2(1+N(m-1))}}\,.
\end{equation}
From \eqref{ea36}, \eqref{ea35} and \eqref{ea40bb} we obtain:
\[
\|u\|_{\infty}\leq \left( \frac{L_t}{\tilde C\bar C}\right)^\theta
\|u\|_1^{\theta/N}
\leq  C t^{-\theta}
\|u_0\|_1^{\theta/N}.
\qedhere
\]
\end{proof}

\begin{remark}
\label{oss3}
In \cite{V2} the smoothing estimate has been established for problem \eqref{16012} for $\sigma\in (0,2)$. It remains to be understood whether such effect holds for the general problem \eqref{06111} for any $\sigma\in (0,2)$.  
\end{remark}

\subsection{Existence of solutions}\label{exiu}
We shall assume, instead of \eqref{A0}, the following
\begin{equation}
\label{A01} 
\tag{{\bf A}$_0^*$}
\begin{cases}
\text{(i)} &\rho\in C(\Rr^N)\cap L^{\infty}(\Rr^N), \,\rho>0 \text{ in } \Rr^N;\\
\text{(ii)}& u_0\in L^+_\rho(\Rr^N)\\
\text{(iii)} &m\ge 1\,.
\end{cases}
\end{equation}
Note that now $u_0$ is not necessary bounded, on the other side we require that $\rho$ is so.
Concerning existence of solutions, we shall prove next
\begin{theorem}\label{teu}
Let assumption \eqref{A01} be satisfied. Then there exists a minimal solution to problem \eqref{ea30}. Furthermore, \eqref{ea40bb} holds true.
\end{theorem}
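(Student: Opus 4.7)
The plan is to approximate the (possibly unbounded) datum $u_0\in L^+_\rho(\Rr^N)$ by the bounded monotone sequence $u_{0,n}:=\min\{u_0,n\}$. Each $u_{0,n}$ belongs to $L^\infty(\Rr^N)\cap L^+_\rho(\Rr^N)$, hence satisfies the former assumption \eqref{A0} (with $\sigma=1$), and Theorem \ref{02011} provides a minimal solution $(u_n,w_n)$ to \eqref{ea30} with initial value $u_{0,n}$, fulfilling \eqref{ea40bb}. By Proposition \ref{propcontraz} applied to the ordered data $u_{0,n}\leq u_{0,n+1}$ (which yields $u_n\leq u_{n+1}$ for the minimal solutions, thanks to the monotonicity built into the construction of Theorem \ref{02011} via Proposition \ref{03012}-iii), the pointwise limit
\[
u(x,t):=\lim_{n\to\infty}u_n(x,t)
\]
exists in $\Rr^N\times(0,\infty)$, and similarly for $w_n\to w:=\D{E}(u^m)$ in $\Omega\times(0,\infty)$.

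The crucial ingredient at this point is the smoothing estimate of Theorem \ref{tse}, which applies since $\rho\in L^\infty(\Rr^N)$ and gives, uniformly in $n$,
\[
\|u_n(\cdot,t)\|_{L^\infty(\Rr^N)}\,\leq\, C\,t^{-\theta}\,\|u_{0,n}\|_{L^1_\rho}^{\theta/N}\,\leq\, C\,t^{-\theta}\,\|u_0\|_{L^1_\rho}^{\theta/N}\quad\text{for all } t>0.
\]
Combined with the contraction bound $\|u_n(\cdot,t)\|_{L^1_\rho}\leq\|u_{0,n}\|_{L^1_\rho}\leq\|u_0\|_{L^1_\rho}$, this delivers uniform $L^\infty\cap L^1_\rho$ control of $u_n$ on each slab $\Rr^N\times[\tau,T]$ with $0<\tau<T$. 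Plugging these bounds into the energy identity \eqref{ea41} (valid for each $u_n$ by Lemma \ref{lemmas}) yields a uniform estimate of $u_n^m$ in $L^2_{\text{loc}}((0,\infty);\dot H^{1/2}(\Rr^N))$, hence of $w_n$ in $L^2_{\text{loc}}((0,\infty);X^1(\Omega))$.

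I would then pass to the limit in the weak formulation \eqref{02016}. Monotone convergence in $L^1_\rho$ together with the uniform $L^\infty$ bound on $[\tau,T]$ gives $u_n\to u$ in $L^p_\rho$ for every $p\in[1,\infty)$ locally in time; the uniform energy bound yields (along a subsequence) weak convergence of $(-\Delta)^{1/4}u_n^m$ in $L^2_{\text{loc}}((0,\infty);L^2(\Rr^N))$, whose limit is identified as $(-\Delta)^{1/4}u^m$ by the a.e.\ convergence and the smoothing bound. These convergences suffice to take $n\to\infty$ in \eqref{02016}, so $u$ satisfies the weak formulation. The $C([0,\infty);L^1_\rho(\Rr^N))$-regularity and the initial condition $u(\cdot,0)=u_0$ are obtained from the contraction principle (Proposition \ref{propcontraz}) applied along the approximating sequence: the maps $t\mapsto u_n(\cdot,t)$ are equicontinuous into $L^1_\rho$, and pass to the limit together with the identity $u_n(\cdot,0)=u_{0,n}\to u_0$ in $L^1_\rho$ by monotone convergence. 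The bound \eqref{ea40bb} survives the limit by Fatou's lemma, and minimality of $u$ follows since any other solution $\tilde u$ with datum $u_0$ satisfies $\tilde u\geq u_n$ by comparison against the minimal solutions of the truncated problems, so $\tilde u\geq u$.

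The main obstacle is the identification of the limit in the nonlocal dissipation term, namely that the weak limit of $(-\Delta)^{1/4}u_n^m$ is indeed $(-\Delta)^{1/4}u^m$. This is where the smoothing estimate does the essential work: it converts the mere $L^1_\rho$-monotone convergence of $u_n$ into a locally uniform $L^\infty$ bound for $t\geq\tau>0$, which is what permits the a.e.\ identification of the weak limit and, equivalently, the strong convergence of $w_n$ to $w=\D{E}(u^m)$ in $L^2_{\text{loc}}((0,\infty);W^{1,2}_{\text{loc}}(\Omega,y^{1-\sigma}))$. A secondary subtlety is the continuity at $t=0^+$, where the smoothing bound degenerates; this is handled by the $L^1_\rho$-contraction, which is uniform in time and insensitive to $L^\infty$ blow-up.
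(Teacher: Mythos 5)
Your proposal is correct and follows essentially the same strategy as the paper: approximate $u_0$ by bounded truncations, solve via Theorem \ref{02011}, use the smoothing estimate \eqref{ea32} for uniform $L^\infty$ control away from $t=0$ together with the energy identity \eqref{ea41} for the $\dot H^{1/2}$ bound, and recover the initial trace from the $L^1_\rho$-contraction. The only cosmetic difference is that you use the monotone truncations $\min\{u_0,n\}$ and pointwise monotone limits, whereas the paper takes a general approximating sequence in $L^1_\rho$ and gets convergence of $u_n(\cdot,t)$ directly from the contraction estimate \eqref{e40b}; both routes work.
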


The same arguments used to prove Proposition \ref{propcontraz} yield the following
\begin{proposition}\label{propcontrazu}
Let assumption \eqref{A01} be satisfied. Let $(u,w)$ and $(\hat u, \hat w)$ be minimal solutions provided by Theorem \ref{teu}, corresponding to initial data $u_0$ and $\hat u_0$, respectively. Then, for any $t>0$, inequality \eqref{e40b} holds true.
\end{proposition}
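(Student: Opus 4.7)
The plan is to reduce Proposition \ref{propcontrazu} to the already-established Proposition \ref{propcontraz} by the same truncation procedure that presumably underlies the construction of the minimal solution in Theorem \ref{teu}. Once the approximating bounded data yield approximating minimal solutions to which \eqref{e40b} applies, the monotonicity of the approximation transfers the contraction to the unbounded-data setting.

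First I would set $u_0^{(n)}:=\min\{u_0,n\}$ and $\hat u_0^{(n)}:=\min\{\hat u_0,n\}$. Each of these lies in $L^{\infty}(\Rr^N)\cap L^+_{\rho}(\Rr^N)$, both sequences are monotone nondecreasing in $n$, and by monotone convergence $u_0^{(n)}\to u_0$ and $\hat u_0^{(n)}\to \hat u_0$ in $L^1_{\rho}(\Rr^N)$. By Theorem \ref{02011}, for each $n$ there exist minimal solutions $(u^{(n)},w^{(n)})$ and $(\hat u^{(n)},\hat w^{(n)})$ corresponding to $u_0^{(n)}$ and $\hat u_0^{(n)}$ respectively, and Proposition \ref{propcontraz} yields
\begin{equation}\label{contrapprox}
\int_{\Gamma}\bigl[u^{(n)}(x,t)-\hat u^{(n)}(x,t)\bigr]_{+}\,\rho\,\dd x \;\le\; \int_{\Gamma}\bigl[u_0^{(n)}-\hat u_0^{(n)}\bigr]_{+}\,\rho\,\dd x \qquad (t>0).
\end{equation}

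Next I would argue that $u^{(n)}\uparrow u$ and $\hat u^{(n)}\uparrow\hat u$ pointwise in $\Rr^N\times(0,\infty)$, where $u,\hat u$ are the minimal solutions of Theorem \ref{teu}. This monotonicity is inherited from the approximating problems \eqref{02019} on each ball $B_R$: by Proposition \ref{03012}-$i.$ the ordering $u_0^{(n)}\le u_0^{(n+1)}$ yields the corresponding ordering of the associated Cauchy-Dirichlet solutions on $B_R$, and this ordering is preserved when sending $R\to\infty$ as in the proof of Theorem \ref{02011}, hence $u^{(n)}\le u^{(n+1)}$. The identification of $\lim_n u^{(n)}$ with the minimal solution $u$ of Theorem \ref{teu} is the content of the construction of $u$, which is built precisely by first taking $R\to\infty$ (giving $u^{(n)}$) and then $n\to\infty$, the latter limit being controlled by the uniform smoothing bounds of Theorem \ref{tse}.

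Finally, the passage to the limit $n\to\infty$ in \eqref{contrapprox} is routine. On the right-hand side, dominated convergence applies since $0\le[u_0^{(n)}-\hat u_0^{(n)}]_{+}\le u_0\in L^1_{\rho}(\Rr^N)$, giving the limit $\int_{\Gamma}[u_0-\hat u_0]_{+}\rho\,\dd x$. On the left-hand side, the pointwise monotone convergence $u^{(n)}(x,t)\uparrow u(x,t)$, together with the domination $[u^{(n)}-\hat u^{(n)}]_{+}\le u^{(n)}\le u$ and $u(\cdot,t)\in L^1_{\rho}(\Rr^N)$ (guaranteed by \eqref{ea40bb} inherited from the approximation), again justifies dominated convergence, yielding $\int_{\Gamma}[u-\hat u]_{+}\rho\,\dd x$. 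The main delicate point is to confirm that the monotone limit of the approximating minimal solutions $u^{(n)}$ is itself \emph{the} minimal solution of the limiting problem; once this identification is settled by reading off the construction that proves Theorem \ref{teu}, the contraction passes through to the limit without further effort.
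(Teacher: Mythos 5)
Your proposal is correct and follows essentially the route the paper intends: the paper gives no explicit proof of Proposition \ref{propcontrazu}, stating only that the same arguments as for Proposition \ref{propcontraz} apply, namely approximate by bounded data in $L^1_\rho\cap L^\infty$, invoke the bounded-data contraction, and pass to the limit. The one point you flag as delicate---identifying the monotone limit of your truncated solutions with the minimal solution of Theorem \ref{teu}---dissolves at once if you observe that \eqref{e40b} for bounded data makes the limit in the construction of Theorem \ref{teu} independent of the chosen approximating sequence; alternatively, reusing the construction's own sequence $u_{0n}$, for which $u_n(\cdot,t)\to u(\cdot,t)$ in $L^1_\rho(\Rr^N)$ by definition, the $1$-Lipschitz continuity of $f\mapsto\int_{\Gamma}[f]_+\,\rho\,\dd x$ on $L^1_\rho(\Rr^N)$ gives the limit passage on both sides without any appeal to monotonicity or dominated convergence.
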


\begin{proof}[Proof of Theorem \ref{teu}]
Let $\{u_{0n}\}\subset L^1_{\rho}(\Rr^N)\cap L^\infty(\Rr^N)$ such that $u_{0n}\geq 0$, $u_{0n}\to u_0$ in $L^1_{\rho}(\Rr^N)$ as $n\to \infty.$
For any $n\in \mathbb  N$, let $u_n$ be the minimal solution provided by Theorem \ref{02011} corresponding to initial datum $u_{0n}$. In view of \eqref{e40b}, for any $t>0$, $u_n(\cdot, t)\to u(\cdot, t)$ in 
$L^1_{\rho}(\Rr^N)$ as $n\to\infty$, for some function $u$. Moreover, by standard results in nonlinear semigroup theory, $u_n\to u$ in $C\big([0,\infty); L^1_{\rho}(\Rr^N) \big)$.

Take any $\tau>0$. By \eqref{ea32}, there exists $C_{\tau}>0$ such that for all $n\in \mathbb N$
\begin{equation}\label{ea42}
0\leq u_n\leq C_{\tau}\qquad \textrm{in }\Rr^N\times (\tau,\infty).
\end{equation} 
We can find a positive constant $C_1=C_1(\|u_0\|_{L^1_\rho})$ such that $\|u_{0n}\|_{L^1_{\rho}}\leq C_1$ for all $n\in \mathbb N$. By Theorem \ref{02011}, 
$\|u_n(\cdot, \tau)\|_{L^1_{\rho}}\leq C_1$. This combined with \eqref{ea42} implies that there exists $C_2>0$ such that 
\begin{equation}\label{ea43}
\|u_n(\cdot, \tau)\|_{L^m_{\rho}}\leq C_2.
\end{equation}
From \eqref{ea41} and \eqref{ea43} we deduce that 
\begin{equation}\label{ea44}
\int_0^{\infty}\|u_n^m\|^2_{\dot H^{1/2}} \, \dd t\leq \frac 1{m+1}\, C_2^m.
\end{equation}
From \eqref{ea42} and \eqref{ea44}, since $\tau>0$ was arbitrary, we can infer that $u$ solves equation
\[ 
\rho\, \partial_t u + (-\Delta)^{1/2} [u^m]=0\qquad (x\in \Rr^N, \,t>0).
\]   
Observe that  for all $t>0$
\begin{multline*}
\int_{\Rr^N} |u(x,t)- u_0(x)|\, \rho(x) \, \dd x\leq \int_{\Rr^N} |u(x,t)- u_n(x,t)|\, \rho(x)\,  \dd x \\
+\int_{\Rr^N} |u_n(x,t)-u_{0n}(x)|\, \rho(x) \, \dd x +\int_{\Rr^N}|u_{0n}(x)- u_0(x)|\, \rho(x)\,  \dd x.
\end{multline*}
In view of the contraction principle in $L^1_{\rho}$ and continuity in $L^1_{\rho}$, this implies that $u(x,0)=u_0(x)$ for almost every $x\in \Rr^N$. This completes the proof.  
\end{proof}
\label{subsec:halfunb}

\subsection{Slowly decaying density}
\label{slowu}
Concerning uniqueness of solutions we shall prove next
\begin{theorem}\label{tuniu} 
Let $\sigma=1$ and assumptions \eqref{A01}, \eqref{A1} be satisfied. Then problem \eqref{ea30} admits at most one solution.
\end{theorem}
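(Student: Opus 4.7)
The plan is to bootstrap any $L^1_\rho$ solution up to a bounded one via the smoothing estimate of Theorem \ref{tse}, and then re-run the uniqueness argument of Theorem \ref{tuni} with the time integration starting from some $\tau>0$ instead of from $0$. Let $u$ be any solution to \eqref{ea30} and let $\tilde u$ be the minimal solution supplied by Theorem \ref{teu}; by minimality, $\tilde u\leq u$ almost everywhere in $\Rr^N\times(0,\infty)$. Since we assume $\sigma=1$ and $\rho\in L^\infty(\Rr^N)$ (by \eqref{A01}), Theorem \ref{tse} yields
\[
\|u(\cdot,t)\|_{L^\infty(\Rr^N)},\;\|\tilde u(\cdot,t)\|_{L^\infty(\Rr^N)}\leq K_\tau:=C\,\tau^{-\theta}\,\|u_0\|_{L^1_\rho}^{\theta/N}\quad\textrm{for every }t\geq\tau>0,
\]
so that on every strip $\Rr^N\times[\tau,t]$ with $\tau>0$ both solutions are uniformly bounded.

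Fix $0<\tau<t$. Running the test-function computation of Theorem \ref{tuni} with $t_1=\tau$ and $t_2=t$, choosing $\psi_n(x,s)=\varphi_R(x)f_n(s)$ with $f_n\to\chi_{[\tau,t]}$, and subtracting the equations for $u$ and for $\tilde u$, one arrives at
\begin{multline*}
\int_{\Rr^N}\rho\,[u(x,t)-\tilde u(x,t)]\,\varphi_R(x)\,\dd x-\int_{\Rr^N}\rho\,[u(x,\tau)-\tilde u(x,\tau)]\,\varphi_R(x)\,\dd x\\
=\int_\tau^t\!\int_{\Rr^N}[G(u)-G(\tilde u)]\,(-\Delta)^{1/2}\varphi_R(x)\,\dd x\,\dd s.
\end{multline*}
Because $u$ and $\tilde u$ are bounded by $K_\tau$ on $[\tau,t]$, the Lipschitz-type inequality \eqref{ea15} holds on this strip (with a constant depending on $K_\tau$). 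Combining this with hypothesis \eqref{A1}, the scaling identity \eqref{ea3}, the decay estimate \eqref{ea4}, and the H\"older chain \eqref{ea10}--\eqref{ea12} exactly as in the proof of Theorem \ref{tuni} shows that the right-hand side vanishes as $R\to\infty$. Dominated convergence on the left-hand side (valid since $\rho(u-\tilde u)$ is in $L^1$ uniformly in $s\in[\tau,t]$) then gives
\[
\int_{\Rr^N}\rho\,[u(x,t)-\tilde u(x,t)]\,\dd x\;=\;\int_{\Rr^N}\rho\,[u(x,\tau)-\tilde u(x,\tau)]\,\dd x.
\]

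The final step is to let $\tau\to 0^+$. The $L^1_\rho$-continuity of $u$ and $\tilde u$ at $t=0$ built into Definition \ref{defsol}, together with the common initial datum $u_0$, forces the right-hand side to $0$. Since $u\geq\tilde u$ almost everywhere and $\rho>0$, the vanishing of $\int_{\Rr^N}\rho(u-\tilde u)(\cdot,t)\,\dd x$ forces $u(\cdot,t)=\tilde u(\cdot,t)$ almost everywhere; as $t>0$ was arbitrary, uniqueness follows.

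The main technical point, relative to Theorem \ref{tuni}, is that the constant $K_\tau$ supplied by the smoothing estimate blows up as $\tau\to 0^+$. This is not an obstruction because $K_\tau$ is only used to bound the integrand on the fixed strip $[\tau,t]$ while running the H\"older computation, and the subsequent passage $\tau\to 0^+$ relies only on $L^1_\rho$-continuity, which is part of the definition of solution. A secondary, conceptual point is that the argument uses the \emph{minimality} of $\tilde u$ (rather than symmetry between two arbitrary solutions) to promote the integral identity $\int\rho(u-\tilde u)=0$ to the pointwise equality $u=\tilde u$.
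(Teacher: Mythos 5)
Your proof is correct, but it is organized rather differently from the paper's. The paper argues by contradiction: it picks $T$ and $\epsilon$ with $\|u(\cdot,T)-\hat u(\cdot,T)\|_{L^1_\rho}>\epsilon$, uses $L^1_\rho$-continuity at $t=0$ to find $\tau$ with $\|u(\cdot,\tau)-\hat u(\cdot,\tau)\|_{L^1_\rho}<\epsilon/2$, restarts the evolution at time $\tau$ from the (now bounded, by the smoothing estimate) data $u(\cdot,\tau)$ and $\hat u(\cdot,\tau)$, identifies the restarted solutions with $u$ and $\hat u$ by invoking Theorem \ref{tuni} as a black box, and closes the contradiction with the $L^1_\rho$-contraction principle (Proposition \ref{propcontraz}). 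You instead re-open Theorem \ref{tuni} and rerun its test-function/H\"older computation on the strip $[\tau,t]$, where both solutions are bounded by smoothing, to get conservation of $\int\rho\,(u-\tilde u)$ between times $\tau$ and $t$, and then send $\tau\to 0^+$. Both routes rest on the same two pillars (the smoothing estimate \eqref{ea32} applied to an arbitrary solution, and the machinery behind Theorem \ref{tuni}); yours is the more direct one, and is legitimate because the computation in Theorem \ref{tuni} is already carried out for arbitrary $0<t_1<t_2$ with $t_1\to 0^+$ taken only at the end, so nothing new is needed beyond replacing the global $L^\infty$ bound by the $\tau$-dependent one $K_\tau$ (which, as you note, is harmless since $R\to\infty$ is taken at fixed $\tau$). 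One structural difference worth flagging: your final step passes from $\int\rho\,(u-\tilde u)(\cdot,t)=0$ to $u=\tilde u$ by invoking the minimality $\tilde u\le u$ asserted in Theorem \ref{teu}, whereas the paper's route avoids any appeal to comparison by working with $L^1_\rho$-norms and the contraction principle; your use of minimality actually makes the sign issue at the end of the argument more transparent than in the paper's own proof of Theorem \ref{tuni}, which concludes $u=\tilde u$ from the vanishing of the integral without explicit sign information. What the paper's approach buys is modularity (bounded-case uniqueness is quoted, not reproved); what yours buys is a self-contained derivation and, as a byproduct, conservation of $\int\rho\,(u-\tilde u)$ on $[\tau,t]$.
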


\begin{proof} Suppose, by contradiction, that there exist two different solutions $u, \hat u$. Then, for some $T>0$ and $\epsilon>0$, there holds
\begin{equation}\label{ea45}
\| u(\cdot, T) -\hat u(\cdot, T)\|_{L^1_{\rho}}>\epsilon\,.
\end{equation}
Since $u, \hat u\in C\big([0,\infty); L^1_\rho(\Rr^N)\big)$, we can select $0<\tau<T$ such that 
\[\|u(\cdot, \tau)  - u_0 \|_{L^1_{\rho}}<\frac{\epsilon}{4}\,\text{ and }\,\|\hat u(\cdot, \tau)  - u_0 \|_{L^1_{\rho}}<\frac{\epsilon}{4}\,,
\]
thus
\begin{equation}\label{ea46}
\|\hat u(\cdot, \tau)-u(\cdot, \tau)\|_{L^1_\rho}<\frac{\epsilon}2\,.
\end{equation}
Let $v$ be the solution to problem 
\[
    \begin{cases}
   \rho\, \partial_t v + (- \Delta)^{\frac{1}{2}}\left[  v^m\right] = 0  & x\in \Rr^N, \, t>0\\
     v= u(x,\tau) & x\in \Rr^N, \, t=\tau\,,
    \end{cases}
\]
and $\hat v$ the solution to problem 
\[
    \begin{cases}
   \rho\, \partial_t \hat v + (- \Delta)^{\frac{1}{2}}\left[ \hat v^m\right] = 0  & x\in \Rr^N, \, t>0\\
     \hat v= \hat u(x,\tau) & x\in \Rr^N, \, t=\tau\,.
    \end{cases}
\]
Note that, in view of \eqref{ea32}, $u, \hat u $ are bounded in $\Rr^N\times [\tau,\infty)$. Hence such solutions $v,\hat v$, provided by Theorem \ref{02011},  are bounded as well. 
By Proposition \ref{propcontraz} and \eqref{ea46}, for all $t>\tau$,
\[
\| \hat v(\cdot, t)-v(\cdot, t)\|_{L^1_\rho}\leq \|\hat u(\cdot, \tau)-u(\cdot, \tau) \|_{L^1_\rho} <\frac{\epsilon}2.
\]
In view of \eqref{A1} for $u, v, \hat u, \hat  v\in L^{\infty}(\Rr^N\times (\tau,\infty))$,  by Theorem \ref{tuni}, $v=u$, $\hat v=\hat u$. Hence 
\begin{equation}\label{ea47}
\|\hat u(\cdot, t) - u(\cdot, t)\|_{L^1_\rho}<\frac{\epsilon}2\,.
\end{equation}
If we choose $t=T$, \eqref{ea47} is in contradiction with \eqref{ea45}. Hence $u\equiv \hat u$. The proof is completed. 
\end{proof}

Conservation of mass property remains true even if we assume \eqref{A01} instead of \eqref{A0}. In fact, we have next
\begin{proposition}
\label{propcmu}
Let $\sigma=1$ and assumptions \eqref{A01}, \eqref{A1} be satisfied. Then equality \eqref{ea48} holds true. 
\end{proposition}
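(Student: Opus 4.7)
The plan is to obtain conservation of mass by a standard approximation argument that reduces the claim to the already-established conservation property of Proposition \ref{propcm}, combined with the $L^1_\rho$-contraction and the construction of the minimal solution carried out in the proof of Theorem \ref{teu}.

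First, I would pick the same approximating sequence $\{u_{0n}\}\subset L^1_\rho(\Rr^N)\cap L^\infty(\Rr^N)$ used in the proof of Theorem \ref{teu}, with $u_{0n}\ge 0$ and $u_{0n}\to u_0$ in $L^1_\rho(\Rr^N)$ as $n\to\infty$. For each $n$, let $u_n$ denote the minimal bounded solution to \eqref{ea30} with initial datum $u_{0n}$ provided by Theorem \ref{02011}. Since assumption \eqref{A0} is satisfied by $u_{0n}$ (because $u_{0n}$ is bounded) and assumption \eqref{A1} holds by hypothesis, Proposition \ref{propcm} applies to $u_n$ and gives
\[
\int_{\Rr^N} u_n(x,t)\,\rho(x)\,\dd x \,=\, \int_{\Rr^N} u_{0n}(x)\,\rho(x)\,\dd x \qquad \textrm{for all } t>0.
\]

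Next, I would pass to the limit $n\to\infty$ on both sides of the above identity. On the right-hand side, the convergence $u_{0n}\to u_0$ in $L^1_\rho(\Rr^N)$ gives directly
\[
\int_{\Rr^N} u_{0n}(x)\,\rho(x)\,\dd x \,\to\, \int_{\Rr^N} u_0(x)\,\rho(x)\,\dd x.
\]
On the left-hand side, the proof of Theorem \ref{teu} shows that $u_n(\cdot,t)\to u(\cdot,t)$ in $L^1_\rho(\Rr^N)$ for every $t>0$, where $u$ is the (unique, by Theorem \ref{tuniu}) solution to \eqref{ea30} with initial datum $u_0$; consequently
\[
\int_{\Rr^N} u_n(x,t)\,\rho(x)\,\dd x \,\to\, \int_{\Rr^N} u(x,t)\,\rho(x)\,\dd x.
\]
Combining these two convergences yields \eqref{ea48}, which is the claim.

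The argument is largely a routine limit procedure; the only slightly delicate point is verifying that Proposition \ref{propcm} is genuinely available at the approximation level, i.e.\ that the approximating solutions $u_n$ are bounded \emph{and} belong to $C([0,\infty);L^1_\rho(\Rr^N))$ so that mass conservation holds for each of them. Both properties are guaranteed by Theorem \ref{02011}, so no real obstacle arises: the uniqueness statement of Theorem \ref{tuniu} together with the $L^1_\rho$-contraction inherited from Proposition \ref{propcontrazu} removes any ambiguity in the choice of limit.
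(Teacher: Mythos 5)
Your proposal is correct and coincides with the paper's own proof: both approximate $u_0$ by bounded data $u_{0n}$, invoke Proposition \ref{propcm} for each minimal solution $u_n$, and pass to the limit using $u_n(\cdot,t)\to u(\cdot,t)$ and $u_{0n}\to u_0$ in $L^1_\rho(\Rr^N)$. No gaps.
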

\begin{proof}
The unique solution $u$ is obtained as described in the proof of Theorem \ref{teu}. By Proposition \ref{propcm}, for every $n\in \mathbb N$,
\[\int_{\Rr^N} u_n(x,t)\rho(x) \dd x = \int_{\Rr^N} u_{0n}(x) \rho(x) \dd x\,.\]
Letting $n\to\infty$, since $u_n(\cdot, t)\to u(\cdot, t) \in L^1_{\rho}(\Rr^N)$ for any $t>0$ and $u_{0n}\to u_0$ in $L^1_\rho$ as $n\to\infty$, the thesis follows.
\end{proof}

\subsection{Fast decaying density}
\label{fastu}

\begin{theorem}\label{texiu}
Let $N\geq 2$, $\tau>0, \sigma=1$. Let assumptions \eqref{A01}, \eqref{A2} be satisfied. Then there exists a solution $u$ to problem \eqref{ea30} such that condition \eqref{ea21} is satisfied. More precisely, inequalities \eqref{ea63} and \eqref{ea64} with $\sigma=1$ hold true. 
\end{theorem}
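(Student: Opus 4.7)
The plan is to reduce to Theorem \ref{texi2} by approximation, using the smoothing estimate of Theorem \ref{tse} to compensate for the fact that $u_0$ need no longer be bounded. Specifically, I would pick $\{u_{0n}\}\subset L^{\infty}(\Rr^N)\cap L^+_\rho(\Rr^N)$ with $u_{0n}\to u_0$ in $L^1_\rho(\Rr^N)$ and $\|u_{0n}\|_{L^1_\rho}\le 2\|u_0\|_{L^1_\rho}$. For each $n$, let $u_n$ be the minimal solution to \eqref{ea30} with initial datum $u_{0n}$ provided by Theorem \ref{02011}. By the construction in the proof of Theorem \ref{teu}, $u_n\to u$ in $C\big([0,\infty);L^1_\rho(\Rr^N)\big)$, where $u$ is the minimal solution to \eqref{ea30} with initial datum $u_0$.

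Since $u_{0n}\in L^{\infty}\cap L^+_\rho$, Theorem \ref{texi2} applies to $u_n$ and, by tracing through its proof, supplies the Riesz-type representation
\[
U_n(x,t):=\int_\tau^t u_n^m(x,s)\,\dd s \;=\; \int_{\Rr^N}\frac{\rho(y)\,[u_n(y,\tau)-u_n(y,t)]}{|x-y|^{N-1}}\,\dd y\,.
\]
Because $\rho\in L^\infty(\Rr^N)$ is part of assumption \eqref{A01}, the smoothing estimate Theorem \ref{tse} gives, for every $s\geq \tau$ and every $n\in\Nn$,
\[
\|u_n(\cdot,s)\|_{\infty}\;\leq\; C\, s^{-\theta}\|u_{0n}\|_{L^1_\rho}^{\theta/N}\;\leq\; M_\tau\,,
\]
where $M_\tau$ depends only on $\tau$, $N$, $m$ and $\|u_0\|_{L^1_\rho}$. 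Consequently $|u_n(y,\tau)-u_n(y,t)|\leq 2M_\tau$ uniformly in $n,y$ and $t>\tau$, whence
\[
U_n(x,t)\;\leq\; 2M_\tau\int_{\Rr^N}\frac{\rho(y)}{|x-y|^{N-1}}\,\dd y,
\]
which, by Corollary \ref{corR} applied with $\sigma=1$, is bounded by $\tilde C|x|^{1-\nu-2/r}$ if $N=2$ (for $\nu,r$ as in \eqref{ea54}) and by $\tilde C|x|^{1-N/r}$ if $N\geq 3$ (for $r$ as in \eqref{ea53}), uniformly in $n$ and in $t>\tau$.

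To conclude, I pass to the limit $n\to\infty$. The $L^1_\rho$ convergence $u_n(\cdot,s)\to u(\cdot,s)$, uniform in $s$ on compact intervals, yields (along a subsequence) pointwise a.e. convergence on $\Rr^N\times(\tau,\infty)$; coupled with the uniform bound $u_n\le M_\tau$ on $\Rr^N\times[\tau,\infty)$, dominated convergence in the $s$-variable gives $U_n(x,t)\to U(x,t):=\int_\tau^t u^m(x,s)\,\dd s$ for a.e. $x$ and every $t>\tau$. Passing to the limit in the preceding inequality then yields \eqref{ea63} and \eqref{ea64} with $\sigma=1$ (with $2M_\tau$ in place of $2\|u_0\|_\infty$), and hence \eqref{ea21}. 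The main obstacle is precisely the uniform-in-$n$ control of $U_n$: a direct application of Theorem \ref{texi2} to $u_n$ produces a prefactor $\|u_{0n}\|_\infty$, which diverges as $n\to\infty$; the smoothing estimate is the crucial new ingredient that replaces this prefactor by a constant depending only on $\tau$ and $\|u_0\|_{L^1_\rho}$, and this is where the hypothesis $\rho\in L^\infty$ of \eqref{A01} enters in an essential way.
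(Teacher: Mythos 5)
Your proposal is correct and follows essentially the same route as the paper: approximate by minimal solutions with bounded data, use the representation of $U_n$ via the Riesz kernel from the proof of Theorem \ref{texi2}, invoke the smoothing estimate \eqref{ea32} to obtain an $n$-independent $L^\infty$ bound on $u_n$ for $t\geq\tau$, and pass to the limit before applying Corollary \ref{corR}. Your identification of the smoothing estimate as the ingredient replacing the $\|u_{0n}\|_\infty$ prefactor is exactly the point of the paper's argument.
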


\begin{proof}
For any $n\in \mathbb N$, let $u_n$ be the minimal solution constructed in the proof of Theorem \ref{teu}. For any $n\in\mathbb N$, define $U_n(x,t):=\int_{\tau}^t G(u_n(x,s))ds,\,\,x\in \Rr^N, t>\tau.$
Repeating the proof of Theorem \ref{texi2}, we get for any $n\in \mathbb N$
\begin{multline*}
0\leq U_n(x,t)\leq\int_{\Rr^N}\left|K(x,y) \rho(y)\big[u_n(y,\tau)-u_n(y,t) \big]\dd y  \right|\\
 \textrm{for all } x\in\Rr^N, t>\tau.
 \end{multline*}
By \eqref{ea32}, there exists $C>0$ such that $\|u_n\|_{L^\infty(\Rr^N\times (\tau,\infty))}\leq C$ uniformly with respect to $n$. So, 
\[ 
0\leq U_n(x,t) \leq 2C\int_{\Rr^N} K(x,y)\rho(y) \dd y \quad \textrm{for all }x\in\Rr^N, \, t>\tau, \, n\in \mathbb N. 
\]
Sending $n\to \infty$, this yields
\[0\leq U(x,t) \leq 2C\int_{\Rr^N} K(x,y)\rho(y) \dd y \quad \textrm{for all }x\in\Rr^N, \, t>\tau.
\]
Hence the conclusion follows as well as in the proof of Theorem \ref{texi2}.  
\end{proof}

\begin{theorem}\label{tunc2}
Let $N\geq 2, \sigma=1$. Let assumptions \eqref{A01}, \eqref{A2b} be satisfied. Let $\tilde
u$ be the minimal solution to problem \eqref{ea30}, let $u$ be
any solution to problem \eqref{ea30} such that 
\eqref{ea65} is satisfied with $\tau>0$ and $\sigma=1$. Then $u\equiv\tilde 
u$.
\end{theorem}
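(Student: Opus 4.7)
The plan is to combine the smoothing estimate of Theorem~\ref{tse} with the uniqueness result Theorem~\ref{tunc} through a restart argument, mirroring the reduction of Theorem~\ref{tuniu} to Theorem~\ref{tuni}.

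First, since $\sigma = 1$ and $\rho \in L^\infty(\Rr^N)$, Theorem~\ref{tse} applied to both $u$ and $\tilde u$ gives $u, \tilde u \in L^\infty(\Rr^N \times [\tau, \infty))$, where $\tau > 0$ is the parameter from hypothesis \eqref{ea65}. Consider the shifted functions $v(\cdot, s) := u(\cdot, s+\tau)$ and $\tilde v(\cdot, s) := \tilde u(\cdot, s+\tau)$ for $s \geq 0$. These are bounded solutions of \eqref{ea30} with initial data $u(\cdot, \tau), \tilde u(\cdot, \tau) \in L^\infty(\Rr^N) \cap L^+_\rho(\Rr^N)$. The change of variable $t = s+\tau$ transforms the hypothesis \eqref{ea65} on $u$ into the condition \eqref{ea65} with $\tau = 0$ for $v$; moreover, Theorem~\ref{texi3} applied to the minimal $\tilde u$ furnishes the analogous decay for $\tilde v$.

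Next, let $w$ and $\tilde w$ denote the minimal solutions of \eqref{ea30} with initial data $u(\cdot, \tau)$ and $\tilde u(\cdot, \tau)$ respectively (provided by Theorem~\ref{02011}). Applying Theorem~\ref{tunc} to the pair $(v, w)$---whose common initial datum is $u(\cdot, \tau)$, with $w$ minimal and $v$ satisfying the required decay---yields $v = w$; the same argument applied to $(\tilde v, \tilde w)$ gives $\tilde v = \tilde w$. The $L^1_\rho$-contraction of Proposition~\ref{propcontrazu} then produces
\begin{equation*}
\|u(\cdot, t) - \tilde u(\cdot, t)\|_{L^1_\rho} \;\leq\; \|u(\cdot, \tau) - \tilde u(\cdot, \tau)\|_{L^1_\rho} \qquad \text{for all } t \geq \tau.
\end{equation*}

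The main obstacle is to show that the right-hand side above vanishes. The plan is to argue by contradiction in the spirit of Theorem~\ref{tuniu}: if $\|u(\cdot, T) - \tilde u(\cdot, T)\|_{L^1_\rho} > \epsilon$ for some $T > \tau$ and $\epsilon > 0$, a complementary computation---replaying the proof of Theorem~\ref{tunc} directly on $[\tau, \infty)$, where both smoothing-induced boundedness and the decay hypothesis are in force---shows that the function $t \mapsto \int_{\Rr^N} \rho[u(t) - \tilde u(t)]\, \dd x$ is constant on $[\tau, \infty)$. The $L^1_\rho$-continuity of $u$ and $\tilde u$ at $t = 0$ with common initial datum $u_0$ then forces this constant to vanish, contradicting the assumption and yielding $u \equiv \tilde u$.
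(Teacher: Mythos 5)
Your steps 1--4 are essentially the paper's argument: use the smoothing estimate of Theorem~\ref{tse} to make the data at time $\tau$ bounded, restart the problem at $t=\tau$, invoke Theorem~\ref{texi3} and Theorem~\ref{tunc} to identify $u$ and $\tilde u$ on $[\tau,\infty)$ with the minimal solutions of the restarted problems, and then apply the $L^1_\rho$-contraction to get $\|u(\cdot,t)-\tilde u(\cdot,t)\|_{L^1_\rho}\le\|u(\cdot,\tau)-\tilde u(\cdot,\tau)\|_{L^1_\rho}$ for $t\ge\tau$. Up to that point you are on track.

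The gap is in step 5, which is precisely where the common initial datum has to enter, and your mechanism for making it enter does not work. Knowing that $t\mapsto\int_{\Rr^N}\rho\,[u(\cdot,t)-\tilde u(\cdot,t)]\,\dd x$ is constant on $[\tau,\infty)$ for one \emph{fixed} $\tau>0$ says nothing about the value of that constant: the decay hypothesis \eqref{ea65} is only assumed from time $\tau$ on, so the test-function computation of Theorem~\ref{tunc} cannot be run on $(0,\tau)$, and there is no way to propagate the constancy down to $t=0$ where continuity and the common datum would force the value to be zero. (A function can equal $5$ on $[\tau,\infty)$ and still tend to $0$ as $t\to0^+$.) Moreover, even granting $\int\rho(u-\tilde u)\,\dd x=0$, you should say explicitly that this gives $u=\tilde u$ only because $u\ge\tilde u$ by minimality of $\tilde u$. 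The paper bridges $(0,\tau)$ differently, by pure $L^1_\rho$-continuity at $t=0$: arguing by contradiction, if $\|u(\cdot,T)-\tilde u(\cdot,T)\|_{L^1_\rho}>\epsilon$ for some $T>0$, one chooses $\tau\in(0,T)$ so small that $\|u(\cdot,\tau)-\tilde u(\cdot,\tau)\|_{L^1_\rho}<\epsilon/2$ (both solutions converge to $u_0$ in $L^1_\rho$ as $t\to0^+$), runs your steps 1--4 from that $\tau$, and contracts forward to time $T$ to reach a contradiction. Equivalently, one lets $\tau\to0^+$ in your displayed inequality. Either way the restart time must be taken arbitrarily small, so the hypothesis ``\eqref{ea65} with $\tau>0$'' has to be read as holding for every small $\tau$, as the paper implicitly does; with that reading, replacing your step 5 by the limiting/contradiction argument above repairs the proof.
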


\begin{proof} Suppose, by contradiction, that $u\neq \hat u$. Then, for some $T>0$ and $\epsilon>0$, there holds
\eqref{ea45}. Since $u, \hat u\in C\big([0,\infty); L^1_\rho(\Rr^N)\big)$, we can select $0<\tau<T$ such that  \eqref{ea46} is verified.
Let $v$ be the minimal solution to problem 
\begin{equation}\label{ea80}
    \begin{cases}
   \rho\, \partial_t v + (- \Delta)^{\frac{1}{2}}\left[  v^m\right] = 0  & x\in \Rr^N, \, t>0\\
     v= u(x,\tau) & x\in \Rr^N, \, t=\tau\,,
    \end{cases}
\end{equation}
and $\hat v$ the minimal solution to problem 
\begin{equation}\label{ea81}
    \begin{cases}
   \rho\, \partial_t \hat v + (- \Delta)^{\frac{1}{2}}\left[ \hat v^m\right] = 0  & x\in \Rr^N, \, t>0\\
     \hat v= \hat u(x,\tau) & x\in \Rr^N, \, t=\tau\,;
    \end{cases}
\end{equation}
Such solutions $v,\hat v$ are provided by Theorem \ref{texi3},  hence \eqref{ea65} holds true with $U$ replaced by $\int_{\tau}^t G\big(v(x,s) \big)\dd s$ or 
$\int_{\tau}^t G\big(\hat v(x,s) \big)\dd s$, with $\sigma=1$. Hence, by Theorem \ref{tunc}, $v=u$, $\hat v=\hat u$ (note that in Theorem \ref{tunc} we had $\tau=0$, however now we can apply it for $\tau>0$, since in problems \eqref{ea80},\eqref{ea81} initial conditions are given for $t=\tau>0$).
 
By Proposition \ref{propcontraz} and \eqref{ea46}, for all $t>\tau$,
\[
\| \hat u(\cdot, t)-u (\cdot, t)\|_{L^1_\rho}\leq \|\hat u(\cdot, \tau)-u(\cdot, \tau) \|_{L^1_\rho} <\frac{\epsilon}2.
\]
If we choose $t=T$, \eqref{ea47} is in contradiction with \eqref{ea45}. Hence $u\equiv \hat u$. The proof is completed. 
\end{proof}

\begin{remark}
Note that arguments in Subsections \ref{exiu}-\ref{fastu} only use $\sigma=1$ and $\rho\in L^\infty(\Rr^N)$ to apply \eqref{ea32}.  Hence all results in these Subsections remain true for $0<\sigma<2$ and general $\rho\in C(\Rr^N)$, provided a smoothing estimate like \eqref{ea32} can be established for these values of the parameter $\sigma$ and for such a density $\rho$.
\end{remark}

\bibliographystyle{plain}
\addcontentsline{toc}{section}{References}

\end{document}